\UseRawInputEncoding
\documentclass[11pt, reqno]{amsart}



\usepackage[utf8]{inputenc}
\usepackage[T1]{fontenc}
\usepackage[english]{babel}
\usepackage{lmodern}

\usepackage{setspace}
\usepackage{lipsum}

\setstretch{1.2}

\usepackage{eurosym}

\usepackage{enumitem}
\usepackage{graphicx}
\allowdisplaybreaks[4]
\usepackage{float}
\usepackage{amsmath}
\usepackage{mathtools}
\usepackage{amssymb}
\usepackage{amsthm}
\usepackage{bbm}

\newcommand{\N}{\mathbb{N}}

\newcommand{\R}{\mathbb{R}}

\theoremstyle{plain}
\newtheorem{theorem}{Theorem}
\newtheorem{proposition}[theorem]{Proposition}
\newtheorem{corollary}[theorem]{Corollary}
\newtheorem{lemma}[theorem]{Lemma}

\newtheorem{assumption}{Assumption}

\theoremstyle{definition}

\newtheorem{remark}[theorem]{Remark}


\usepackage{color}
\usepackage[square,sort,comma,numbers]{natbib}

\usepackage[a4paper,bindingoffset=0.2in,%
            left=0.5in,right=0.5in,top=1in,bottom=1in,%
            footskip=.25in ]{geometry}





\usepackage{fancyhdr}

\usepackage{ifthen}

\setlength{\headheight}{28pt}
\pagestyle{fancy}
\lhead{ }

\fancyheadoffset{\textwidth}
\fancyhf{}
\fancyfoot[C]{\thepage }
\fancyhead[C]{\leftmark}


\usepackage{blindtext}
\usepackage[pdftex,pdfpagelabels=true]{hyperref}

%

\hypersetup{
    colorlinks=true,
    linkcolor=blue,
    filecolor=magenta,
    urlcolor=cyan,
}

\usepackage[title]{appendix}



\begin{document}

\title[]{Rigorous Derivation of the Degenerate Keller-Segel System  from a Moderately Interacting  Stochastic  Particle System.\\
Part II Propagation of Chaos}
\date{\today}
\author{Li Chen}
\address{School of Business Informatics and Mathematics, Universität Mannheim, 68131, Mannheim, Germany}
\email{li.chen@uni-mannheim.de}

\author{Veniamin Gvozdik}
\address{School of Business Informatics and Mathematics, Universität Mannheim, 68131, Mannheim, Germany}
\email{veniamin.gvozdik@uni-mannheim.de}

\author{Yue Li}
\address{School of mathematics and Statistics, Shandong Normal University, 250358, Jinan, P.R. China}
\email{liyue2011008@163.com}

\begin{abstract}

This work is a series of two articles. The main goal  is to  rigorously derive  the degenerate  Keller-Segel system (including the case with fractional Laplacian in the equation for chemical potential) in the sub-critical regime from a moderately interacting stochastic particle system.  In the first  article \cite{gvozdik2022partone},  we establish the classical solution theory of the degenerate parabolic-elliptic  Keller-Segel system  and its non-local version. In the second article, which is the current one, we derive a propagation of chaos result, where the classical solution theory obtained in the first article is used  to derive required estimates for the particle systems. Due to the degeneracy of the  non-linear diffusion and the singular aggregation effect in the system,  we perform  an  approximation of  the stochastic  particle system by using a cut-off interacting potential. An additional linear diffusion on the particle level is used as a parabolic regularization of the system. We present the propagation of chaos result with  two different types of cut-off scaling for the aggregation potential, namely  logarithmic and  algebraic scaling. For the logarithmic scaling the convergence of  trajectories is obtained in expectation, while for the algebraic scaling the convergence in the sense of probability is derived. The result with algebraic scaling is deduced by studying  the  dynamics of a carefully constructed  stopped process and applying a generalized version of  the law of large numbers. Consequently, the propagation of chaos in the weak sense follows directly from these convergence results and the vanishing viscosity argument of the Keller-Segel system. Using the relative entropy method and additional $L^q$ estimate for the $l$-th marginal of the $N$ particle distribution, we further derive quantitative propagation of chaos results in $L^q$ $(1\leq q<\infty)$ norm by interpolation inequality. 

\end{abstract}
\vskip 0.2in

\keywords{Moderately interacting particle systems,  Mean-field limit, Degenerate Keller–Segel model,  Propagation of chaos}
\subjclass[2010]{35Q92, 35K45, 60J70, 82C22.}
\maketitle


\section{Introduction}

Interacting particle systems of mean-field type belong to the well-known and fast growing fields in applied analysis. They are used to construct microscopic models with different physical and biological backgrounds, such as  dynamics of ions in plasma, distribution of bacteria in a fluid or flocking behavior of animals. The corresponding mean-field limit is usually given by a non-local partial differential equation.
The main goal of this series of two papers is to  rigorously derive the degenerate Keller-Segel system on the whole space $\R^d$ for $d\geq 3$ in the sub-critical regime from a moderately interacting stochastic particle system.
The so-called Keller-Segel model, which describes the collective motion of cells which are attracted or repelled by a chemical substance and are able to emit it, has been introduced by \citep{keller1970initiation,keller1971model,patlak1953random}, which nowadays has many different modifications. There is a huge amount of valuable works in the analysis of such systems. We focus only on the following degenerate diffusion-aggregation version:
\begin{align}
\label{Keller_Segel_classical}
\begin{cases}
\partial_t u = - \nabla \cdot (u \nabla c) + \nabla \cdot (u\nabla p(u)),    \\
c= \Phi_{\vartheta} * u(t,x), \quad \nabla\Phi_{\vartheta}=C_{d,\vartheta}\frac{ x}{|x|^{\vartheta}}, \, \frac{d}{2}+1\leq \vartheta\leq d,
\\
u(0,x)=u_0(x), \ \ \ \ x\in \R^d, t>0,
\end{cases}
\end{align}
where $u$ represents the density of the cells, $c$ denotes the self-produced chemical potential, the nonlinear diffusion is given by $p(u)=\frac{m}{m-1}u^{m-1}$ for $m>1$, and $C_{d,\vartheta}$ is a constant depends on $\vartheta$ and $d$. In case $\vartheta=d$, the second equation is exactly $-\Delta c=u$. The porous media type nonlinear diffusion in  \eqref{Keller_Segel_classical} has been introduced in order to balance the aggregation effect in higher dimensions. Rigorous derivation of \eqref{Keller_Segel_classical} includes the analysis of Keller-Segel systems (the degenerate and the non-local versions) and  mean-field theory of stochastic interacting particle systems. In our first article \citep{gvozdik2022partone}, the well-posedness of the degenerate parabolic-elliptic  Keller-Segel system  as well as  its non-local version are established. It is a self-contained result focused on analysis of the partial differential equations, which will be cited directly in the current paper. In the rest of this paper we will present the result on propagation of chaos.

The idea of the mean-field theory and propagation of chaos comes initially from statistical physics in studying of the Vlasov equation. For the classical results related to this topic we refer to review papers \citep{golse2016dynamics,jabin2014review}.
For reviews of the mean-field theory of the stochastic interacting particle systems we refer to \citep{jabin2017mean,sznitman1991topics}. The classical method of the mean-field theory, especially for smooth interactions,  is based on the direct trajectory  estimates. However, when the interaction potential is singular, the classical solution theory of the corresponding particle system could be enhanced by additional assumptions on initial data or cut-off potentials. In case that the interaction potential is of Coulomb type, Lazarovici and Pickl in \citep{lazarovici2017mean} considered a particle system with cut-off potential and random initial data, and derived the convergence of particle trajectories in the sense of probability. This implies propagation of chaos directly. Further results can be found in \citep{boers2016mean,bolley2011stochastic,chen2017mean,chen2020combined,garcia2017microscopic,godinho2015propagation,lazarovici2017mean} with different cut-off techniques for different models.
Oelschl\"ager studied  moderately interacting particle systems in \citep{oelschlager1990large} in order to derive porous medium type equations.  It was later extended by Philipowski in \citep{philipowski2007interacting} with the logarithmic scaling to obtain the porous medium equation. This approach has been further generalized by \citep{chen2019rigorous}  to derive a cross-diffusion system, in \citep{chen2021rigorous} to obtain the SKT system where the moderate interaction appeared in the diffusion coefficient and in  \citep{chen2021analysis} to derive a non-local porous medium equation.
For more information about interacting particle systems with singular potentials as well as some generalizations see  \citep{jabin2017mean} and references therein. A direct study of the strong solution to the particle system with multi-component $L^p$ potential and the mean field limit result is recently given in \cite{hao2022strong}.
Recently, the strong convergence for propagation of chaos has been obtained through relative entropy (or modulated energy) method for singular potentials without cut-off, for example in \citep{CHH,serfaty2017mean,serfaty2020mean,jabin2018quantitative,bresch2019mean}.

We use $V^{\varepsilon} (x) := \frac{1}{\varepsilon^d } V(x / \varepsilon)$ with $\varepsilon>0$ as a mollification kernel in the rest of the paper, where $V\geq 0$ is a given radially symmetric smooth function such that $\int _{\R^d}V(x) dx =1$.
We use the following smoothed potential for the particle systems:
\begin{align}\label{smoothPhi}
	\Phi_\vartheta^{\varepsilon} := \begin{cases}
		\Phi_\vartheta *V^{\varepsilon}, \ &\frac{d}{2}+1\leq\vartheta<d,  \\
		\bar\Phi_d*V^{\varepsilon}, \ &\vartheta=d,
	\end{cases}
\quad\mbox{where}\quad
\bar\Phi_d (|x|) := \begin{cases}
\Phi_d(|x|), \ &|x|\geq \varepsilon,  \\
\Phi_d(\varepsilon), \ &|x|< \varepsilon.
\end{cases}
\end{align}
Here, we use an additional cut-off $\bar\Phi_d$ for the Coulomb potential case in order to easily handle the singularity. Actually, we can also use the cut-off given in \cite{lazarovici2017mean} directly. We decide to take this form because it fits better in our framework so that it has a parallel structure to the mollification of the nonlinear diffusion.
The non-linear function $p(u)$ in the diffusion is approximated by $p_{\lambda} \in C^3(\R_*)$ with
\begin{align}
\label{p_lambda_definition}
p_{\lambda}(r)  := \begin{cases}
p(2/\lambda), \ &\text{if} \ r>2/ \lambda,  \\
p(r), \ & \text{if} \ 0\leq r< 1/ \lambda.
\end{cases}
\end{align}
Let $(\Omega, \mathcal{F}, (\mathcal{F}_{t\geq 0}) , \mathbb{P})$ be a complete filtered probability space, $N\in \N$ and  $\{ (B_t^i)_{t\geq 0} \}_{i=1}^N$ be $d$-dimensional independent $\mathcal{F}_t$-Brownian motions. For $\sigma>0$, $\sqrt{2\sigma }  \,dB_t^i$  will be used as a parabolic regularization of the particle system. Furthermore, let $\{\zeta_i\}_{i=1}^N$ be independent identically distributed (i.i.d.) random variables, which are independent of $\{ (B_t^i)_{t\geq 0} \}_{i=1}^N$ with a common density function $u_0^\sigma$. This $u_0^\sigma$, which appears only in the parabolic regularized system, represents the approximation of $u_0\in L^1(\mathbb{R}^d)\cap L^\infty(\mathbb{R}^d)$ in \eqref{Keller_Segel_classical}. More precisely,
$(u_0^\sigma)_\sigma\subset C_0^\infty(\R^d)$ and
\begin{eqnarray*}
u_0^\sigma\rightarrow u_0 \mbox{ in } L^l(\R^d), \forall l\in [1,\infty) \mbox{ as } \sigma\rightarrow 0, \mbox{ and } \|u_0^\sigma\|_{L^q(\R^d)}\leq \|u_0\|_{L^q(\R^d)}, \forall q\in [1,\infty].
\end{eqnarray*}
The minimal assumption for initial datum $u_0\in L^1(\mathbb{R}^d)\cap L^\infty(\mathbb{R}^d)$ in \eqref{Keller_Segel_classical} has been used in \cite{gvozdik2022partone} to obtain solution theory for the corresponding partial differential equations. Since the mean-field limit result in this paper is obtained under the assumption of the existence of these solutions, this condition for initial data will not be used directly in the current paper.

System \eqref{Keller_Segel_classical} contains an aggregation and a non-linear diffusion, so we introduce $\varepsilon_{k}>0$ and $\varepsilon_p>0$ to be regularization parameters for both terms respectively. Therefore we propose the following regularized interacting particle system as the approximation of \eqref{Keller_Segel_classical}:
\begin{align}
\label{generalized_regularized_particle_model}
\begin{cases}
dX_t^{N,i, \varepsilon, \sigma} \!\!= \displaystyle\frac{1}{N} \displaystyle\sum_{j =1}^N \nabla \Phi_\vartheta^{\varepsilon_k}(X_t^{N,i, \varepsilon, \sigma} \!\! -X_t^{N,j, \varepsilon, \sigma}) dt - \nabla p_{\lambda} \Big(\displaystyle \frac{1}{N}\displaystyle \sum_{j=1}^N  V^{\varepsilon_p}(X_t^{N,i, \varepsilon, \sigma} \!\! -X_t^{N,j, \varepsilon, \sigma} ) \Big) dt + \sqrt{2\sigma }  \,dB_t^i,  \\
X_0^{N,i, \varepsilon, \sigma} =\zeta^i, \qquad 1\leq i\leq N.
\end{cases}\end{align}
System \eqref{generalized_regularized_particle_model} is an interacting stochastic particle system which describes the dynamics of $N$ random processes with i.i.d. initial data. With the notation of empirical measure $\mu_t^{N, \varepsilon, \sigma} := \frac{1}{N} \sum_{i=1}^N \delta_{X_t^{N,i, \varepsilon, \sigma} }$, where $\delta_x$ is the Delta distribution centered at $x$, system \eqref{generalized_regularized_particle_model} can be rewritten into
\begin{equation*}
dX_t^{N,i, \varepsilon, \sigma} = \nabla \Phi_\vartheta^{\varepsilon_k}*\mu_t^{N, \varepsilon, \sigma}(X_t^{N,i, \varepsilon, \sigma})dt - \nabla p_{\lambda} \big( V^{\varepsilon_p}*\mu_t^{N, \varepsilon, \sigma}(X_t^{N,i, \varepsilon, \sigma}) \big) dt + \sqrt{2\sigma }  \,dB_t^i,\quad X_0^{N,i, \varepsilon, \sigma} =\zeta^i.
\end{equation*}
In this setting, the interacting effects include two parts: $\Phi_\vartheta^{\varepsilon_k}$ means the mollified chemotactic attractive potential and $ V^{\varepsilon_p}$ represents the moderated interacting effect appeared in the non-linear degenerate diffusion term $p_\lambda$ for $\lambda,\varepsilon_p\rightarrow 0$.
With the above mollification and cut-offs, the classical theory of stochastic systems implies that system \eqref{generalized_regularized_particle_model} has a unique strong solution for given $\varepsilon_k$, $\varepsilon_p$ and $\lambda$.

As a mean field approximation of \eqref{generalized_regularized_particle_model}, we introduce the so-called intermediate particle system
\begin{align}
\label{generalized_intermediate_particle_model}
\begin{cases}
d\bar{X}_t^{i, \varepsilon, \sigma} = \nabla  \Phi_\vartheta^{\varepsilon_k}  *u^{\varepsilon, \sigma} ( t, \bar{X}_t^{i, \varepsilon, \sigma}) dt - \nabla p_{\lambda} \big(  V^{\varepsilon_p} * u^{\varepsilon, \sigma} (t, \bar{X}_t^{i, \varepsilon, \sigma}  ) \big) dt +\sqrt{2\sigma } dB_t^i ,\\
\bar{X}_0^{i, \varepsilon, \sigma} =\zeta^i,
\end{cases}
\end{align}
where $ u^{\varepsilon, \sigma}(t, x)$ is the probability density function of $\bar{X}_t^{i, \varepsilon, \sigma}$ which  solves the following system of  partial differential equations in the weak sense
\begin{align}
\label{generalized_equation_u_epsilon_sigma}
\begin{cases}\partial_t u^{\varepsilon, \sigma} =  \sigma \Delta u^{\varepsilon, \sigma} -\nabla\cdot ( u^{\varepsilon, \sigma} \nabla c^{\varepsilon, \sigma}) + \nabla \cdot (u^{\varepsilon, \sigma}\nabla p_{\lambda} (V^{\varepsilon_p} *u^{\varepsilon, \sigma} ) ),     \\
c^{\varepsilon, \sigma} = \Phi_\vartheta^{\varepsilon_k}* u^{\varepsilon, \sigma},  \\
u^{\varepsilon, \sigma}(0,x)=u_0^\sigma(x), \ \ \ \ x\in \R^d, t>0.
\end{cases}
\end{align}
If we let $\varepsilon_k,\varepsilon_p,\lambda\rightarrow 0$, the limiting particle system reads
\begin{align}
\label{generalized_particle_model}
\begin{cases} d\hat{X}_t^{i, \sigma} =   \nabla  \Phi _\vartheta *u^{\sigma} ( t, \hat{X}_t^{i, \sigma}) dt- \nabla p  (u^{ \sigma} (t, \hat{X}_t^{i,  \sigma}  ) )dt +\sqrt{2\sigma } dB_t^i,  \\
\hat{X}_0^{i, \sigma} =\zeta^i,
\end{cases}
\end{align}
where $ u^{\sigma}(t, x)$ is the probability density function of $\hat{X}_t^{i, \sigma}$ and  solves the following system of  partial differential equations in the weak sense
\begin{align}
\label{generalized_equation_u_sigma}
\begin{cases}\partial_t u^{ \sigma} = \sigma \Delta u^{\sigma} -\nabla \cdot  ( u^{ \sigma} \nabla c^\sigma) + \nabla  \cdot  (u^{ \sigma} \nabla p (u^{ \sigma}) ),  \\
c^{ \sigma} = \Phi_\vartheta*u^{ \sigma},  \\
u^{ \sigma}(0,x)=u_0^\sigma(x), \ \ \ \ x\in \R^d, t>0.
\end{cases}
\end{align}

The mean-field limit presented in the current paper is self-contained under the appropriate conditions on solutions of the corresponding PDE systems \eqref{generalized_equation_u_epsilon_sigma} and \eqref{generalized_equation_u_sigma}. These assumptions have all been given in \cite{gvozdik2022partone} for $\vartheta=d$ rigorously, while the case of $\frac{d}{2}+1\leq\vartheta<d$ requires the following similar assumptions

\begin{assumption}\label{ass}
 Assume that $u^{\varepsilon,\sigma},u^\sigma\in  L^\infty(0,T;(1+|x|^2)L^1(\R^d))\cap L^\infty(0,T;H^s(\R^d))$, $s>\frac{d}{2}+2$, are the solutions of problems \eqref{generalized_equation_u_epsilon_sigma} and \eqref{generalized_equation_u_sigma} separately, furthermore it holds that $ \|u^{\sigma}-u^{\varepsilon,\sigma}\|_{L^{\infty}(0,T; H^s(\R^d))}\leq C(T)(\varepsilon_k+\varepsilon_p)$.
\end{assumption}

Under these conditions we introduce the main result  in the following
\begin{theorem}
\label{coroll_X_N_and_X_hat}
Assume that $T>0$, $m=2$ or $m\geq 3$, $\sigma\geq 0$, and Assumption \ref{ass} holds, then the problem \eqref{generalized_particle_model} has a unique strong solution $\hat{X}_t^{i,  \sigma}$ with $u^\sigma(t,\cdot)$ as the density of its law, and furthermore it holds
\begin{enumerate}
\item[(a)] In case $\vartheta=d$, $\Phi_d^\varepsilon=\Phi_d*V^\varepsilon$, for $ \varepsilon_k = \big( {\alpha_k\ln N} \big)^{-\frac{1}{d}}
$, $\varepsilon_p = \big( {\alpha_p\ln N} \big)^{-\frac{1}{dm-d+2}}$, and $ \lambda = \frac{\varepsilon_p^d}{2}$ with $ 0 <
	\alpha_k + \alpha_p \ll 1$,
then there exists a constant $C>0$ independent of $N$ such that
\begin{align*}
\underset{t\in [0,T]}{\sup } \underset{i\in\{1, \ldots , N\}}{\max } \mathbb{E} \left[ \left| X_t^{N,i, \varepsilon, \sigma} - \hat{X}_t^{i,  \sigma} \right|^2 \right] \leq  C(\varepsilon_k +\varepsilon_p)^2;
\end{align*}

\item[(b)] In case $\frac{d}{2}+1\leq\vartheta\leq d$, for $\varepsilon_k \geq N^{-\beta_k}$, $\varepsilon_p \geq (\ln N)^{-\frac{1}{2+d(2+|m-3|)}}$, and $ \lambda = \frac{\varepsilon_p^d}{2}$ with $0<\beta_k< \frac{1}{2(\vartheta+1)}$, then there exists a constant $C>0$ independent of $N$ such that
\begin{align*}
&\sup_{t\in[0, T]} \mathbb{P} \Big( \max_{i \in\{1, \ldots , N\} } \left| X_t^{N,i, \varepsilon, \sigma} - \hat{X}_t^{i, \sigma} \right|
>\varepsilon_p^\frac{1}{2}  \Big) \leq C \varepsilon_p ^\frac12.
\end{align*}
\end{enumerate}
\end{theorem}

\begin{remark}
	The result in Theorem \ref{coroll_X_N_and_X_hat} (a) is also true for $\frac{d}{2}+1\leq\vartheta<d$ with similar arguments. We omit the proof details. If we compare the results in (b) to (a), there is an improvement of both smoothing parameters $\varepsilon_{k}$ and $\varepsilon_p$. On the one hand, the interaction mollification radius $\varepsilon_{k}$ is reduced to algebraic scaling. On the other hand, the mollification radius for the  nonlinear diffusion, $\varepsilon_p$, the smallness of $\alpha_p$ is not necessary any more. 
\end{remark}


\begin{corollary}[Propagation of chaos in the weak sense]
\label{propagation_of_chaos}
Let $l\in \mathbb{N}$ and consider a $l$-tuple $(X_t^{N,1, \varepsilon, \sigma}, \ldots , X_t^{N,l, \varepsilon, \sigma} )$. We  denote by  $P^l_{N, \varepsilon, \sigma}(t)$  the joint distribution of   $(X_t^{N,1, \varepsilon, \sigma}, \ldots , X_t^{N,l, \varepsilon, \sigma} )$.   Then it holds that
\begin{align*}
  P^l_{N, \varepsilon, \sigma}(t) \ \text{converges weakly to} \ P_\sigma^{\otimes l}(t)
\end{align*}
as $N \rightarrow \infty$; $\varepsilon_k, \varepsilon_p, \lambda \rightarrow 0$ in the sense of Theorem \ref{coroll_X_N_and_X_hat} (a) or \ref{coroll_X_N_and_X_hat} (b), where $P_\sigma(t)$ is a measure which is absolutely continuous with respect to the Lebesgue measure and has a probability density function $u^\sigma(t,x)$ which solves \eqref{generalized_equation_u_sigma} in the weak sense.
Furthermore, there is a subsequence of $(u^\sigma)$ which converges weakly to $u$, the weak solution of \eqref{Keller_Segel_classical}.
\end{corollary}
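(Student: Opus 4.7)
The plan is to deduce Corollary \ref{propagation_of_chaos} directly from Theorem \ref{coroll_X_N_and_X_hat} together with the vanishing viscosity result of \cite{gvozdik2022partone}, without any further PDE analysis. First I would observe that the limiting particles $\hat{X}^{i,\sigma}_t$ defined by \eqref{generalized_particle_model} form an i.i.d.\ system: the initial data $\zeta^i$ are i.i.d., the Brownian motions $B^i$ are independent, and the drift coefficients depend only on the \emph{deterministic} density $u^\sigma$, which is common to every index $i$. Since Assumption \ref{ass} gives $u^\sigma\in L^\infty(0,T;W^{2,\infty}(\R^d))$, the drift is globally Lipschitz in $x$, so pathwise uniqueness holds and each $\hat{X}^{i,\sigma}_\cdot$ is a measurable functional of $(\zeta^i,B^i)$. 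Independence of these inputs then transfers to the solutions, so the joint law of the $k$-tuple $(\hat{X}^{1,\sigma}_t,\ldots,\hat{X}^{k,\sigma}_t)$ is exactly $P_\sigma^{\otimes k}(t)$.

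The second step is to upgrade the $L^2$-estimate of Theorem \ref{coroll_X_N_and_X_hat} to weak convergence of the joint laws. Under the scaling $\varepsilon_k=(\alpha_k\ln N)^{-1/d}$, $\varepsilon_p=(\alpha_p\ln N)^{-1/(dm-d+2)}$, $\lambda=\varepsilon_p^d/2$, we have $\varepsilon_k+\varepsilon_p\to 0$ as $N\to\infty$, hence
\[
\mathbb{E}\bigl[|X^{N,i,\varepsilon,\sigma}_t-\hat{X}^{i,\sigma}_t|^2\bigr]\le C(\varepsilon_k+\varepsilon_p)^2\longrightarrow 0
\]
uniformly in $i$ and $t\in[0,T]$. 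By Markov's inequality this yields $X^{N,i,\varepsilon,\sigma}_t\to \hat{X}^{i,\sigma}_t$ in probability for every fixed $i\in\{1,\ldots,k\}$. Since convergence in probability of finitely many coordinates implies joint convergence in probability, and \emph{a fortiori} convergence in distribution, the $k$-tuple $(X^{N,1,\varepsilon,\sigma}_t,\ldots,X^{N,k,\varepsilon,\sigma}_t)$ converges in law to $(\hat{X}^{1,\sigma}_t,\ldots,\hat{X}^{k,\sigma}_t)$. Combined with the previous step, this gives the weak convergence $P^k_{N,\varepsilon,\sigma}(t)\to P_\sigma^{\otimes k}(t)$ claimed in the corollary.

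For the final assertion I would invoke the vanishing viscosity analysis from \cite{gvozdik2022partone}: the family $(u^\sigma)_\sigma$ of weak solutions to \eqref{generalized_equation_u_sigma} admits a subsequence converging weakly to a weak solution $u$ of \eqref{Keller_Segel_classical} as $\sigma\to 0$, which is precisely the statement required. I do not anticipate any serious obstacle here; the heavy analytic work is already contained in Theorem \ref{coroll_X_N_and_X_hat} and in the companion paper. The only point requiring mild care is the i.i.d.\ property of $\hat{X}^{i,\sigma}$, which hinges on pathwise uniqueness of the (decoupled, non-McKean--Vlasov) SDE \eqref{generalized_particle_model} once $u^\sigma$ is viewed as a given Lipschitz-in-space function.
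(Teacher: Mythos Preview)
Your proposal is correct and matches the paper's approach: the paper does not give a separate detailed proof of Corollary~\ref{propagation_of_chaos} but states that it ``follows directly from convergence in the sense of expectation and the vanishing viscosity argument of the Keller--Segel system,'' i.e., from Theorem~\ref{coroll_X_N_and_X_hat} together with the $\sigma\to 0$ result of \cite{gvozdik2022partone}. Your write-up simply spells out the standard steps (i.i.d.\ structure of $\hat X^{i,\sigma}$ via the decoupled SDE, $L^2$-to-probability-to-law convergence for the $k$-tuple, then citing the vanishing viscosity), which is exactly what the paper intends.
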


\begin{remark}
The condition $\sigma>0$ is only used in \cite{gvozdik2022partone} to obtain the regularity of the PDE solutions in Assumption \ref{ass}. The whole proof of the propagation of chaos works also for the case $\sigma=0$. In other words, if the solution of \eqref{Keller_Segel_classical} satisfies the same regularity as those have been given in Assumption \ref{ass}, one can derive the propagation of chaos result without introducing the parabolic regularity.
\end{remark}

The proof of the main result is going to be shown in three separate parts. Under the Assumption \ref{ass}, the well-posedness of problems \eqref{generalized_intermediate_particle_model} and \eqref{generalized_particle_model} is obtained by using It\^o's formula and duality method. Furthermore, the estimate between $u^\sigma-u^{\varepsilon,\sigma}$ implies directly the estimate for $ \max_{i\in\{1, \ldots , N\}}  |  \bar{X}_t^{i, \varepsilon, \sigma}  - \hat{X}_t^{i, \sigma}  | $. These results are given in Section \ref{intermediate_and_final_model}. The estimates for $ \max_{i \in\{1, \ldots , N\} } | X_t^{N,i, \varepsilon, \sigma} - \bar{X}_t^{i, \varepsilon, \sigma} | $ are given in logarithmic and algebraic scaling separately in Sections \ref{Section_conv_in_exp} and \ref{Section_conv_in_prob}. Hence, the result in Theorem \ref{coroll_X_N_and_X_hat} (a) is a direct consequence of those results from Sections \ref{intermediate_and_final_model} and \ref{Section_conv_in_exp}, while (b) follows from the results of Sections \ref{intermediate_and_final_model} and \ref{Section_conv_in_prob}.

For moderately interacting particle systems one can obtain the convergence of the trajectories in expectation with logarithmic scaling, for example \cite{philipowski2007interacting,chen2019rigorous}. Since the logarithmic scaling is relatively weak to describe the singularity of the interaction potential, more reasonable discussion such as algebraic scaling should be considered. However, due to the singularity of the potential one cannot expect that the trajectories of the regularized  particle system and the particle system of the mean-field type   remain close to each other on any finite time interval.  Nevertheless, one can obtain this convergence in the sense of probability, which is the  spirit of   \citep{boers2016mean}  and \citep{lazarovici2017mean} in deriving Vlasov-Poisson equations.  

One main contribution of this article is the proof of Theorem \ref{coroll_X_N_and_X_hat} (b). By introducing an algebraic cut-off for the interaction potential and a logarithmic cut-off for the degenerate diffusion, we construct a framework to obtain convergence rates in probability. This requires a delicate analysis of estimates involving this mixed scaling. We build up a suitable stopped process and then apply Markov's inequality, so that the problem is reduced to the  estimation of the expectation of the stopped process. Then we  study the time evolution of this expectation. Even for the dynamics of the stopped  process we do not have Lipschitz continuity, hence we  apply  Taylor's expansion at the point which corresponds to  \eqref{generalized_intermediate_particle_model}. Moreover, we  use the regularity of the solutions from the PDE analysis to absorb the singularity caused by the interaction potential. In the whole discussion we use several times a generalized version of  the law of large numbers which is presented in  Lemma \ref{lemma_estim_prob_bad_sets}.

Another main contribution lies in the following quantitative propagation of chaos result in the strong sense, which is obtained by applying interpolation between the relative entropy estimate and the $L^q$ estimate of the $l-$th marginal density. 

\begin{theorem}[Quantitative propagation of chaos result in $L^q(\R^{dl})$-norm]
\label{relativeLq}
Under the assumptions of Theorem  \ref{coroll_X_N_and_X_hat} (b),
let $u^{\varepsilon,\sigma}_{N,l}(t,x_1,\cdots,x_l)$ $(l\in\N)$ be the $l-$th marginal density of the joint density $u^{\varepsilon,\sigma}_N(t,x_1,\cdots,x_N)$ of $(X^{N,i,\varepsilon,\sigma}_{t})_{1\leq i\leq N}$,
then there exists a constant $C>0$ independent of $N$, $\varepsilon_k$ and $\varepsilon_p$ such that
\begin{align*}
&	\|u^{\varepsilon,\sigma}_{N,l}-{u^\sigma}^{\otimes l}\|_{L^\infty(0,T;L^1(\mathbb{R}^{dl}))}\leq C(l)\varepsilon_p^{\frac{4}{d+4}}.
\end{align*}
Furthermore, if $\varepsilon_p\geq (\frac{1}{8\hat{C}T}\ln N)^{-\frac{1}{3(dm-d+1)}}$
and $\varepsilon_k\geq(\frac{1}{8\hat{C}T}\ln N)^{-\frac{1}{2(\vartheta-1)}}$ with $\hat{C}$ being a positive constant independent of $N$, $\varepsilon_k$ and $\varepsilon_p$ (more details can be found in the proof), then we have
\begin{align*}
&\|u^{\varepsilon,\sigma}_{N,l}-{u^\sigma}^{\otimes l}\|_{L^\infty(0,T;L^q(\mathbb{R}^{dl}))}\leq C(l)(\varepsilon_k+\varepsilon_p)^{1-\frac{1}{q}}\quad {\mbox{for any }}1< q<2,\\
	&\|u^{\varepsilon,\sigma}_{N,l}-{u^\sigma}^{\otimes l}\|_{L^\infty(0,T;L^q(\mathbb{R}^{dl}))}\leq C(l)(\varepsilon_k+\varepsilon_p)\quad {\mbox{for any }}2\le q<\infty.
\end{align*}
\end{theorem}
This paper is organized as follows.
Section \ref{intermediate_and_final_model} is devoted to the error estimate for $ \max_{i\in\{1, \ldots , N\}}  |  \bar{X}_t^{i, \varepsilon, \sigma}  - \hat{X}_t^{i, \sigma}  | $. 
The estimates for $ \max_{i \in\{1, \ldots , N\} } | X_t^{N,i, \varepsilon, \sigma} - \bar{X}_t^{i, \varepsilon, \sigma} | $ are given in logarithmic and algebraic scaling in Sections \ref{Section_conv_in_exp} and \ref{Section_conv_in_prob} respectively.
In Sections \ref{propagationstrong}, we derive the quantitative propagation of chaos results in the strong sense.

\vspace{0.5cm}

\section{Particle models  \eqref{generalized_intermediate_particle_model} and \eqref{generalized_particle_model}}
\label{intermediate_and_final_model}

In this section we study the well-posedness of stochastic particle systems  \eqref{generalized_intermediate_particle_model} and \eqref{generalized_particle_model} and give the uniform in $\varepsilon$ estimate of the difference $ \bar{X}_t^{i, \varepsilon, \sigma}  - \hat{X}_t^{i, \sigma}$. Systems  \eqref{generalized_intermediate_particle_model} and \eqref{generalized_particle_model} should be considered together with its corresponding partial differential equations \eqref{generalized_equation_u_epsilon_sigma} and \eqref{generalized_equation_u_sigma}. The proofs for both problems are similar, we only present here the proof for \eqref{generalized_particle_model} and \eqref{generalized_equation_u_sigma}. Then we insert the regular solution of \eqref{generalized_equation_u_sigma} into the particle system \eqref{generalized_particle_model} to obtain a square integrable solution $\hat{X}^{i,\sigma}_t$ from the classical theory of SDEs. In the end, we identify the law of $\hat{X}^{i,\sigma}_t$ with the solutions to \eqref{generalized_equation_u_sigma}, where the whole process is achieved by using the duality method.

\begin{proposition}
	[Solvability of problem \eqref{generalized_particle_model}] Assume that $m=2$ or $m\geq 3$, and $u^\sigma\in L^\infty(0,T;L^1 (\R^d))\cap L^\infty(0,T;H^s(\R^d))$, $s>\frac{d}{2}+2$, is a solution of \eqref{generalized_equation_u_sigma}, then problem \eqref{generalized_particle_model} has a unique square integrable solution $\hat{X}^{i,\sigma}_t$ with $u^\sigma$ as the density of its distribution.
\end{proposition}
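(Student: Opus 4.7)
My plan is to split the argument into two parts, exactly as sketched in the introduction. First, I treat $u^\sigma$ as a fixed datum and view \eqref{generalized_particle_model} as a classical SDE with drift
\begin{equation*}
b(t,x) := \nabla\Phi * u^\sigma(t,x) - \nabla p(u^\sigma(t,x))
\end{equation*}
and constant diffusion coefficient $\sqrt{2\sigma}$. The first step is to verify that $b$ is bounded and globally Lipschitz in $x$, uniformly on $[0,T]$. The second drift equals $-p'(u^\sigma)\nabla u^\sigma = -m(u^\sigma)^{m-2}\nabla u^\sigma$, which is bounded and Lipschitz under the hypothesis $u^\sigma\in L^\infty(0,T;W^{2,\infty}(\R^d))$, provided $r\mapsto r^{m-2}$ is of class $C^1$; this is the only place where the assumption $m=2$ or $m\geq 3$ enters. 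The first drift $\nabla\Phi*u^\sigma$ is bounded because $u^\sigma\in L^1\cap L^\infty$ via standard Riesz-potential estimates, and Lipschitz because one can transfer a derivative onto $u^\sigma$, whose second derivatives are bounded. Classical SDE theory then delivers a unique strong, square-integrable solution $\hat X^{i,\sigma}$ with $\sup_{t\le T}\mathbb{E}|\hat X^{i,\sigma}_t|^2 < \infty$.

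Next, I would identify the law of $\hat X^{i,\sigma}_t$ with $u^\sigma(t,\cdot)$ via the duality method. Let $\nu_t$ denote the law of $\hat X^{i,\sigma}_t$. Applying It\^o's formula to $\varphi(t,\hat X^{i,\sigma}_t)$ for arbitrary $\varphi\in C^{1,2}_c([0,T]\times\R^d)$ and taking expectations shows that $\nu_t$ is a distributional solution of the linear Fokker-Planck problem
\begin{equation*}
\partial_t \nu = \sigma\Delta \nu - \nabla\cdot(b\,\nu),\qquad \nu|_{t=0} = u_0^\sigma(x)\,dx,
\end{equation*}
with the same coefficient $b$ as above. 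But $u^\sigma$, viewed as a measure with density, also solves this very linear Fokker-Planck problem, since plugging the already known $u^\sigma$ into \eqref{generalized_equation_u_sigma} linearises it. Uniqueness is obtained by testing against classical solutions of the dual backward problem $\partial_t \psi + \sigma\Delta\psi + b\cdot\nabla\psi = 0$ with arbitrary smooth terminal data and applying It\^o's formula to convert this into the identity $\int\psi(0)\,d\nu_0 = \int\psi(t)\,d\nu_t$; the same identity holds with $u^\sigma$ in place of $\nu$, so varying the terminal data yields $\nu_t(dx) = u^\sigma(t,x)\,dx$ for every $t\in[0,T]$.

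The main obstacle I anticipate is the dual argument: one needs enough regularity of $b$ to solve the backward dual PDE with arbitrary smooth terminal data and to control the resulting test function well enough to justify the It\^o-based identity up to time $t$. This is precisely why the hypothesis $u^\sigma\in L^\infty(0,T;W^{2,\infty})$ is natural; together with the smoothness of $p$ when $m=2$ or $m\geq 3$, it places $b\in L^\infty(0,T;W^{1,\infty})$ and reduces the dual problem to a classical linear PDE with Lipschitz coefficients, for which a smooth solution with adequate decay is standard.
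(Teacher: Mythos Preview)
Your plan is sound, and the first half (bounded Lipschitz drift, classical SDE well-posedness, It\^o's formula giving the linear Fokker--Planck equation) matches the paper's argument exactly. The duality strategy for identifying the law with $u^\sigma$ is also the paper's, but your execution of the uniqueness step is genuinely different and shorter. The paper writes $w^\sigma$ for the regular PDE solution and $u^\sigma$ for the a~priori only $L^\infty_tL^1_x$ SDE density, and proves uniqueness of the linear Fokker--Planck problem in the nonnegative $L^\infty_tL^1_x$ class. For this it first upgrades the density to $L^q((0,T)\times\R^d)$ for some $q\in(1,\tfrac{d}{d-1})$ by testing against the solution of the backward \emph{heat} equation (not the full dual) with a truncated-power source $g(u^\sigma)=\min((u^\sigma)^{q-1},M)$ and invoking Gagliardo--Nirenberg plus parabolic $W^{2,1}_p$ regularity; only afterwards can it pair the $L^q$ density against the $W^{2,1}_{q'}$ solution of the full backward dual with source $g\in C_0^\infty$.

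Your route bypasses the $L^q$ bootstrap entirely: you solve the \emph{homogeneous} backward dual with smooth terminal data, get $\psi\in C^{1,2}_b$, apply It\^o directly to $\psi(s,\hat X_s^{i,\sigma})$ for the law $\nu_t$, and test the regular $u^\sigma$ against $\psi$ by extending the weak formulation from $C_0^\infty$ to $C_b^{1,2}$ via cutoffs (which only needs $u^\sigma\in L^1$). This works precisely because It\^o's formula never asks for an $L^q$ density, and the other solution is regular by hypothesis. What the paper's longer argument buys is a self-contained uniqueness theorem for the linear problem in the $L^1$ class; your version only identifies one specific measure-valued solution with one specific regular one, which is all the Proposition requires. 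One detail you should spell out: that $\psi\in C^{1,2}_b$ with merely $b\in L^\infty_tW^{1,\infty}_x$ follows, for $\sigma>0$, from $W^{2,1}_p$ parabolic theory for large $p$ plus Sobolev embedding, and for $\sigma=0$ from the Lipschitz flow of the transport equation.
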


\begin{proof}
 Let $w^{\sigma}\in  L^\infty(0,T;L^1 (\R^d))\cap L^\infty(0,T;H^s(\R^d))$, $s>\frac{d}{2}+2$, be a solution of \eqref{generalized_equation_u_sigma}. For $i\in \{1, \ldots ,N\}$ we consider the following stochastic system
	\begin{align}
	\label{X_sigma_w}
	\begin{cases} d\hat{X}_t^{i, \sigma} =   \nabla  \Phi_{\vartheta}  *w^{\sigma} ( t, \hat{X}_t^{i, \sigma})dt - \nabla p  (w^{ \sigma} (t, \hat{X}_t^{i,  \sigma}  ) )dt +\sqrt{2\sigma } dB_t^i,  \\
	\hat{X}_0^{i, \sigma} =\zeta^i.
	\end{cases}
	\end{align}
We will use the following estimates: for $\frac{d}{2}+1\leq\vartheta\leq d$ it holds
\begin{align*}
&\| \nabla \Phi_\vartheta *w^{\sigma} \|_{L^{\infty}(0,T; L^{\infty}(\R^d))}\\
\leq & \| (\nabla \Phi_\vartheta \mathbb{I}_{B_1(0)}) *w^{\sigma} \|_{L^{\infty}(0,T; L^{\infty}(\R^d))} +\| (\nabla \Phi_\vartheta\mathbb{I}_{\R^d\backslash B_1(0)}) *w^{\sigma} \|_{L^{\infty}(0,T; L^{\infty}(\R^d))}\\
\leq & C(d,\vartheta)\|w^{\sigma} \|_{L^{\infty}(0,T; L^{\infty}(\R^d))}+C(d,\vartheta)\|w^{\sigma} \|_{L^{\infty}(0,T; L^1(\R^d))}\leq C.
\end{align*}
Furthermore, it holds for $\frac{d}{2}+1\leq\vartheta< d$ that
\begin{align*}
&\| D^2 \Phi_\vartheta *w^{\sigma} \|_{L^{\infty}(0,T; L^{\infty}(\R^d))} \\
\leq & \| (D^2 \Phi_\vartheta \mathbb{I}_{B_1(0)}) *w^{\sigma} \|_{L^{\infty}(0,T; L^{\infty}(\R^d))} +\| (D^2 \Phi_\vartheta\mathbb{I}_{\R^d\backslash B_1(0)}) *w^{\sigma} \|_{L^{\infty}(0,T; L^{\infty}(\R^d))}\\
\leq & C(d,\vartheta)\|w^{\sigma} \|_{L^{\infty}(0,T; L^{\infty}(\R^d))}+C(d,\vartheta)\|w^{\sigma} \|_{L^{\infty}(0,T; L^1(\R^d))}\leq C.
\end{align*}
The case $\vartheta=d$ needs to be handled differently, namely by using the Sobolev embedding theorem and the weak Young inequality we have
\begin{align}\label{ad1}
&\| D^2 \Phi_d *w^{\sigma} \|_{L^{\infty}(0,T; L^{\infty}(\R^d))}\leq C\| D \Phi_d *Dw^{\sigma} \|_{L^{\infty}(0,T; W^{s-1,p}(\R^d))}\nonumber\\
\leq & C(d) \|Dw^{\sigma} \|_{L^{\infty}(0,T; H^{s-1}(\R^d))}\leq C,
\end{align}
where $p=\frac{2d}{d-2}$, $s>\frac{d}{2}+2$, which implies $(s-1)p>d$.

Since $m\geq 2$ we obtain that
for all $x\in \R^d$,
\begin{eqnarray*}
&&|\nabla  \Phi_\vartheta  *w^{\sigma} ( t, x) - \nabla p  (w^{ \sigma} (t, x  ) )  | \leq  \| \nabla \Phi_\vartheta  *w^{\sigma} \|_{L^{\infty}(0,T; L^{\infty}(\R^d))} +  \| \nabla p (w^{\sigma}) \|_{L^{\infty}(0,T; L^{\infty}(\R^d))} \\
&\leq &\| \nabla \Phi_\vartheta *w^{\sigma} \|_{L^{\infty}(0,T; L^{\infty}(\R^d))} + \|\nabla w^{\sigma}  \|_{L^{\infty}(0,T; L^{\infty}(\R^d))}  \|p'\|_{L^{\infty}(0, \|w^{\sigma } \|_{L^{\infty} (0,T; L^{\infty}(\R^d))} )}\leq C,
\end{eqnarray*}	
and for all $x,y\in \R^d$,
\begin{align*}
	&\big| \nabla  \Phi_\vartheta  *w^{\sigma} ( t, x) - \nabla p  (w^{ \sigma} (t, x  ) ) -   \nabla  \Phi_\vartheta  *w^{\sigma} ( t, y) + \nabla p  (w^{ \sigma} (t, y  ) )     \big|\\
	\leq&  \left| \nabla  \Phi_\vartheta  *w^{\sigma} ( t, x)  -  \nabla  \Phi_\vartheta  *w^{\sigma} ( t, y)     \right| +  \left| \nabla p  (w^{ \sigma} (t, x  ) ) - \nabla p  (w^{ \sigma} (t, y  ) )  \right|\\
	\leq& \Big( \| D^2 \Phi_\vartheta  *w^{\sigma} \|_{L^{\infty}(0,T; L^{\infty}(\R^d))}  +\|p'\|_{L^{\infty}(0, \|w^{\sigma } \|_{L^{\infty} (0,T; L^{\infty}(\R^d))} )} \|D^2 w^{\sigma}\|_{L^{\infty}(0,T; L^{\infty}(\R^d))}     \\
	& \ \ \ \ \ +\|p''\|_{L^{\infty}(0, \|w^{\sigma } \|_{L^{\infty} (0,T; L^{\infty}(\R^d))} )}  \|\nabla  w^{\sigma}\|_{L^{\infty}(0,T; L^{\infty}(\R^d))}^2 \Big) |x-y|\leq C |x-y|.
\end{align*}
Then by classical result from stochastic differential equations, we deduce that there is a unique square integrable solution $\hat X_t^{i,\sigma}$. Furthermore, since the initial distribution of data $\zeta^i$ has a density function $u^\sigma_0$, we obtain that $\hat X_t^{i,\sigma}$ has a density function $u^{\sigma} \in L^{\infty}(0,T; L^1(\R^d))$.

It remains to prove that $u^{\sigma}$ is exactly the solution of \eqref{generalized_equation_u_sigma}, which is given here by $w^{\sigma}$.

Let $\varphi \in C_0^{\infty}([0,T)\times \R^d)$. Using It\^{o}'s formula and taking the expectation of \eqref{X_sigma_w} we obtain that
\begin{align*}
\int_{\R^d} \varphi(t, x) u^{\sigma}(t,x) dx =& \int_{\R^d} \varphi(0, x) u_0^\sigma(x) + \int_{\R^d} \int_0^t \partial_s \varphi(s, x) u^{\sigma}(s,x)  ds dx \\
&+ \int_{\R^d} \int_0^t \nabla \Phi_\vartheta  *w^{\sigma} ( s, x) \cdot \nabla \varphi(s, x) u^{\sigma}(s,x)  ds dx\\
&- \int_{\R^d} \int_0^t \nabla p  (w^{ \sigma} (s, x  ) ) \cdot \nabla \varphi(s, x) u^{\sigma}(s,x)  ds dx  +\int_{\R^d} \int_0^t \sigma \Delta \varphi(s, x) u^{\sigma}(s,x)  ds dx.
\end{align*}
Therefore, $u^{\sigma} \in L^\infty(0,T; L^1(\R^d))$ is a  weak solution (in the sense of distribution) to the linear partial differential equation
\begin{align}
\label{lin_eq_density}
\begin{cases}\partial_t u^{\sigma} = \sigma \Delta u^{\sigma} -\nabla \cdot  ( u^{\sigma} \nabla \Phi_\vartheta *w^{\sigma} ) + \nabla  \cdot  (u^{\sigma} \nabla p (w^{\sigma}) ),  \\
u^{\sigma}(0,x)=u_0^\sigma(x), \ \ \ \ x\in \R^d, t>0.
\end{cases}
\end{align}
Since $w^\sigma$ is the solution of \eqref{generalized_equation_u_sigma}, it is enough to prove that the weak solution to \eqref{lin_eq_density} is unique.

In the following we will show that if $u_0^\sigma\equiv 0$, then $u^{\sigma} \equiv 0$.
Let $0\leq  u^{\sigma} \in L^{\infty}(0,T; L^1(\R^d))$ be a solution to
\begin{align}
\label{integr_eq_density}
\int_0^T \int_{\R^d} u^{\sigma}\big(\partial_t \varphi + \sigma \Delta \varphi + \nabla \Phi_\vartheta *w^{\sigma}\cdot \nabla \varphi
- \nabla p(w^{\sigma}) \cdot \nabla \varphi\big) \, dx \, dt=0 , \ \ \ \ \forall \varphi \in C_0^{\infty}([0,T) \times \R^d).	
\end{align}
If we can show that $u^{\sigma} \in L^q((0,T) \times \R^d)$ for $q \in (1, \frac{d}{d-1})$, then for $g\in C_0^{\infty}((0,T)\times \R^d)$, it holds
\begin{align*}
\int_0^T \int_{\R^d} u^{\sigma} g \, dx \, dt= \int_0^T \int_{\R^d} u^{\sigma}\big(\partial_t \varphi + \sigma \Delta \varphi + \nabla \Phi_\vartheta *w^{\sigma}\cdot \nabla \varphi - \nabla p(w^{\sigma}) \cdot \nabla \varphi\big) \, dx \, dt =0,
\end{align*}
where we used that $\varphi \in W^{2,1}_{\tilde{q}} ( (0,T)\times \R^d )$ $(\frac{1}{q}+\frac{1}{\tilde q}=1)$ for $\tilde{q}>d$ is the strong solution of the following backward parabolic equation:
\begin{align*}
\begin{cases}
\partial_t \varphi + \sigma \Delta \varphi + \nabla \Phi_\vartheta *w^{\sigma}\cdot \nabla \varphi - \nabla p(w^{\sigma}) \cdot \nabla \varphi =g \ \text{in} \ (0,T)\times \R^d, \\
\varphi( T, \cdot )\equiv 0 \ \text{in} \ \R^d.
\end{cases}
\end{align*}
Therefore by the fundamental lemma of variational principle we get that
\begin{align*}
u^{\sigma} =0 \ \text{a.e.} \ \text{in} \ (0,T)\times \R^d.
\end{align*}
Finally, we concentrate on proving that $u^\sigma \in L^q((0,T) \times \R^d)$ for $q \in (1, \frac{d}{d-1})$.
For fixed $M>0$, we define  $g: [0, \infty) \rightarrow [0, \infty)$ such that
\begin{align*}
g(v):= \begin{cases} v^{q-1}, \ \text{for} \ 0\leq v \leq M^{\frac{1}{q-1}}, \\
M,  \ \text{for} \ v > M^{\frac{1}{q-1}}.
\end{cases}
\end{align*}
Since $u^{\sigma} \in L^{\infty}(0,T; L^1(\R^d))$, it follows that $g(u^{\sigma}) \in L^{\frac{1}{q-1}}((0,T) \times \R^d)$. Now consider a sequence of functions $(\bar{g}_n)_{n\in \N} \subset C_0^{\infty}((0,T) \times \R^d)$ such that
\begin{align}
\label{g_n_frst_prop}
&\bar{g}_n \rightarrow g(u^{\sigma})  \ \ \text{in} \ \ L^{\frac{1}{q-1}}((0,T) \times \R^d),\\
\label{g_n_scnd_prop}
&\| \bar{g}_n \|_{L^{\infty}((0,T)\times \R^d)} \leq \bar{C}, \ \ \forall n \in \N,\\
\label{g_n_thrd_prop}
&\bar{g}_n\overset{*}{\rightharpoonup} g(u^{\sigma}) \ \ \text{in} \ \ L^{\infty}((0,T)\times \R^d),
\end{align}
where $\bar{C}$ is a positive constant which depends on $M$.
From \eqref{g_n_frst_prop}, \eqref{g_n_scnd_prop} and the interpolation inequality we obtain that
\begin{align}
\label{g_n_frth_prop}
\bar{g}_n \rightarrow g(u^{\sigma}) \ \ \text{in} \ \ L^{r}((0,T) \times \R^d) \ \ \text{for} \ \ r\in \Big[\frac{1}{q-1}, \infty\Big).
\end{align}
Let us consider the backward heat equation
\begin{align*}
&\partial_t \varphi_n +\sigma \Delta \varphi_n = \bar{g}_n \ \ \text{in} \ \ (0,T)\times\R^d,\\
&\varphi_n(\cdot, T)=0 \ \ \text{in}\ \ \R^d.
\end{align*}
Since $C_0^{\infty}([0,T)\times \R^d)$ is dense in $\{ \varphi \in C_b^{2,1}([0,T]\times \R^d) \ | \ \varphi( T, \cdot )\equiv 0 \}$, it  implies that  \eqref{integr_eq_density}  holds for any $\varphi \in C_b^{2,1}([0,T]\times \R^d)$ such that $\varphi( T, \cdot )\equiv 0$. Choosing $\varphi_n$ in \eqref{integr_eq_density} and using the Gagliardo–Nirenberg–Sobolev inequality we have
\begin{align*}
\int_0^T \int_{\R^d} u^{\sigma} \bar{g}_n \, dx \, dt
&\leq \int_0^T \| u^{\sigma} \|_{L^1(\R^d)} \| \nabla \varphi_n \|_{L^{\infty}(\R^d)} \big( \| \nabla \Phi_\vartheta*w^{\sigma} \|_{L^{\infty}(\R^d)} + \| \nabla p(w^{\sigma}) \|_{L^{\infty}(\R^d)} \big) \, dt \\
&\leq C \int_0^T \| \nabla \varphi_n  \|_{L^{\infty}(\R^d)} \, dt
\leq C \int_0^T \| \varphi_n \|_{L^{\frac{1}{q-1}}(\R^d)}^{1-\theta} \| D^2 \varphi_n \|_{L^{\tilde{q}}(\R^d)}^{\theta}  \, dt,
\end{align*}
where $\tilde{q}=\frac{q}{q-1}>d$, $\theta = \frac{q-1+\frac{1}{d}}{\frac{1}{q} +q+\frac{2}{d} -2} \in (\frac{1}{2}, 1)$ and $C$ appeared in this proof is independent of $n$ and $M$.
Together with H\"older's inequality, we infer that
\begin{align*}
\int_0^T \int_{\R^d} u^{\sigma} \bar{g}_n \, dx \, dt
\leq& C\| \varphi_n \|_{L^1\big(0,T;L^\frac{1}{q-1}(\R^d)\big)}^{1-\theta}\| D^2 \varphi_n \|_{L^1(0,T;L^{\tilde{q}}(\R^d))}^\theta\\
\leq& C \| \varphi_n \|_{L^{\frac{1}{q-1}}((0,T)\times \R^d)}^{1-\theta} \| D^2 \varphi_n \|_{L^{\tilde{q}}((0,T)\times \R^d)}^{\theta}.
\end{align*}
Then using the parabolic regularity theory, we have
\begin{align*}
\int_0^T \int_{\R^d} u^{\sigma} \bar{g}_n \, dx \, dt &\leq C \| \bar{g}_n \|_{L^{\frac{1}{q-1}}((0,T)\times \R^d)}^{1-\theta} \| \bar{g}_n  \|_{L^{\tilde{q}}((0,T)\times \R^d)}^{\theta}.
\end{align*}
Taking $n \rightarrow \infty$ on both sides of the above inequality together with \eqref{g_n_thrd_prop}-\eqref{g_n_frth_prop}, we obtain that
\begin{align*}
\int_0^T \int_{\R^d} u^{\sigma} g(u^{\sigma}) \, dx \, dt \leq C \| g(u^{\sigma})  \|_{L^{\frac{1}{q-1}}((0,T)\times \R^d)}^{1-\theta} \| g(u^{\sigma}) \|_{L^{\tilde{q}}((0,T)\times \R^d)}^{\theta} \leq C \| g(u^{\sigma}) \|_{L^{\tilde{q}}((0,T)\times \R^d)}^{\theta}.
\end{align*}
Young's inequality implies
\begin{align*}
\int_0^T \int_{\R^d} u^{\sigma} g(u^{\sigma}) \, dx \, dt \leq C +\frac{1}{2} \int_0^T \int_{\R^d}  |g(u^{\sigma})|^{\tilde{q}} \, dx \, dt.
\end{align*}
From the definition of $g(u^{\sigma})$ and $\tilde{q}=\frac{q}{q-1}$, we have
\begin{align*}
0&\leq \int_{\{ u^{\sigma} \leq M^{\frac{1}{q-1}} \}} (u^{\sigma})^q \, dx \, dt + M \int_{\{ u^{\sigma}> M^{\frac{1}{q-1}} \}} u^{\sigma} \, dx \, dt \\
&\leq C + \frac{1}{2} \int_{\{ u^{\sigma}\leq M^{\frac{1}{q-1}} \}} (u^{\sigma})^q \, dx \, dt + \frac{1}{2}  \int_{\{u^{\sigma} > M^{\frac{1}{q-1}} \}} M^{\frac{q}{q-1}} \, dx \, dt.
\end{align*}
Moreover,
\begin{align*}
0\leq \frac{1}{2} \int_{\{ u^{\sigma}\leq M^{\frac{1}{q-1}} \}} (u^{\sigma})^q \, dx \, dt  \leq C + M \int_{\{ u^{\sigma} > M^{\frac{1}{q-1}} \}} \Big( \frac{1}{2} M^{\frac{1}{q-1}} - u^{\sigma} \Big) \, dx \, dt \leq C.
\end{align*}
Therefore, this allows us to take the limit $M\rightarrow \infty$ in the above inequality and so deduce
\begin{align*}
\| u^{\sigma} \|_{L^q((0,T)\times \R^d)} \leq C.
\end{align*}

\end{proof}

With the same idea we can prove the solvability of the problem \eqref{generalized_intermediate_particle_model} and obtain the following proposition
\begin{proposition}
	[Solvability of the problem \eqref{generalized_intermediate_particle_model}] Assume that $m=2$ or $m\geq 3$, and $u^{\varepsilon,\sigma}\in  L^\infty(0,T;L^1 (\R^d))\cap L^\infty(0,T;H^s(\R^d))$, $s>\frac{d}{2}+2$, is a solution of \eqref{generalized_equation_u_epsilon_sigma}, then the problem \eqref{generalized_intermediate_particle_model} has a unique square integrable solution $\bar{X}^{i,\varepsilon,\sigma}_t$ with $u^{\varepsilon,\sigma}$ as the density of its distribution.
\end{proposition}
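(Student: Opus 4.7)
The plan is to mirror the proof of the previous proposition step by step, with the mollifiers $\Phi^{\varepsilon_k}$, $V^{\varepsilon_p}$ and the cut-off $p_\lambda$ playing the roles of $\Phi$ and $p$. First I would fix any $w^{\varepsilon,\sigma}\in L^\infty(0,T;W^{2,\infty}(\R^d))$ solving \eqref{generalized_equation_u_epsilon_sigma} and consider the stochastic system
\begin{align*}
d\bar X_t^{i,\varepsilon,\sigma}=\nabla\Phi^{\varepsilon_k}\!\ast w^{\varepsilon,\sigma}(t,\bar X_t^{i,\varepsilon,\sigma})\,dt-\nabla p_\lambda\!\bigl(V^{\varepsilon_p}\!\ast w^{\varepsilon,\sigma}(t,\bar X_t^{i,\varepsilon,\sigma})\bigr)\,dt+\sqrt{2\sigma}\,dB_t^i,
\end{align*}
with $\bar X_0^{i,\varepsilon,\sigma}=\zeta^i$. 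Since $\Phi^{\varepsilon_k}=\Phi\ast V^{\varepsilon_k}$ is smooth with bounded first and second derivatives (because of the mollification against $V^{\varepsilon_k}$), $V^{\varepsilon_p}$ is smooth, $p_\lambda\in C^3(\R_+)$ is constant outside $[\lambda,2/\lambda]$ so that $p_\lambda',p_\lambda''$ are globally bounded, and $w^{\varepsilon,\sigma}\in L^\infty(0,T;W^{2,\infty}(\R^d))$, the drift of this SDE is globally bounded and globally Lipschitz in the spatial variable, uniformly in $t$. Classical SDE theory then yields a unique square-integrable strong solution $\bar X_t^{i,\varepsilon,\sigma}$, whose law admits a density $\bar u^{\varepsilon,\sigma}(t,\cdot)\in L^\infty(0,T;L^1(\R^d))$ inherited from the density $u_0^\sigma$ of $\zeta^i$.

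Next I would apply It\^o's formula to $\varphi(t,\bar X_t^{i,\varepsilon,\sigma})$ for $\varphi\in C_0^\infty([0,T]\times\R^d)$ and take expectations, which (after rewriting $\mathbb{E}[\varphi(t,\bar X_t^{i,\varepsilon,\sigma})]=\int_{\R^d}\varphi\bar u^{\varepsilon,\sigma}\,dx$) shows that $\bar u^{\varepsilon,\sigma}$ is a distributional solution of the linear parabolic equation
\begin{align*}
\partial_t\bar u^{\varepsilon,\sigma}=\sigma\Delta \bar u^{\varepsilon,\sigma}-\nabla\!\cdot\!\bigl(\bar u^{\varepsilon,\sigma}\nabla\Phi^{\varepsilon_k}\!\ast w^{\varepsilon,\sigma}\bigr)+\nabla\!\cdot\!\bigl(\bar u^{\varepsilon,\sigma}\nabla p_\lambda(V^{\varepsilon_p}\!\ast w^{\varepsilon,\sigma})\bigr),
\end{align*}
with $\bar u^{\varepsilon,\sigma}(0,\cdot)=u_0^\sigma$. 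By construction the same equation is satisfied by $w^{\varepsilon,\sigma}=u^{\varepsilon,\sigma}$, so it remains to prove uniqueness of this linear Fokker--Planck type equation, from which the identification $\bar u^{\varepsilon,\sigma}=u^{\varepsilon,\sigma}$ follows.

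For the uniqueness step I would reproduce the duality argument of the preceding proposition verbatim, with the drift coefficients $b_1(t,x):=\nabla\Phi^{\varepsilon_k}\!\ast w^{\varepsilon,\sigma}(t,x)$ and $b_2(t,x):=\nabla p_\lambda(V^{\varepsilon_p}\!\ast w^{\varepsilon,\sigma}(t,x))$. The inputs required are $L^\infty((0,T)\times\R^d)$ bounds on $b_1$ and $b_2$: the first follows from Young's inequality and the smoothness of $\nabla\Phi^{\varepsilon_k}$ together with $w^{\varepsilon,\sigma}\in L^\infty$, while the second follows from $\|p_\lambda'\|_{L^\infty(\R_+)}<\infty$ and $\|\nabla(V^{\varepsilon_p}\!\ast w^{\varepsilon,\sigma})\|_{L^\infty}\le\|\nabla w^{\varepsilon,\sigma}\|_{L^\infty}$. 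Given these bounds, the backward heat equation $\partial_t\varphi_n+\sigma\Delta\varphi_n=\bar g_n$ is solved in $W^{2,1}_{\tilde q}$ in the same way, and the approximation scheme $g(v)=v^{q-1}\wedge M$, the Gagliardo--Nirenberg estimate, parabolic regularity, Young's inequality and the limit $M\to\infty$ go through unchanged to yield $\bar u^{\varepsilon,\sigma}\in L^q((0,T)\times\R^d)$ for $q\in(1,\tfrac{d}{d-1})$, and subsequently $\bar u^{\varepsilon,\sigma}=0$ whenever $u_0^\sigma\equiv 0$.

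The main obstacle I anticipate is exactly this higher-integrability bootstrap, as it already constituted the bulk of the previous proof. The remaining work is essentially bookkeeping: one needs to verify that replacing $\Phi$ by $\Phi^{\varepsilon_k}$ and $p(w^\sigma)$ by $p_\lambda(V^{\varepsilon_p}\!\ast w^{\varepsilon,\sigma})$ only affects the drift $L^\infty$ norms by constants depending on $\varepsilon_k,\varepsilon_p,\lambda,\sigma$, which is harmless because uniqueness of the linear equation is the only goal here (no uniformity in the regularization parameters is needed at this stage). Consequently the proof follows the template of the previous proposition with purely cosmetic modifications.
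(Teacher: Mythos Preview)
Your proposal is correct and matches the paper's approach exactly: the paper does not even write out a separate proof for this proposition, stating only ``With the same idea we can prove the solvability of problem \eqref{generalized_intermediate_particle_model}''. Your observation that the mollified/cut-off coefficients $\Phi^{\varepsilon_k}$, $V^{\varepsilon_p}$, $p_\lambda$ enjoy at least as much regularity as $\Phi$ and $p$ (so the Lipschitz, boundedness, and duality arguments carry over with only $\varepsilon_k,\varepsilon_p,\lambda$-dependent constants) is precisely the point, and no uniformity in these parameters is needed here.
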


Furthermore we obtain the error estimates between \eqref{generalized_intermediate_particle_model} and \eqref{generalized_particle_model} in the following
\begin{proposition}
	\label{lemma_X_bar_X_hat}
Assume that $m=2$ or $m\geq 3$, and Assumption \ref{ass} holds, then there exists a constant $C>0$ independent of $N, \lambda , \varepsilon_k$ and $\varepsilon_p$ such that
	\begin{align*}
	\underset{t\in [0,T]}{\sup } \mathbb{E} \Big[ \underset{i\in\{1, \ldots , N\}}{\max }  \big|  \bar{X}_t^{i, \varepsilon, \sigma}  - \hat{X}_t^{i, \sigma}  \big|^2 \Big] \leq  C( \varepsilon_k+\varepsilon_p )^2 \ \exp\big( C T\big),
	\end{align*}
	where $\bar{X}_t^{i, \varepsilon, \sigma} $ and $\hat{X}_t^{i, \sigma} $ are solutions of \eqref{generalized_intermediate_particle_model} and \eqref{generalized_particle_model} respectively.
\end{proposition}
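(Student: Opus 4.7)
Since \eqref{generalized_intermediate_particle_model} and \eqref{generalized_particle_model} are driven by the same Brownian motions $B^i$ and share the same initial data $\zeta^i$, I would begin by subtracting the two SDEs: the stochastic integrals cancel and the resulting equation is pathwise deterministic in terms of the already-constructed trajectories. Setting $Y_t^i := \bar{X}_t^{i,\varepsilon,\sigma} - \hat{X}_t^{i,\sigma}$ and writing $F^{\varepsilon} := \nabla\Phi^{\varepsilon_k}*u^{\varepsilon,\sigma}$, $F := \nabla\Phi*u^{\sigma}$, $G^{\varepsilon} := \nabla p_{\lambda}(V^{\varepsilon_p}*u^{\varepsilon,\sigma})$, $G := \nabla p(u^{\sigma})$, this gives
\begin{align*}
Y_t^i = \int_0^t \bigl[F^{\varepsilon}(s,\bar{X}_s^{i,\varepsilon,\sigma}) - F(s,\hat{X}_s^{i,\sigma})\bigr]\,ds - \int_0^t \bigl[G^{\varepsilon}(s,\bar{X}_s^{i,\varepsilon,\sigma}) - G(s,\hat{X}_s^{i,\sigma})\bigr]\,ds.
\end{align*}
The plan is to Gronwall this identity trajectory-by-trajectory.

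For each integrand I would insert and subtract $F^{\varepsilon}(s,\hat{X}_s^{i,\sigma})$ and $G^{\varepsilon}(s,\hat{X}_s^{i,\sigma})$, splitting it into a Lipschitz term in $|Y_s^i|$ and a uniform-in-$x$ drift difference $\|F^{\varepsilon}-F\|_{L^{\infty}}$ resp. $\|G^{\varepsilon}-G\|_{L^{\infty}}$. Assumption \ref{ass} provides $u^{\varepsilon,\sigma}\in L^{\infty}(0,T;W^{2,\infty})$ uniformly in $\varepsilon_k,\varepsilon_p$, so transferring one derivative onto $u^{\varepsilon,\sigma}$ gives a uniform Lipschitz constant for $F^{\varepsilon}$. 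For $G^{\varepsilon}$ the chain rule combined with the cut-off definition \eqref{p_lambda_definition} (noting that $m=2$ or $m\geq 3$ keeps $p_{\lambda}',p_{\lambda}''$ uniformly bounded on the relevant range $[0,\|u^{\varepsilon,\sigma}\|_{\infty}]$) together with $V^{\varepsilon_p}*u^{\varepsilon,\sigma}\in W^{1,\infty}$ does the same.

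For the uniform drift errors I would decompose
\begin{align*}
F^{\varepsilon}-F = \nabla\Phi^{\varepsilon_k}*(u^{\varepsilon,\sigma}-u^{\sigma}) + (\nabla\Phi^{\varepsilon_k}-\nabla\Phi)*u^{\sigma},
\end{align*}
and analogously split $G^{\varepsilon}-G$ into a mollifier error $V^{\varepsilon_p}*u^{\varepsilon,\sigma}-u^{\varepsilon,\sigma}$, a PDE error $u^{\varepsilon,\sigma}-u^{\sigma}$, and a cut-off error absorbed by the choice $\lambda=\varepsilon_p^d/2$. Combining the $W^{1,\infty}$-bound $\|u^{\varepsilon,\sigma}-u^{\sigma}\|_{L^{\infty}_t W^{1,\infty}_x}\leq C(\varepsilon_k+\varepsilon_p)$ from Assumption \ref{ass} with standard mollifier estimates (and, for the $\nabla\Phi$-convolutions, integration by parts together with the integrability and spatial decay of $u^{\sigma}$ and $u^{\varepsilon,\sigma}-u^{\sigma}$ furnished by \cite{gvozdik2022partone}) should yield $\|F^{\varepsilon}-F\|_{L^{\infty}}+\|G^{\varepsilon}-G\|_{L^{\infty}}\leq C(\varepsilon_k+\varepsilon_p)$. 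Feeding these into the integral identity and applying Gronwall pathwise gives $|Y_t^i|^2\leq C(\varepsilon_k+\varepsilon_p)^2\exp(CT)$ uniformly in $i$ and $\omega$; taking the maximum over $i$, then the expectation, and finally the supremum in $t\in[0,T]$ yields the claim. The main obstacle is the $L^{\infty}$-control of $\nabla\Phi^{\varepsilon_k}*(u^{\varepsilon,\sigma}-u^{\sigma})$ and of the mollification error $(\nabla\Phi^{\varepsilon_k}-\nabla\Phi)*u^{\sigma}$: since $\nabla\Phi$ is not globally integrable, Assumption \ref{ass} alone is not evidently sufficient, and one must invoke the extra decay/integrability properties of $u^{\sigma}$ and of $u^{\varepsilon,\sigma}-u^{\sigma}$ established in the companion paper.
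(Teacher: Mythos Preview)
Your approach is essentially the one taken in the paper: subtract the two SDEs (the paper phrases this via It\^o's formula for the squared difference, but since the Brownian increments cancel there is no quadratic-variation term, so your pathwise ODE formulation is equivalent and arguably cleaner), insert intermediate terms to isolate a Lipschitz contribution in $|Y_s^i|$ and a uniform drift error of size $O(\varepsilon_k+\varepsilon_p)$, then Gr\"onwall. The only cosmetic difference is where the intermediate terms are inserted: the paper evaluates the intermediate expressions at $\bar X_s^{i,\varepsilon,\sigma}$ and uses the Lipschitz bound of $\nabla\Phi*u^{\sigma}$ and $\nabla p(u^{\sigma})$, whereas you insert at $\hat X_s^{i,\sigma}$ and use the Lipschitz bound of the $\varepsilon$-regularised drifts; both choices yield the same constants thanks to Assumption~\ref{ass}. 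Your observation that the argument is in fact pathwise (yielding an almost-sure bound before taking expectation) is a harmless strengthening the paper does not state. You also correctly flag the one point the paper leaves implicit, namely that $\|\Phi*D^2 u^{\sigma}\|_{L^{\infty}}$ and $\|\Phi*\nabla(u^{\sigma}-u^{\varepsilon,\sigma})\|_{L^{\infty}}$ must be finite; the paper simply writes these quantities as constants, relying (as you surmise) on integrability from \cite{gvozdik2022partone}.
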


\begin{proof}
By taking the difference of \eqref{generalized_intermediate_particle_model} and \eqref{generalized_particle_model}, and using  It\^o's formula, we deduce that
 \begin{align*}
 \left| \bar{X}_t^{i, \varepsilon, \sigma} -\hat{X}_t^{i, \sigma} \right|^2=& \int_0^t 2 (\bar{X}_s^{i, \varepsilon, \sigma} -\hat{X}_s^{i, \sigma})  \big( - \nabla p_{\lambda}\big(  V^{\varepsilon_p} * u^{\varepsilon, \sigma} (s, \bar{X}_s^{i, \varepsilon, \sigma} )\big) + \nabla p  \big( u^{ \sigma} (s, \hat{X}_s^{i,  \sigma}  )  \big)  \big) \, ds\\
 &+\int_0^t 2 (\bar{X}_s^{i, \varepsilon, \sigma} -\hat{X}_s^{i, \sigma})  \big(  \nabla \Phi_\vartheta^{\varepsilon_k} *u^{\varepsilon, \sigma} (s, \bar{X}_s^{i, \varepsilon, \sigma} ) - \nabla \Phi_\vartheta *u^{ \sigma} (s, \hat{X}_s^{i,  \sigma} )  \big) \, ds.
 \end{align*}
By inserting intermediate terms
 \begin{align*}
  |  \nabla \Phi_\vartheta^{\varepsilon_k} *u^{\varepsilon, \sigma} (s, \bar{X}_s^{i, \varepsilon, \sigma} ) - \nabla \Phi_\vartheta *u^{ \sigma} (s, \hat{X}_s^{i,  \sigma} ) | \leq& |\nabla \Phi_\vartheta *u^{ \sigma} (s, \hat{X}_s^{i,  \sigma} ) - \nabla \Phi_\vartheta *u^{ \varepsilon,\sigma} (s, \hat{X}_s^{i,  \sigma} ) |\\
  &+ | \nabla \Phi_\vartheta *u^{\varepsilon, \sigma} (s, \hat{X}_s^{i,  \sigma} ) - \nabla \Phi_\vartheta *u^{\varepsilon, \sigma} (s, \bar{X}_s^{i, \varepsilon, \sigma} ) |\\
  &+ |\nabla \Phi_\vartheta *u^{\varepsilon, \sigma} (s, \bar{X}_s^{i, \varepsilon, \sigma} ) - \nabla \Phi_\vartheta^{\varepsilon_k} *u^{\varepsilon, \sigma} (s, \bar{X}_s^{i, \varepsilon, \sigma} ) |,
 \end{align*}
 it follows from \cite[Lemma 13]{BCL} that for $\frac{d}{2}+1\leq\vartheta<d$
   \begin{align}
   \label{estimates_Keller_Segel_2}
   &\left| \int_0^t 2 (\bar{X}_s^{i, \varepsilon, \sigma} -\hat{X}_s^{i, \sigma})  \big(  \nabla \Phi_\vartheta^{\varepsilon_k} *u^{\varepsilon, \sigma} (s, \bar{X}_s^{i, \varepsilon, \sigma} ) - \nabla \Phi_\vartheta *u^{ \sigma} (s, \hat{X}_s^{i,  \sigma} )  \big) \, ds  \right| \notag\\
   \leq&  2\|\nabla\Phi_\vartheta* (u^{\sigma} -u^{\varepsilon, \sigma}) \|_{L^{\infty}(0,T; L^{\infty}(\R^d))} \int_0^t \left| \bar{X}_s^{i, \varepsilon, \sigma} -\hat{X}_s^{i, \sigma} \right| ds\notag \\
  &+ 2\| \Phi_\vartheta * D^2 u^{\varepsilon,\sigma}  \|_{L^{\infty}(0,T; L^{\infty}(\R^d))} \int_0^t  \left| \bar{X}_s^{i, \varepsilon, \sigma} -\hat{X}_s^{i, \sigma} \right|^2 ds \notag\\
  &+2 \| \Phi_\vartheta * D^2 u^{ \varepsilon,\sigma}  \|_{ L^{\infty}(0,T; L^{\infty}(\R^d))} \varepsilon_k   \int_0^t \left| \bar{X}_s^{i, \varepsilon, \sigma} -\hat{X}_s^{i, \sigma} \right| ds,
   \end{align}
and for $\vartheta=d$
\begin{align}
   \label{estimates_Keller_Segel_2d}
   &\left| \int_0^t 2 (\bar{X}_s^{i, \varepsilon, \sigma} -\hat{X}_s^{i, \sigma})  \big(  \nabla \Phi_d^{\varepsilon_k} *u^{\varepsilon, \sigma} (s, \bar{X}_s^{i, \varepsilon, \sigma} ) - \nabla \Phi_d *u^{ \sigma} (s, \hat{X}_s^{i,  \sigma} )  \big) \, ds  \right| \notag\\
   \leq&  2\|\nabla\Phi_d* (u^{\sigma} -u^{\varepsilon, \sigma}) \|_{L^{\infty}(0,T; L^{\infty}(\R^d))} \int_0^t \left| \bar{X}_s^{i, \varepsilon, \sigma} -\hat{X}_s^{i, \sigma} \right| ds\notag \\
  &+ 2\| \Phi_d * D^2 u^{\varepsilon,\sigma}  \|_{L^{\infty}(0,T; L^{\infty}(\R^d))} \int_0^t  \left| \bar{X}_s^{i, \varepsilon, \sigma} -\hat{X}_s^{i, \sigma} \right|^2 ds \notag\\
  &+2 \| \Phi_d * D^2 u^{ \varepsilon,\sigma}  \|_{ L^{\infty}(0,T; L^{\infty}(\R^d))} \varepsilon_k   \int_0^t \left| \bar{X}_s^{i, \varepsilon, \sigma} -\hat{X}_s^{i, \sigma} \right| ds \notag\\
  &+2\|u^{\varepsilon,\sigma}\|_{L^\infty(0,T;L^\infty(\R^d))}\|\nabla\Phi_d\|_{L^1(B_{\varepsilon_k}(0))}\int_0^t\left| \bar{X}_s^{i, \varepsilon, \sigma} -\hat{X}_s^{i, \sigma} \right| ds.
   \end{align}
Using the definition of $\nabla\Phi_d$, we can derive that $\|\nabla\Phi_d\|_{L^1(B_{\varepsilon_k}(0))}\leq C\varepsilon_k$.
By the similar argument as that one in \eqref{ad1}, we infer from Assumption \ref{ass} that, for $\frac{d}{2}+1\le\nu\le d$, 
$\|\nabla\Phi_\vartheta* (u^{\sigma} -u^{\varepsilon, \sigma}) \|_{L^{\infty}(0,T; L^{\infty}(\R^d))}\le C(\varepsilon_k+\varepsilon_p)$.
The other term can be handled similarly. However due to the nonlinearity, more terms are involved.
\begin{align*}
\big| \nabla p  \big( u^{ \sigma} (s, \hat{X}_s^{i,  \sigma}  )  \big)  - \nabla p_{\lambda}\big(  V^{\varepsilon_p} * u^{\varepsilon, \sigma}(s, \bar{X}_s^{i, \varepsilon, \sigma} ) \big)  \big| \leq& \big| \nabla p  \big( u^{ \sigma} (s, \hat{X}_s^{i,  \sigma}  )  \big) -  \nabla p  \big( u^{ \sigma} (s, \bar{X}_s^{i, \varepsilon, \sigma}  )  \big)  \big|\\
&+ \big|   \nabla p  \big( u^{ \sigma} (s, \bar{X}_s^{i, \varepsilon, \sigma}  )  \big)  - \nabla p  \big( u^{ \varepsilon,  \sigma} (s,\bar{X}_s^{i, \varepsilon, \sigma}  )  \big)  \big|\\
&+\big|  \nabla p  \big( u^{ \varepsilon,  \sigma} (s, \bar{X}_s^{i, \varepsilon, \sigma}  )  \big)   -  \nabla  p_{\lambda} \big( u^{ \varepsilon,  \sigma} (s, \bar{X}_s^{i, \varepsilon, \sigma}  )  \big)   \big| \\
&+ \big|  \nabla p_{\lambda}  \big( u^{ \varepsilon,  \sigma} (s,\bar{X}_s^{i, \varepsilon, \sigma}  )  \big)   - \nabla p_{\lambda}\big(  V^{\varepsilon_p} * u^{\varepsilon, \sigma} (s, \bar{X}_s^{i, \varepsilon, \sigma} ) \big)   \big|.
\end{align*}
We rewrite the first two term on the right-hand side as follows:
\begin{align*}
 \nabla p  \big( u^{ \sigma} (s, \hat{X}_s^{i,  \sigma}  )  \big) -  \nabla p  \big( u^{ \sigma} (s, \bar{X}_s^{i, \varepsilon, \sigma}  )  \big)  =&   p' \big( u^{ \sigma} (s, \hat{X}_s^{i,  \sigma}  ) \big) \big(\nabla   u^{ \sigma} (s, \hat{X}_s^{i,  \sigma}  ) -  \nabla   u^{ \sigma} (s, \bar{X}_s^{i, \varepsilon, \sigma}  ) \big) \\
 &+ \big(  p' \big( u^{ \sigma} (s, \hat{X}_s^{i,  \sigma}  ) \big) -  p'  \big( u^{ \sigma} (s, \bar{X}_s^{i, \varepsilon, \sigma}  )  \big)  \big)  \nabla   u^{ \sigma} (s, \bar{X}_s^{i, \varepsilon, \sigma}  ) ,
 \end{align*}
 and
 \begin{align*}
 \nabla p  \big( u^{ \sigma} (s, \bar{X}_s^{i, \varepsilon, \sigma}  )  \big)  - \nabla p  \big( u^{ \varepsilon,  \sigma} (s, \bar{X}_s^{i, \varepsilon, \sigma}  )  \big)  =& p'  \big( u^{ \sigma} (s, \bar{X}_s^{i, \varepsilon, \sigma}  )  \big) \big( \nabla u^{ \sigma} (s, \bar{X}_s^{i, \varepsilon, \sigma}  ) - \nabla u^{ \varepsilon,  \sigma} (s, \bar{X}_s^{i, \varepsilon, \sigma}  )  \big)\\
 &+ \big(p'  \big( u^{ \sigma} (s, \bar{X}_s^{i, \varepsilon, \sigma}  )  \big) - p'  \big( u^{ \varepsilon,  \sigma} (s, \bar{X}_s^{i, \varepsilon, \sigma}  )  \big)  \big) \nabla u^{ \varepsilon,  \sigma} (s, \bar{X}_s^{i, \varepsilon, \sigma}  ).
 \end{align*}
The third term is zero for small enough $\lambda$, since $\|u^{\varepsilon, \sigma} \|_{L^{\infty}(0,T; W^{1,\infty}(\R^d))}\leq C$.

The fourth term can be written as
 \begin{align*}
 &\nabla p_{\lambda}  \big( u^{ \varepsilon,  \sigma} (s,\bar{X}_s^{i, \varepsilon, \sigma}  )  \big)   - \nabla p_{\lambda}\big(  V^{\varepsilon_p} * u^{\varepsilon, \sigma} (s, \bar{X}_s^{i, \varepsilon, \sigma} ) \big)\\
  =&  p_{\lambda}'  \big( u^{ \varepsilon,  \sigma} (s,\bar{X}_s^{i, \varepsilon, \sigma}  )  \big) \big( \nabla u^{ \varepsilon,  \sigma} (s,\bar{X}_s^{i, \varepsilon, \sigma}  )   -    V^{\varepsilon_p} *  \nabla  u^{\varepsilon, \sigma} (s, \bar{X}_s^{i, \varepsilon, \sigma} )  \big) \\
&+ \Big( p_{\lambda}'  \big( u^{ \varepsilon,  \sigma} (s,\bar{X}_s^{i, \varepsilon, \sigma}  )  \big) -  p_{\lambda}' \big(  V^{\varepsilon_p} * u^{\varepsilon, \sigma} (s, \bar{X}_s^{i, \varepsilon, \sigma} ) \big) \Big)    V^{\varepsilon_p} * \nabla u^{\varepsilon, \sigma} (s, \bar{X}_s^{i, \varepsilon, \sigma} ).
 \end{align*}
Hence,
\begin{align*}
 &\mathbb{E} \Big[  \underset{i\in\{1, \ldots , N\}}{\max }  \int_0^t 2 (\bar{X}_s^{i, \varepsilon, \sigma} -\hat{X}_s^{i, \sigma})  \Big( - \nabla p_{\lambda}\big(  V^{\varepsilon_p} * u^{\varepsilon, \sigma} (s, \bar{X}_s^{i, \varepsilon, \sigma} ) \big) + \nabla p  \big( u^{ \sigma} (s,\hat{X}_s^{i,  \sigma}  )  \big)  \Big) \, ds  \Big] \\
\leq&  2 C_{3a} \int_0^t  \mathbb{E} \big[  \underset{i\in\{1, \ldots , N\}}{\max }  |\bar{X}_s^{i, \varepsilon, \sigma} -\hat{X}_s^{i, \sigma}|^2  \big] \, ds  + 2C_{3b} \int_0^t \mathbb{E} \big[  \underset{i\in\{1, \ldots , N\}}{\max }  |\bar{X}_s^{i, \varepsilon, \sigma} -\hat{X}_s^{i, \sigma}| \big] ds\\
&  +2 C_{3d}  \int_0^t  \mathbb{E} \big[  \underset{i\in\{1, \ldots , N\}}{\max }  |\bar{X}_s^{i, \varepsilon, \sigma} -\hat{X}_s^{i, \sigma}|   \big] ds ,
\end{align*}
where
\begin{align*}
 C_{3a} =&  \|p'\|_{L^{\infty}(0, \|u^{\sigma } \|_{L^{\infty} ((0,T)\times\R^d))} )} \|D^2 u^{\sigma}\|_{L^{\infty}((0,T)\times\R^d))} + \|p''\|_{L^{\infty}(0, \|u^{\sigma } \|_{L^{\infty} ((0,T)\times\R^d))} )}  \|\nabla  u^{\sigma}\|^2_{L^{\infty}((0,T)\times\R^d))}   ,\\
C_{3b}=& \|p'\|_{L^{\infty}(0, \| u^{\sigma}\|_{L^{\infty} (0,T;  L^{\infty}(\R^d) )})  }     \|  \nabla \left( u^{ \sigma}  -  u^{ \varepsilon,  \sigma} \right)     \|_{L^{\infty}(0,T; L^{\infty}(\R^d))}\\   &+  \|\nabla u^{ \varepsilon,  \sigma}  \|_{L^{\infty}((0,T)\times\R^d)}  \|p''\|_{L^{\infty}(0,  \max\{  \|u^{\sigma}\|_{L^{\infty} ((0,T)\times\R^d)} , \|u^{\varepsilon, \sigma}\|_{L^{\infty} ((0,T)\times\R^d))} \} )  }  \|   u^{ \sigma}  -  u^{ \varepsilon,  \sigma}     \|_{L^{\infty}((0,T)\times\R^d))},  \\
 C_{3d} =&  \varepsilon_p\|p'\|_{L^{\infty}(0, \| u^{\varepsilon, \sigma}\|_{L^{\infty} (0,T; L^{\infty}(\R^d))})  }  \|D^2 u^{\varepsilon, \sigma} \|_{L^{\infty}(0,T; L^{\infty}(\R^d) )}   \\
& + \varepsilon_p \|p''\|_{L^{\infty}(0, \| u^{\varepsilon, \sigma}\|_{L^{\infty} (0,T; L^{\infty}(\R^d))})  }  \| \nabla u^{\varepsilon, \sigma} \|^2_{L^{\infty}(0,T; L^{\infty}(\R^d)) }.
\end{align*}
Combining the estimates above and the assumptions of this lemma, we obtain that there exists a constant $C>0$ such that
\begin{align*}
  \mathbb{E} \Big[ \underset{i\in\{1, \ldots , N\}}{\max }   \left|\bar{X}_t^{i, \varepsilon, \sigma} -\hat{X}_t^{i, \sigma} \right|^2 \Big]
   \leq& C \int_0^t   \mathbb{E} \Big[ \underset{i\in\{1, \ldots , N\}}{\max }  \left|\bar{X}_s^{i, \varepsilon, \sigma} -\hat{X}_s^{i, \sigma} \right|^2 \Big]  \, ds\\
  &+ C( \varepsilon_k+\varepsilon_p ) \int_0^t   \mathbb{E} \Big[ \underset{i\in\{1, \ldots , N\}}{\max }  \left|\bar{X}_s^{i, \varepsilon, \sigma} -\hat{X}_s^{i, \sigma} \right| \Big]  \, ds.
\end{align*}
Therefore, Gr\"onwall's inequality yields
\begin{align*}
\underset{t\in [0,T]}{\sup }  \mathbb{E} \Big[ \underset{i\in\{1, \ldots , N\}}{\max }  \left|\bar{X}_t^{i, \varepsilon, \sigma} -\hat{X}_t^{i, \sigma} \right|^2 \Big] \leq   C( \varepsilon_k+\varepsilon_p )^2 \ \exp\big( C T\big).
\end{align*}
\end{proof}

\section{Convergence in Expectation with the Logarithmic Cut-off}
\label{Section_conv_in_exp}

In this section, we concentrate on the estimate for the difference of the particle model \eqref{generalized_regularized_particle_model} and the intermediate particle model \eqref{generalized_intermediate_particle_model} in the case $\vartheta=d$. Similar results can be obtained for $\frac{d}{2}+1\leq\vartheta<d$. The proof in this section is directly done for mollified potential $\Phi_d^\varepsilon=\Phi_d*V^\varepsilon$, an additional cut-off such as $\bar\Phi_d$ is not needed. The estimate is obtained in the sense of expectation with the logarithmic cut-off, hence the result in Theorem \ref{coroll_X_N_and_X_hat} (a) is a direct consequence.
\begin{proposition}
	\label{theorem_log}
	Assume $\vartheta=d$, $X_t^{N,i, \varepsilon, \sigma} $ and $\bar{X}_t^{i, \varepsilon, \sigma}$ are solutions of \eqref{generalized_regularized_particle_model} and  \eqref{generalized_intermediate_particle_model} respectively, then for $\beta \in (0,1)$ and $N$ large enough there exists a  constant $C>0$ independent of $N$  such that
	\begin{align*}
	\underset{t\in [0,T]}{\sup } \underset{i\in\{1, \ldots , N\}}{\max } \mathbb{E} \big[ \big| X_t^{N,i, \varepsilon, \sigma} - \bar{X}_t^{i, \varepsilon, \sigma} \big|^2 \big] \leq CN^{-\beta},
	\end{align*}
	where the cut-off parameters are chosen such that
	\begin{align*}
	\varepsilon_k = \big( \alpha_k\ln N \big)^{-\frac{1}{d}}, \varepsilon_p = \big( \alpha_p\ln N \big)^{-\frac{1}{dm-d+2}}, \ \lambda = \frac{\varepsilon_p^d}{2}, \ \text{ with } \ 0 <
	\alpha_k + \alpha_p < \frac{1 - \delta \cdot  \frac{2dm-2d+2}{dm-d+2}  - \beta }{CT}.
	\end{align*}
	Here $\delta>0$ is an arbitrary small constant and $C$ is a positive constant which depends on $V$, $m$ and $d$.
\end{proposition}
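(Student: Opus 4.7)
The plan is to apply It\^o's formula to $|X_t^{N,i,\varepsilon,\sigma}-\bar X_t^{i,\varepsilon,\sigma}|^2$. Since \eqref{generalized_regularized_particle_model} and \eqref{generalized_intermediate_particle_model} share the same Brownian motions $B^i$ and initial data $\zeta^i$, the diffusion terms cancel and the right-hand side reduces to a pathwise Riemann integral depending solely on the two drift differences: the mollified aggregation driven by $\nabla\Phi^{\varepsilon_k}$, and the cut-off pressure driven by $\nabla p_{\lambda}(V^{\varepsilon_p}*\cdot)$. I would process both blocks in parallel, take $\max_i\mathbb{E}$, and close the resulting integral inequality by Gr\"onwall's lemma.

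For each drift the standard telescoping trick is to introduce the empirical measure $\bar\mu_t^N:=\frac{1}{N}\sum_j\delta_{\bar X_t^{j,\varepsilon,\sigma}}$ of the intermediate particles and split the difference into three pieces: an in-position Lipschitz piece bounded by $L\cdot|X_t^{N,i}-\bar X_t^{i}|$; an empirical-measure shift piece bounded by $L\cdot\frac{1}{N}\sum_j|X_t^{N,j}-\bar X_t^{j}|$, which after $\max_i\mathbb{E}$ feeds back into the Gr\"onwall loop; and a law-of-large-numbers fluctuation of $\bar\mu_t^N$ against $u^{\varepsilon,\sigma}(t,\cdot)$, handled by conditioning on $\bar X_t^i$ and exploiting that the remaining $\bar X_t^j$ are i.i.d.\ with density $u^{\varepsilon,\sigma}(t,\cdot)$. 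For the aggregation block one has $L_k\lesssim\|\nabla^2\Phi^{\varepsilon_k}\|_\infty\lesssim\varepsilon_k^{-d}$, while the chain rule $\nabla p_{\lambda}(V^{\varepsilon_p}*\mu)=p_{\lambda}'(V^{\varepsilon_p}*\mu)(\nabla V^{\varepsilon_p}*\mu)$ together with the cut-off-induced bounds $|p_{\lambda}^{(\ell)}|\lesssim\varepsilon_p^{-d(m-1-\ell)}$ (from $\lambda=\varepsilon_p^d/2$) and $\|\nabla^k V^{\varepsilon_p}\|_\infty\lesssim\varepsilon_p^{-(d+k)}$ gives $L_p\lesssim\varepsilon_p^{-(dm-d+2)}$.

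With the logarithmic scalings $\varepsilon_k=(\alpha_k\ln N)^{-1/d}$ and $\varepsilon_p=(\alpha_p\ln N)^{-1/(dm-d+2)}$, one obtains $L_k+L_p\le C(\alpha_k+\alpha_p)\ln N$, so Gr\"onwall produces the amplification factor $\exp(CT(\alpha_k+\alpha_p)\ln N)=N^{CT(\alpha_k+\alpha_p)}$. The LLN contributions are controlled by the elementary conditional variance bound $\mathbb{E}[|\frac{1}{N}\sum_j F(\bar X^i-\bar X^j)-\int F(\bar X^i-y)u^{\varepsilon,\sigma}(t,y)\,dy|^2\mid\bar X^i]\le \frac{1}{N}\|F\|_\infty^2$, applied with $F=\nabla\Phi^{\varepsilon_k}$ and with the kernels that arise from the chain-rule expansion in the pressure block. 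The dominating term is $(p_{\lambda}')^2\|\nabla V^{\varepsilon_p}\|_\infty^2/N\lesssim\varepsilon_p^{-2(dm-d+1)}/N=(\alpha_p\ln N)^{(2dm-2d+2)/(dm-d+2)}/N$, the aggregation contribution $\lesssim\varepsilon_k^{-2(d-1)}/N$ having strictly smaller logarithmic exponent since $2(d-1)/d\le 2(dm-d+1)/(dm-d+2)$ whenever $m\ge 2$. The inequality $(\ln N)^\rho\le N^{\delta\rho}$ for any fixed $\delta>0$ and $N$ large then converts the LLN prefactor into $N^{-1+\delta(2dm-2d+2)/(dm-d+2)}$, and combining with the Gr\"onwall amplification yields the claimed bound $CN^{-\beta}$ precisely under the hypothesis $CT(\alpha_k+\alpha_p)\le 1-\delta\cdot\frac{2dm-2d+2}{dm-d+2}-\beta$.

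The main obstacle is the non-linear pressure block: the chain rule must be applied both in the Lipschitz estimate and inside the LLN telescope (via a mean-value expansion of $p_{\lambda}'$), and one has to book-keep every factor of $p_{\lambda},p_{\lambda}',p_{\lambda}''$ against every factor of $V^{\varepsilon_p}$, $\nabla V^{\varepsilon_p}$, $\nabla^2 V^{\varepsilon_p}$ so that the exponents $dm-d+2$ and $2(dm-d+1)$ really emerge with the correct coefficients, which is what pins down the distinct scalings of $\varepsilon_k$ and $\varepsilon_p$. The diagonal $j=i$ term of the empirical measure is a secondary nuisance: $\nabla\Phi^{\varepsilon_k}(0)$ and $\nabla V^{\varepsilon_p}(0)$ vanish by radial symmetry of $V$, while the non-vanishing contribution $V^{\varepsilon_p}(0)/N\lesssim\varepsilon_p^{-d}/N$ that appears inside $p_{\lambda}'(V^{\varepsilon_p}*\bar\mu^N)$ is absorbed by the $L^\infty$ cut-off of $p_{\lambda}'$ and contributes only a lower-order error.
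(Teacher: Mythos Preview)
Your proposal is correct and follows essentially the same route as the paper: It\^o's formula, the same three-piece telescoping (Lipschitz shift in $X^{N,i}$, Lipschitz shift in the other particles, and LLN fluctuation of the i.i.d.\ intermediate particles against $u^{\varepsilon,\sigma}$), the chain-rule bookkeeping for the pressure yielding the Lipschitz constant $\varepsilon_p^{-(dm-d+2)}$ and LLN variance of order $\varepsilon_p^{-(2dm-2d+2)}/N$, and a Gr\"onwall closure producing the amplification $N^{CT(\alpha_k+\alpha_p)}$. The paper organises the pressure LLN slightly differently---it separates the $p_\lambda''$-piece (your mean-value expansion) from a $p_\lambda'(V^{\varepsilon_p}*u^{\varepsilon,\sigma})$-piece in which $p_\lambda'$ is evaluated at the \emph{bounded} density argument, so that the dominant $\varepsilon_p^{-(2dm-2d+2)}$ comes from the former rather than from your $(p_\lambda')^2\|\nabla V^{\varepsilon_p}\|_\infty^2$---but the resulting exponents and the final scaling condition on $\alpha_k+\alpha_p$ are identical.
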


First of all, we derive the estimate for the mollified fundamental solution $\Phi_d^\varepsilon=\Phi_d*V^\varepsilon$.
\begin{lemma}
	\label{lemma_L_infty_estimates_phi}
	There exists a positive constant $C>0$ which depends only on the given mollification kernel $V$ and $d$ such that
	\begin{align*}
	\|\nabla   \Phi_d^{\varepsilon} \|_{L^{\infty}(\R^d)} \leq \frac{C}{\varepsilon^{d-1}}, \ \ \ \ \|D^2   \Phi_d^{\varepsilon} \|_{L^{\infty}(\R^d)} \leq    \frac{C}{\varepsilon^d},  \ \ \ \ \|D^3   \Phi_d^{\varepsilon} \|_{L^{\infty}(\R^d)} \leq    \frac{C}{\varepsilon^{d+1}}.
	\end{align*}
In a similar way, we show that the results also hold for $\Phi^\varepsilon_\nu$ with $\frac{d}{2}+1\le \nu<d$ and $\bar{\Phi}_d\ast V^\varepsilon$.
\end{lemma}
\begin{proof}
	Since $\nabla \Phi_d = C_d \frac{x}{|x|^d}$, we obtain that there exists a constant $C>0$ such that
	\begin{align*}
	\int_{|y|< \varepsilon} \frac{1}{|x-y|^{d-1}}    dy &=   \int_{ {\substack{|y|< \varepsilon \\|x-y|>\varepsilon }} } \frac{1}{|x-y|^{d-1}}    dy + \int_{ {\substack{|y|< \varepsilon \\|x-y|<\varepsilon }} } \frac{1}{|x-y|^{d-1}}    dy  \\
	&\leq C \Big( \frac{1}{\varepsilon^{d-1}} Vol(|y|< \varepsilon) +  \int_{|x-y|< \varepsilon} \frac{1}{|x-y|^{d-1}} d(x-y)\Big)\\
	&\leq C   \Big( \frac{\varepsilon^d}{\varepsilon^{d-1}} + \int_0^{\varepsilon} \frac{r^{d-1}}{r^{d-1}} dr \Big)\leq C  \varepsilon,
	\end{align*}
	which implies
	\begin{align*}
	\|\nabla  \Phi_d^{\varepsilon} \|_{L^{\infty}(\R^d)} \leq C_d \frac{1}{\varepsilon^d} \Big\|\int_{|y|< \varepsilon} \frac{1}{|x-y|^{d-1}} \Big| V \Big( \frac{y}{\varepsilon} \Big) \Big|   dy\Big\|_{L^{\infty}(\R^d)} \leq  C_d \frac{ \| V \|_{L^{\infty}(\R^d)}  }{\varepsilon^d}  \varepsilon \leq \frac{C}{\varepsilon^{d-1}}.
	\end{align*}
	Using the same method we deduce that
	\begin{align*}
	\|D^2  \Phi_d^{\varepsilon} \|_{L^{\infty}(\R^d)}  \leq  C_d \frac{1}{\varepsilon^{d+1}} \Big\|\int_{|y|< \varepsilon} \frac{1}{|x-y|^{d-1}} \Big|\nabla  V \Big( \frac{y}{\varepsilon} \Big) \Big|   dy\Big\|_{L^{\infty}(\R^d)}\leq \frac{C}{\varepsilon^{d}},
	\end{align*}
	and
	\begin{align*}
	\|D^3  \Phi_d^{\varepsilon} \|_{L^{\infty}(\R^d)} \leq C_d \frac{1}{\varepsilon^{d+2}} \Big\|\int_{|y|< \varepsilon} \frac{1}{|x-y|^{d-1}} \Big|D^2 V \Big( \frac{y}{\varepsilon} \Big) \Big|   dy\Big\|_{L^{\infty}(\R^d)}\leq \frac{C}{\varepsilon^{d+1}}.
	\end{align*}
\end{proof}

Now we start with the proof of the main result of this section.

\begin{proof}[Proof of Proposition \ref{theorem_log}]
For the difference $X_t^{N,i, \varepsilon, \sigma} - \bar{X}_t^{i, \varepsilon, \sigma}$, by using  It\^o's  formula we deduce that
\begin{align*}
 &\mathbb{E}\Big[\Big| X_t^{N,i, \varepsilon, \sigma} - \bar{X}_t^{i, \varepsilon, \sigma} \Big|^2 \Big]\\
 \leq & \mathbb{E}\Big[\int_0^t 2 (X_s^{N,i, \varepsilon, \sigma} - \bar{X}_s^{i, \varepsilon, \sigma}) \Big(   \frac{1}{N} \sum_{j =1}^N \nabla \Phi_d^{\varepsilon_k}(X_s^{N,i, \varepsilon, \sigma} -X_s^{N,j, \varepsilon, \sigma}) - \nabla  \Phi_d^{\varepsilon_k}  *u^{\varepsilon, \sigma} ( s, \bar{X}_s^{i, \varepsilon, \sigma})  \Big) ds\Big] \\
 & +\mathbb{E}\Big[\int_0^t 2 (X_s^{N,i, \varepsilon, \sigma} - \bar{X}_s^{i, \varepsilon, \sigma})  \Big( -\nabla p_{\lambda} \Big(  \frac{1}{N} \sum_{j=1}^N  V^{\varepsilon_p}(X_s^{N,i, \varepsilon, \sigma} -X_s^{N,j, \varepsilon, \sigma} ) \Big)   + \nabla p_{\lambda}\big(  V^{\varepsilon_p} * u^{\varepsilon, \sigma} (s, \bar{X}_s^{i, \varepsilon, \sigma} ) \big)  \Big) \, ds\Big].
\end{align*}
We first present the estimate for the expectation of the aggregation term
\begin{align*}
&\mathbb{E} \Big[ \int_0^t 2 (X_s^{N,i, \varepsilon, \sigma} - \bar{X}_s^{i, \varepsilon, \sigma}) \Big(   \frac{1}{N} \sum_{j =1}^N \nabla \Phi_d^{\varepsilon_k}(X_s^{N,i, \varepsilon, \sigma} -X_s^{N,j, \varepsilon, \sigma}) - \nabla  \Phi_d^{\varepsilon_k}  *u^{\varepsilon, \sigma} ( s, \bar{X}_s^{i, \varepsilon, \sigma})  \Big) ds \Big]\\
\leq& \mathbb{E} \Big[ \int_0^t 2 (X_s^{N,i, \varepsilon, \sigma} - \bar{X}_s^{i, \varepsilon, \sigma}) ( A_1+A_2 )ds\Big],
\end{align*}
where
\begin{align*}
  &A_1 :=  \frac{1}{N} \sum_{j =1}^N \nabla \Phi_d^{\varepsilon_k}(X_s^{N,i, \varepsilon, \sigma} -X_s^{N,j, \varepsilon, \sigma}) -  \frac{1}{N} \sum_{j =1}^N \nabla \Phi_d^{\varepsilon_k}( \bar{X}_s^{i, \varepsilon, \sigma} - \bar{X}_s^{j, \varepsilon, \sigma} ),\\
  &A_2:= \frac{1}{N} \sum_{j =1}^N \nabla \Phi_d^{\varepsilon_k}( \bar{X}_s^{i, \varepsilon, \sigma} - \bar{X}_s^{j, \varepsilon, \sigma} ) -\nabla  \Phi_d^{\varepsilon_k}  *u^{\varepsilon, \sigma} ( s, \bar{X}_s^{i, \varepsilon, \sigma}) .
\end{align*}
Using Lemma \ref{lemma_L_infty_estimates_phi} we get the following estimate for  $A_1$
\begin{align*}
|A_1| \leq  \frac{C}{N \varepsilon_k^d}\sum_{j =1}^N \big( |X_s^{N,i, \varepsilon, \sigma}-\bar{X}_s^{i, \varepsilon, \sigma} | + |X_s^{N,j, \varepsilon, \sigma} - \bar{X}_s^{j, \varepsilon, \sigma} | \big).
\end{align*}
Combining estimates for $|A_1|$ and Young's inequality we obtain that
 \begin{align*}
 \mathbb{E} \Big[ \int_0^t 2 (X_s^{N,i, \varepsilon, \sigma} - \bar{X}_s^{i, \varepsilon, \sigma}) ( A_1)\,ds\Big]
    &\leq\frac{C}{\varepsilon_k^d} \int_0^t\max_{i\in\{1,\ldots,N\}} \mathbb{E} \left[
    \left| X_s^{N,i, \varepsilon, \sigma} - \bar{X}_s^{i, \varepsilon, \sigma} \right|^2\right]ds.
 \end{align*}
Furthermore,
 \begin{align*}
&\mathbb{E} \Big[\int_0^t 2 (X_s^{N,i, \varepsilon, \sigma} - \bar{X}_s^{i, \varepsilon, \sigma}) ( A_2)ds\Big]\\
\leq&\frac{C}{\varepsilon_k^d} \int_0^t \max_{i\in\{1,\ldots,N\}}\mathbb{E} \big[ \big| X_s^{N,i, \varepsilon, \sigma} - \bar{X}_s^{i, \varepsilon, \sigma} \big|^2\big] ds + \varepsilon_k^d\int_0^t \mathbb{E} [ |A_2 |^2 ] ds.
\end{align*}
 Now we handle the term involving $A_2$. For simplicity we introduce
 \begin{align*}
 \hat{Z}_{ij}:=   \nabla \Phi_d^{\varepsilon_k}( \bar{X}_s^{i, \varepsilon, \sigma} - \bar{X}_s^{j, \varepsilon, \sigma} ) - \nabla \Phi_d^{\varepsilon_k}  *u^{\varepsilon, \sigma} ( s, \bar{X}_s^{i, \varepsilon, \sigma}).
\end{align*}
Therefore, we obtain that
\begin{align*}
|A_2|^2  = \Big| \frac{1}{N} \sum_{j=1}^N \hat{Z}_{ij} \Big|^2= \frac{1}{N^2}\sum_{j=1}^N \sum_{k=1}^N \hat{Z}_{ij} \cdot \hat{Z}_{ik}.
\end{align*}
For $i\neq j$, $i\neq k$ and $j\neq k$, since $\{ \bar{X}_s^{j, \varepsilon, \sigma}\}_{j=1}^N$ are independent, we obtain that
\begin{align*}
&\mathbb{E}[\hat{Z}_{ij} \cdot \hat{Z}_{ik}] \\
=& \mathbb{E}[ \big( \nabla \Phi_d^{\varepsilon_k}( \bar{X}_s^{i, \varepsilon, \sigma} - \bar{X}_s^{j, \varepsilon, \sigma} )  -  \nabla \Phi_d^{\varepsilon_k}  *u^{\varepsilon, \sigma} ( s, \bar{X}_s^{i, \varepsilon, \sigma}) \big) \cdot ( \nabla \Phi_d^{\varepsilon_k}( \bar{X}_s^{i, \varepsilon, \sigma} - \bar{X}_s^{k, \varepsilon, \sigma} )  -  \nabla \Phi_d^{\varepsilon_k}  *u^{\varepsilon, \sigma} ( s, \bar{X}_s^{i, \varepsilon, \sigma}) ) ] \\
=& \int_{\R^d} \int_{\R^d} \int_{\R^d}   ( \nabla \Phi_d^{\varepsilon_k}( x-y )  -  \nabla \Phi_d^{\varepsilon_k}  *u^{\varepsilon, \sigma} ( s, x ) ) \\
& \ \ \ \ \ \ \ \ \ \ \ \ \ \ \ \  \cdot ( \nabla \Phi_d^{\varepsilon_k}( x-z )  -  \nabla \Phi_d^{\varepsilon_k}  *u^{\varepsilon, \sigma} ( s, x ) )  u^{\varepsilon, \sigma}(s,x) u^{\varepsilon, \sigma}(s,y) u^{\varepsilon, \sigma}(s,z) \, dx \, dy \, dz =0.
\end{align*}
For $i\neq j$ and $i=k$, we have
\begin{align*}
&\mathbb{E}[\hat{Z}_{ij}\cdot\hat{Z}_{ii}]\\
=& \mathbb{E}[ -( \nabla \Phi_d^{\varepsilon_k}( \bar{X}_s^{i, \varepsilon, \sigma} - \bar{X}_s^{j, \varepsilon, \sigma} )  -  \nabla \Phi_d^{\varepsilon_k}  *u^{\varepsilon, \sigma} ( s, \bar{X}_s^{i, \varepsilon, \sigma}) ) \cdot \nabla \Phi_d^{\varepsilon_k}  *u^{\varepsilon, \sigma} ( s, \bar{X}_s^{i, \varepsilon, \sigma}) ]\\
=&0.
\end{align*}
Similarly, for $i=j$ and $i\neq k$
\begin{align*}
\mathbb{E}[\hat{Z}_{ii}\cdot\hat{Z}_{ik}]=0.
\end{align*}
Therefore, it holds
\begin{align}
\label{indep_Z_aggreg}
 \mathbb{E}[ |A_2|^2] =  \frac{1}{N^2} \mathbb{E} \Big[ \sum_{j=1}^N \sum_{k=1}^N \hat{Z}_{ij} \cdot \hat{Z}_{ik} \Big]
 \leq \frac{1}{N^2}\mathbb{E}\Big[\sum^N_{j=1}|\hat{Z}_{ij}|^2 \Big]+\frac{1}{N^2}\mathbb{E}[|\hat{Z_{ii}}|^2]
  \leq  \frac{C}{N \varepsilon_k^{2(d-1)}}.
\end{align}
In the last estimate the following inequality has been used
   \begin{align*}
   \hat{Z}_{ij}^2 &= \big(  \nabla \Phi_d^{\varepsilon_k}( \bar{X}_s^{i, \varepsilon, \sigma} - \bar{X}_s^{j, \varepsilon, \sigma} ) - \nabla \Phi_d^{\varepsilon_k}  *u^{\varepsilon, \sigma} ( s, \bar{X}_s^{i, \varepsilon, \sigma})  \big)^2  \leq  \frac{C}{\varepsilon_k^{2(d-1)}}.
   \end{align*}
As a summary, the expectation for the aggregation term can be estimated in the following
\begin{align}
\label{estimate_Keller_Segel}
&\mathbb{E} \Big[ \int_0^t 2 (X_s^{N,i, \varepsilon, \sigma} - \bar{X}_s^{i, \varepsilon, \sigma}) \Big(   \frac{1}{N} \sum_{j =1}^N \nabla \Phi_d^{\varepsilon_k}(X_s^{N,i, \varepsilon, \sigma} -X_s^{N,j, \varepsilon, \sigma}) - \nabla  \Phi_d^{\varepsilon_k}  *u^{\varepsilon, \sigma} ( s, \bar{X}_s^{i, \varepsilon, \sigma})  \Big) ds \Big]\notag\\
  \leq&  \frac{C}{\varepsilon_k^d}   \int_0^t \underset{i\in\{1, \ldots , N\}}{\max }  \mathbb{E} \Big[ \left| X_s^{N,i, \varepsilon, \sigma} - \bar{X}_s^{i, \varepsilon, \sigma} \right|^2 \Big]   ds  + \frac{Ct}{ N \varepsilon_k^{d-2}}.
\end{align}

Next we study the estimates for the diffusion term. Due to the nonlinear composition, this term is much more complicated than the aggregation term. More precisely, the difference can be splitted into the following differences
 \begin{align*}
 & - \nabla  p_{\lambda} \Big( \frac{1}{N} \sum_{j=1}^N  V^{\varepsilon_p}(X_s^{N,i, \varepsilon, \sigma} -X_s^{N,j, \varepsilon, \sigma} )  \Big)  +  \nabla  p_{\lambda}  \big(  V^{\varepsilon_p} * u^{\varepsilon, \sigma} (s,  \bar{X}_s^{i, \varepsilon, \sigma}) \big)  =: I_1+I_2,
 \end{align*}
 where
 \begin{align*}
   I_1=& \Big( p'_{\lambda}\Big( \frac{1}{N}\sum_{j=1}^N  V^{\varepsilon_p}(\bar{X}_s^{i, \varepsilon, \sigma}  -\bar{X}_s^{j, \varepsilon, \sigma} )  \Big)  - p'_{\lambda}\Big( \frac{1}{N} \sum_{j=1}^N  V^{\varepsilon_p}(X_s^{N,i, \varepsilon, \sigma} -X_s^{N,j, \varepsilon, \sigma} ) \Big)  \Big) \frac{1}{N}\sum_{j=1}^N  \nabla V^{\varepsilon_p}(\bar{X}_s^{i, \varepsilon, \sigma}  -\bar{X}_s^{j, \varepsilon, \sigma} ) \\
   & + p'_{\lambda}\Big( \frac{1}{N} \sum_{j=1}^N  V^{\varepsilon_p}(X_s^{N,i, \varepsilon, \sigma} -X_s^{N,j, \varepsilon, \sigma} ) \Big) \Big(\frac{1}{N}\sum_{j=1}^N  \big(\nabla V^{\varepsilon_p}(\bar{X}_s^{i, \varepsilon, \sigma}  -\bar{X}_s^{j, \varepsilon, \sigma} )   -    \nabla V^{\varepsilon_p}(X_s^{N,i, \varepsilon, \sigma} -X_s^{N,j, \varepsilon, \sigma} ) \big) \Big),
\end{align*}
and
\begin{align*}
 I_2=& \Big( p'_{\lambda} ( V^{\varepsilon_p} * u^{\varepsilon, \sigma} (s, \bar{X}_s^{i, \varepsilon, \sigma} ) - p_{\lambda}'  \Big( \frac{1}{N}  \sum_{j=1}^N  V^{\varepsilon_p}(\bar{X}_s^{i, \varepsilon, \sigma}  -\bar{X}_s^{j, \varepsilon, \sigma} )  \Big) \Big)  \frac{1}{N}\sum_{j=1}^N  \nabla V^{\varepsilon_p}(\bar{X}_s^{i, \varepsilon, \sigma}  -\bar{X}_s^{j, \varepsilon, \sigma} ) \\
 &+ \Big(  \nabla  V^{\varepsilon_p} * u^{\varepsilon, \sigma} (s, \bar{X}_s^{i, \varepsilon, \sigma}  ) -  \frac{1}{N}  \sum_{j=1}^N  \nabla V^{\varepsilon_p}(\bar{X}_s^{i, \varepsilon, \sigma}  -\bar{X}_s^{j, \varepsilon, \sigma} ) \Big)  p_{\lambda}' \big(  V^{\varepsilon_p} * u^{\varepsilon, \sigma} (s, \bar{X}_s^{i, \varepsilon, \sigma}  ) \big)\\
 =:&I_{2,1} +I_{2,2}.
  \end{align*}
The term  $I_1$ is bounded as follows
    \begin{align*}
    |I_1| &\leq C_{I_1}  \Big(  \big| \bar{X}_s^{i, \varepsilon, \sigma}  - X_s^{N,i, \varepsilon, \sigma} \big| + \frac{1}{N} \sum_{j=1}^N \big| X_s^{N,j, \varepsilon, \sigma} - \bar{X}_s^{j, \varepsilon, \sigma} \big| \Big),
    \end{align*}
where
\begin{align*}
C_{I_1} :=    \frac{\| \nabla V\|^2_{L^{\infty}(\R^d)}}{\varepsilon_p^{2(d+1)} }  \ \|p_{\lambda}''\|_{L^{\infty} (0, \| V^{\varepsilon_p}\|_{L^{\infty}(\R^d)})}  +  \|p_{\lambda}'\|_{L^{\infty} (0, \| V^{\varepsilon_p}\|_{L^{\infty}(\R^d)})}   \frac{\|D^2 V\|_{L^{\infty}(\R^d)}}{\varepsilon_p^{d+2}}.
\end{align*}
This gives immediately
 \begin{align*}
 \mathbb{E} \Big[ \int_0^t  2( X_s^{N,i, \varepsilon, \sigma} - \bar{X}_s^{i, \varepsilon, \sigma} )  I_1\, ds \Big] \leq  4 C_{I_1}
 \int_0^t \underset{i\in\{1, \ldots , N\}}{\max }\mathbb{E} \big[  \big| X_s^{N,i, \varepsilon, \sigma} - \bar{X}_s^{i, \varepsilon, \sigma} \big| ^2 \big]ds .
 \end{align*}
Next, we estimate the terms with $I_{2,1}$ and $I_{2,2}$.
  \begin{align*}
  & \big| \bar{X}_s^{i, \varepsilon, \sigma} -  X_s^{N,i, \varepsilon, \sigma} \big| \   |I_{2,1}|  \\
   \leq&  \big|\bar{X}_s^{i, \varepsilon, \sigma} -  X_s^{N,i, \varepsilon, \sigma} \big|  \frac{\|\nabla V \|_{L^{\infty}(\R^d)}}{\varepsilon_p^{d+1}}  \|p_{\lambda}''\|_{L^{\infty}(0, \|V^{\varepsilon_p} \|_{L^{\infty}(\R^d)}) }   \cdot \Big| V^{\varepsilon_p} * u^{\varepsilon, \sigma} (s, \bar{X}_s^{i, \varepsilon, \sigma} ) - \frac{1}{N}  \sum_{j=1}^N  V^{\varepsilon_p}(\bar{X}_s^{i, \varepsilon, \sigma}  -\bar{X}_s^{j, \varepsilon, \sigma} ) \Big|\\
   \leq& \frac{\|\nabla V \|_{L^{\infty}(\R^d)}}{\varepsilon_p^{d+1}}  \|p_{\lambda}''\|_{L^{\infty}(0, \|V^{\varepsilon_p} \|_{L^{\infty}(\R^d)}) } \left| \bar{X}_s^{i, \varepsilon, \sigma} -  X_s^{N,i, \varepsilon, \sigma} \right| \   \frac{1}{N}  \bigg| \sum_{j=1}^N \tilde{Z}_{ij} \bigg|\\
   \leq&    \big| \bar{X}_s^{i, \varepsilon, \sigma} -  X_s^{N,i, \varepsilon, \sigma} \big|^2 +  \frac{\|\nabla V \|_{L^{\infty}(\R^d)}^2}{4\varepsilon_p^{2(d+1)}}  \|p_{\lambda}''\|^2_{L^{\infty}(0, \|V^{\varepsilon_p} \|_{L^{\infty}(\R^d)}) }  \frac{1}{N^2}   \sum_{j=1}^N \sum_{k=1}^N \tilde{Z}_{ij} \tilde{Z}_{ik},
  \end{align*}
 where $\tilde{Z}_{ij} := V^{\varepsilon_p} * u^{\varepsilon, \sigma} (s, \bar{X}_s^{i, \varepsilon, \sigma} ) - V^{\varepsilon_p}(\bar{X}_s^{i, \varepsilon, \sigma}  -\bar{X}_s^{j, \varepsilon, \sigma} ) $.
  Similarly as in \eqref{indep_Z_aggreg} we deduce that
   \begin{align*}
    \mathbb{E} \Big[ \sum_{j=1}^N \sum_{k=1}^N \tilde{Z}_{ij} \tilde{Z}_{ik} \Big]  \leq  CN\frac{\|V\|^2_{L^{\infty}(\R^d)}}{\varepsilon_p^{2d}},
   \end{align*}
where the following fact has been used in the last estimate
   \begin{align*}
   \tilde{Z}_{ij}^2 &= \big( V^{\varepsilon_p} * u^{\varepsilon, \sigma} (s, \bar{X}_s^{i, \varepsilon, \sigma} ) - V^{\varepsilon_p}(\bar{X}_s^{i, \varepsilon, \sigma}  -\bar{X}_s^{j, \varepsilon, \sigma} )  \big)^2  \leq C  \frac{\|V\|^2_{L^{\infty}(\R^d)}}{\varepsilon_p^{2d}}. \ \
   \end{align*}
We obtain that
   \begin{align*}
  &  \mathbb{E} \Big[ \int_0^t 2(\bar{X}_s^{i, \varepsilon, \sigma} -  X_s^{N,i, \varepsilon, \sigma} )   I_{2,1} \, ds \Big]\\
   \leq&  \int_0^t \mathbb{E} \bigg[ \left| \bar{X}_s^{i, \varepsilon, \sigma} -  X_s^{N,i, \varepsilon, \sigma} \right|^2 \bigg] \, ds +  C   \frac{\|\nabla V \|_{L^{\infty}(\R^d)}^2}{\varepsilon_p^{2(2d+1)}}  \|p_{\lambda}''\|_{L^{\infty}(0, \|V^{\varepsilon_p} \|_{L^{\infty}(\R^d)}) }^2 \frac{1}{N}    \|V\|^2_{L^{\infty}(\R^d)}  t.
   \end{align*}

The procedure to estimate $\mathbb{E} \big[ \int_0^t 2(\bar{X}_s^{i, \varepsilon, \sigma} -  X_s^{N,i, \varepsilon, \sigma} )   I_{2,2} \, ds \big]$ is similar to the one used for the estimation of $\mathbb{E} \big[ \int_0^t 2(\bar{X}_s^{i, \varepsilon, \sigma} -  X_s^{N,i, \varepsilon, \sigma} )   I_{2,1} \, ds \big]$, namely we obtain that
\begin{align*}
&\mathbb{E} \Big[ \int_0^t 2(\bar{X}_s^{i, \varepsilon, \sigma} -  X_s^{N,i, \varepsilon, \sigma} )   I_{2,2} \, ds \Big]\\
\leq& \int_0^t \mathbb{E} \bigg[ \left| \bar{X}_s^{i, \varepsilon, \sigma} -  X_s^{N,i, \varepsilon, \sigma} \right|^2 \bigg] \, ds +      C\|p_{\lambda}'\|_{L^{\infty}(0, \| u^{\varepsilon, \sigma} \|_{L^{\infty}((0,T)\times \R^d)}) }^2 \frac{1}{N}    \frac{\|\nabla V\|^2_{L^{\infty}(\R^d)}}{\varepsilon_p^{2(d+1)}}t.
\end{align*}
Combining the above estimates we obtain that
\begin{align*}
\mathbb{E} \Big[ \int_0^t 2(\bar{X}_s^{i, \varepsilon, \sigma} -  X_s^{N,i, \varepsilon, \sigma} )   I_2\, ds \Big]  \leq  \int_0^t \mathbb{E} \bigg[ \left| \bar{X}_s^{i, \varepsilon, \sigma} -  X_s^{N,i, \varepsilon, \sigma} \right|^2 \bigg] \, ds  + C_{I_2},
\end{align*}
where
\begin{align*}
C_{I_2}&:=  \frac{C t  \|\nabla V \|_{L^{\infty}(\R^d)}^2}{N \varepsilon_p^{2(d+1)}}  \Big( \frac{1}{\varepsilon_p^{2d}}  \|p_{\lambda}''\|_{L^{\infty}(0, \|V^{\varepsilon_p} \|_{L^{\infty}(\R^d)}) }^2 \|V\|^2_{L^{\infty}(\R^d)}  +  \|p_{\lambda}'\|_{L^{\infty}(0, \| u^{\varepsilon, \sigma} \|_{L^{\infty}((0,T)\times \R^d)}) }^2  \Big) .
\end{align*}
Therefore the estimates for the non-linear diffusion term is
  \begin{align}
  \label{estimate_Porous_Medium}
   &\mathbb{E} \Big[  \int_0^t 2 (X_s^{N,i, \varepsilon, \sigma} - \bar{X}_s^{i, \varepsilon, \sigma})
   \Big( \nabla  p_{\lambda} \big(  V^{\varepsilon_p} * u^{\varepsilon, \sigma} (s, \bar{X}_s^{i, \varepsilon, \sigma}  )\big)  - \nabla  p_{\lambda}  \Big( \frac{1}{N} \sum_{j=1}^N  V^{\varepsilon_p}(X_s^{N,i, \varepsilon, \sigma} -X_s^{N,j, \varepsilon, \sigma} ) \Big) \Big)   \, ds \Big]  \notag\\
   \leq& (4C_{I_1} +2) \int_0^t \underset{i\in\{1, \ldots , N\}}{\max }\mathbb{E} \bigg[  \left| X_s^{N,i, \varepsilon, \sigma} - \bar{X}_s^{i, \varepsilon, \sigma} \right| ^2 \bigg]ds + C_{I_2}.
  \end{align}


Combining the estimates in \eqref{estimate_Keller_Segel}   and  \eqref{estimate_Porous_Medium} we obtain that
\begin{align*}
\underset{i\in\{1, \ldots , N\}}{\max }  \mathbb{E} \left[ \left| X_t^{N,i, \varepsilon, \sigma} - \bar{X}_t^{i, \varepsilon, \sigma} \right|^2 \right]
 \leq&  \frac{C }{\varepsilon_k^d} \int_0^t \underset{i\in\{1, \ldots , N\}}{\max }  \mathbb{E} \left[ \left| X_s^{N,i, \varepsilon, \sigma} - \bar{X}_s^{i, \varepsilon, \sigma} \right|^2 \right]   ds  + \frac{Ct}{ N \varepsilon_k^{d-2}} \\
&+ (4C_{I_1} +2) \int_0^t \underset{i\in\{1, \ldots , N\}}{\max }\mathbb{E} \left[  \left| X_s^{N,i, \varepsilon, \sigma} - \bar{X}_s^{i, \varepsilon, \sigma} \right| ^2 \right]ds + C_{I_2}\\
\leq& C_{G_1} \int_0^t \underset{i\in\{1, \ldots , N\}}{\max }\mathbb{E} \left[  \left| X_s^{N,i, \varepsilon, \sigma} - \bar{X}_s^{i, \varepsilon, \sigma} \right| ^2 \right]ds + C_{G_2},
\end{align*}
 and the constants are given in the following
  \begin{align*}
  C_{G_1} &:= \frac{C}{\varepsilon_k^d}  + 4 C_{I_1} +2 \\
  &=  \frac{C}{\varepsilon_k^d} + 4 \frac{\| \nabla V\|^2_{L^{\infty}(\R^d)}}{\varepsilon_p^{2(d+1)} } \|p''_\lambda\|_{L^{\infty}(0, \|V^{\varepsilon_p} \|_{L^{\infty}(\R^d)}) } +  4\|p'_\lambda\|_{L^{\infty}(0, \|V^{\varepsilon_p} \|_{L^{\infty}(\R^d)}) } \frac{\|D^2 V\|_{L^{\infty}(\R^d)}}{\varepsilon_p^{d+2}} +2,
  \end{align*}
and
\begin{align*}
C_{G_2} &:=  \frac{Ct}{ N \varepsilon_k^{d-2}} + C_{I_2}\\
& =\frac{Ct}{ N \varepsilon_k^{d-2}} +  \frac{C t  \|\nabla V \|_{L^{\infty}(\R^d)}^2}{N \varepsilon_p^{2(d+1)}}  \Big( \frac{1}{\varepsilon_p^{2d}}  \|p''_\lambda\|_{L^{\infty}(0, \|V^{\varepsilon_p} \|_{L^{\infty}(\R^d)}) }^2 \|V\|^2_{L^{\infty}(\R^d)}  +  \|p'_\lambda\|_{L^{\infty}(0, \| u^{\varepsilon, \sigma} \|_{L^{\infty}((0,T)\times \R^d)}) }^2  \Big).
\end{align*}
Now we apply Grönwall's inequality, which results in
  \begin{align*}
  \underset{t\in [0,T]}{\sup } \underset{i\in\{1, \ldots , N\}}{\max } \mathbb{E} \big[ \big| X_t^{N,i, \varepsilon, \sigma} - \bar{X}_t^{i, \varepsilon, \sigma} \big|^2 \big] \leq C_{G_2} \ \exp\big( C_{G_1} T\big).
\end{align*}
By choosing $\lambda = \frac{\varepsilon^d_p}{2}$, we obtain	
\begin{align*}
&\|p'_\lambda\|_{L^{\infty}(0, \|V^{\varepsilon_p} \|_{L^{\infty}(\R^d)}) } \leq  C(m) \|V^{\varepsilon_p} \|_{L^{\infty}(\R^d)}^{m-2}  = C \frac{\|V \|_{L^{\infty}(\R^d)}^{m-2}}{\varepsilon_p^{d(m-2)}}, \\
&\|p''_\lambda\|_{L^{\infty}(0, \|V^{\varepsilon_p} \|_{L^{\infty}(\R^d)}) } \leq  C(m) \|V^{\varepsilon_p} \|_{L^{\infty}(\R^d)}^{m-3}  = C \frac{\|V \|_{L^{\infty}(\R^d)}^{m-3}}{\varepsilon_p^{d(m-3)}},
\end{align*}
which consequently implies that
\begin{align*}
& C_{G_1} \leq C\Big(  1+ \frac{1}{\varepsilon_k^d} +  \frac{1}{\varepsilon_p^{2(d+1)} }  \ \frac{1}{\varepsilon_p^{d(m-3)} } +  \frac{1}{\varepsilon_p^{d(m-2)} } \frac{1}{\varepsilon_p^{d+2}} \Big) \leq C \Big( \frac{1}{\varepsilon_k^d} +\frac{1}{\varepsilon_p^{dm-d+2}}  \Big)   \\
 &C_{G_2} \leq C \Big( \frac{t}{ N \varepsilon_k^{d-2}} +  \frac{ t }{N \varepsilon_p^{2(d+1)}}  \Big( \frac{1}{\varepsilon_p^{2d}}  \frac{1}{\varepsilon_p^{2d(m-3)} }   +  1 \Big) \Big) \leq C \Big( \frac{t}{ N \varepsilon_k^{d-2}} + \frac{t}{ N \varepsilon_p^{2dm -2d+2}} \Big)
\end{align*}
for $m\geq 3$ or $m=2$. Therefore, the following estimate holds
\begin{align*}
  \underset{t\in [0,T]}{\sup } \underset{i\in\{1, \ldots , N\}}{\max } \mathbb{E} \big[ \big| X_t^{N,i, \varepsilon, \sigma} - \bar{X}_t^{i, \varepsilon, \sigma} \big|^2 \big] \leq CT \Big( \frac{1}{ N \varepsilon_k^{d-2}} + \frac{1}{ N \varepsilon_p^{2dm -2d+2}} \Big)  \ \exp\Big( CT \Big( \frac{1}{\varepsilon_k^d} +\frac{1}{\varepsilon_p^{dm-d+2}}  \Big) \Big).
\end{align*}
It remains to prove that for $0<\beta<1$ it holds that
\begin{align*}
  CT \Big( \frac{1}{ N \varepsilon_k^{d-2}} + \frac{1}{ N \varepsilon_p^{2dm -2d+2}} \Big)  \ \exp\Big( CT \Big( \frac{1}{\varepsilon_k^d} +\frac{1}{\varepsilon_p^{dm-d+2}}  \Big) \Big) = O(N^{-\beta}) \ \text{as} \ N \rightarrow \infty.
  \end{align*}
For $\varepsilon_k = \big( \alpha_k{\ln N} \big)^{-\frac{1}{d}}$ and $\varepsilon_p = \big(\alpha_p \ln N \big)^{-\frac{1}{dm-d+2}} $  where $\alpha_k, \alpha_p >0$,
it holds
\begin{align*}
\frac{1}{N \varepsilon_k^{d-2}} \exp\Big(CT \Big( \frac{1}{\varepsilon_k^d} + \frac{1}{\varepsilon_p^{dm-d+2}}\Big)  \Big) &= (\alpha_k)^{\frac{d-2}{d}} (\ln N)^{\frac{d-2}{d}}N^{-1 + CT ( \alpha_k +  \alpha_p)}
\end{align*}
and
\begin{align*}
\frac{1}{N \varepsilon_p^{2dm-2d+2}} \exp\Big( CT \Big( \frac{1}{\varepsilon_k^d} + \frac{1}{\varepsilon_p^{dm-d+2}}\Big)  \Big) &= (\alpha_p)^{\frac{2dm-2d+2}{dm-d+2}} (\ln N)^{\frac{2dm-2d+2}{dm-d+2}} N^{-1  + CT (\alpha_k + \alpha_p) }.
\end{align*}
Since $\frac{d-2}{d} \leq \frac{2dm-2d+2}{dm-d+2}$, and for arbitrary $0<\delta\ll 1$ it holds that $\ln N\leq N^\delta$ for big enough $N$, it is enough to choose $\alpha_k$ and $\alpha_p$ such that
\begin{align*}
-1 + \delta \cdot  \frac{2dm-2d+2}{dm-d+2} + CT (\alpha_k +  \alpha_p)   <-\beta,
\end{align*}
which is exactly the assumption for $\alpha_k$ and $\alpha_p$ in Proposition \ref{theorem_log}.
\end{proof}

\begin{proof}
	[Proof of Theorem \ref{coroll_X_N_and_X_hat} (a)] The result follows directly from Propositions \ref{lemma_X_bar_X_hat} and \ref{theorem_log} by using the following triangle inequality,
	$$
	| X_t^{N,i, \varepsilon, \sigma} - \hat{X}_t^{i, \varepsilon, \sigma} |\leq | X_t^{N,i, \varepsilon, \sigma} - \bar{X}_t^{i, \varepsilon, \sigma} | + | \bar{X}_t^{i, \varepsilon, \sigma} - \hat{X}_t^{i, \varepsilon, \sigma}|, \quad\forall i\in\{1,\ldots, N\}.
	$$
\end{proof}

\section{Convergence in Probability with the Algebraic cut-off}
\label{Section_conv_in_prob}
In this section, we prove the key technical results in this paper. Since the logarithmic cut-off scaling obtained in the previous section is too weak to describe the singularity of the interaction potential in the particle level, an algebraic cut-off should be considered to capture  the singularity in the microscopic level. In this meaning, a convergence result in the expectation is too strong to be expected. Instead, we will prove the convergence in the sense of probability.

\begin{proposition}[Error estimates between \eqref{generalized_regularized_particle_model} and \eqref{generalized_intermediate_particle_model} in the algebraic scaling]
	\label{theorem_algebraic}
	Assume $\frac{d}{2}+1\leq \vartheta\leq d$, $\varepsilon_k \geq N^{-\beta_k}$, $\varepsilon_p \geq(\ln N)^{-\frac{1}{2+d(2+|m-3|)}}$, $\lambda = \frac{\varepsilon_p^d}{2}$, $0<\beta_k<\frac{1}{2(\vartheta+1)}$, and $X_t^{N,i, \varepsilon, \sigma} $ and $\bar{X}_t^{i, \varepsilon, \sigma}$ are solutions of \eqref{generalized_regularized_particle_model} and  \eqref{generalized_intermediate_particle_model} respectively,
	then for an arbitrary $\gamma>0$ there exists a constant $C(\gamma)>0$, which is independent of $N$, such that
	\begin{align*}
		&\sup_{0\leq t\leq T} \mathbb{P} \Big( \max_{i \in\{1, \ldots , N\} } \left| X_t^{N,i, \varepsilon, \sigma} - \bar{X}_t^{i, \varepsilon, \sigma} \right| >N^{-a} \Big) \leq C(\gamma) N^{-\gamma},
	\end{align*}
	where $a<\frac12$.
\end{proposition}

In order to prove this proposition, we need the following three lemmas.
The first one is a special version of the law of large numbers.
Although its proof can be found in \cite[Section 3]{CHJ}, we present the full proof for the convenience of the reader.
\begin{lemma} [A special version of the law of large numbers]
	\label{lemma_estim_prob_bad_sets}
	Let $(\bar{Y}^i)_{i \in \{1, \ldots, N \}}$ be a collection of i.i.d. random  variable with the  density function $v\in L^1(\R^d)$, $U \in L^{\infty}(\R^d) $ be a given function.  Define
	\begin{align*}
		&\mathcal{A}_{\theta}^i(U,v) := \Big\{ \omega \in \Omega :  \Big| \frac{1}{N} \sum_{j =1}^N U( \bar{Y}^i- \bar{Y}^j ) - U *v(\bar{Y}^i)   \Big| > \frac{1}{N^{\theta}}  \Big\},\\
		&\mathcal{A}_{\theta}^N(U,v) := \bigcup_{i=1}^N\mathcal{A}_{\theta}^i(U,v).
	\end{align*}
Then for arbitrary $\tilde{\kappa}\in \N$  and $\theta\in (0, \frac{1}{2})$, it holds
	\begin{align*}
		\mathbb{P}(\mathcal{A}_{\theta}^N(U,v)) \leq N \max_{i \in \{ 1, \ldots N \}}\mathbb{P}(\mathcal{A}_{\theta}^i(U,v)) \leq N^{2 \tilde{\kappa} (\theta-\frac{1}{2}) +1} C(\tilde{\kappa}) \left(  \|  U  \|_{ L^{\infty}(\R^d)}^{2\tilde{\kappa}} +  \|  U *v \|_{L^{\infty}(\R^d)}^{2\tilde{\kappa}} \right).
	\end{align*}
\end{lemma}

\begin{proof} First of all, the Markov's inequality implies that
	\begin{align*}
		\mathbb{P}(\mathcal{A}_{\theta}^i(U,v))  \leq N^{2\tilde{\kappa}\theta} \mathbb{E} \Big[ \Big| \frac{1}{N} \sum_{j =1}^N U( \bar{Y}^i- \bar{Y}^j ) - U *v(\bar{Y}^i)  \Big|^{2\tilde{\kappa}} \Big] =  N^{2\tilde{\kappa}\theta} \mathbb{E} \Big[ \Big(   \frac{1}{N^2}  \sum_{j,l =1}^N  h (\bar{Y}^i, \bar{Y}^j)  h (\bar{Y}^i, \bar{Y}^l) \Big)^{\tilde{\kappa}} \Big],
	\end{align*}
	where $\displaystyle  h (\bar{Y}^i, \bar{Y}^j):=   U( \bar{Y}^i- \bar{Y}^j ) - U *v(\bar{Y}^i) $.
	We now look at terms in $\mathbb{E}\big[\big(\sum_{j,l = 1}^N h (\bar{Y}^i, \bar{Y}^j) h (\bar{Y}^i, \bar{Y}^l) \big)^{\tilde{\kappa}}\big]$ where one index $j\in \{1,\ldots,N\}$ only appears once:
	\begin{align*}
		&\mathbb{E}\bigg[ h (\bar{Y}^i, \bar{Y}^j) \prod_{\stackrel{m=1}{\ell_m\not= j}}^{2\tilde{\kappa}-1}h (\bar{Y}^i, \bar{Y}^{\ell_m})\bigg] =\int_{\R^d} v(x)   \int_{\R^d} v(y) h(x,y) \, dy \,  \mathbb{E} \bigg[ \prod_{\stackrel{m=1}{\ell_m\not= j}}^{2\tilde{\kappa}-1}h (x, \bar{Y}^{\ell_m})  \bigg]  \, d x =0,
	\end{align*}
where we have used the fact that
	\begin{align*}
		\int_{\R^d} v(y) h(x,y) \, dy
		&=  \int_{\R^d} v(y) U(x-y ) \, dy - \int_{\R^d }\int_{\R^d} v(y) U(x-z)v(z) \, dz   \, dy=0.
	\end{align*}
	In order to estimate $\mathbb{E}\big[\big(\sum_{j,l = 1}^N h (\bar{Y}^i, \bar{Y}^j) h (\bar{Y}^i, \bar{Y}^l) \big)^{\tilde{\kappa}}\big]$, we need an upper bound for the terms which do not vanish. These terms have the following form:
	\begin{align*}
		\mathcal{N}:= \Big\{\prod_{j=1}^{2\tilde{\kappa}} h (\bar{Y}^i, \bar{Y}^{i_j}) :~i_j \in \{1,\ldots,N\}\text{  such that all appearing indices $i_j$ those appear at least twice}\Big\}.
	\end{align*}
	So, the number of elements in $\mathcal{N}$ is bounded by
	$C(\tilde \kappa)N^{\tilde{\kappa}}$. Together with the estimate
	$$
	\mathbb{E}\Big(\prod_{j=1}^{2\tilde{\kappa}} h (\bar{Y}^i, \bar{Y}^{i_j}) \Big) \leq C(\tilde\kappa)\left(  \|  U  \|_{ L^{\infty}(\R^d)}^{2\tilde{\kappa}} +  \|  U *v \|_{ L^{\infty}(\R^d)}^{2\tilde{\kappa}} \right),
	$$
	we obtain
	\begin{align*}
		\mathbb{P}(\mathcal{A}_{\theta}^i(U))  &\leq    N^{2\tilde{\kappa}\theta} \mathbb{E} \bigg[ \Big(   \frac{1}{N^2}  \sum_{j,l =1}^N  h (\bar{Y}^i, \bar{Y}^j) h (\bar{Y}^i, \bar{Y}^l) \Big)^{\tilde{\kappa}} \bigg]  \\
		&\leq N^{2 \tilde{\kappa} \theta } \frac{1}{N^{2\tilde{\kappa}}} C(\tilde{\kappa}) N^{\tilde{\kappa}} \big(  \|  U  \|_{ L^{\infty}(\R^d)}^{2\tilde{\kappa}} +  \|  U *v \|_{ L^{\infty}(\R^d)}^{2\tilde{\kappa}} \big)\\
		&= N^{2 \tilde{\kappa} (\theta-\frac{1}{2}) } C(\tilde{\kappa}) \big(  \|  U  \|_{ L^{\infty}(\R^d)}^{2\tilde{\kappa}} +  \|  U *v \|_{ L^{\infty}(\R^d)}^{2\tilde{\kappa}} \big).
	\end{align*}
	This implies the desired result.
\end{proof}

The following uniform in $\varepsilon$ estimates are important in the discussion of this section.
\begin{lemma}
\label{add_regular_u_eps_sigma}
For $w \in L^1(\R^d)\cap L^\infty(\R^d)$, $\frac{d}{2}+1\leq\vartheta<d$, there exists constant $C$ independent of $\varepsilon$ such that
\begin{align*}
\big\|  |D^2\Phi_\vartheta^{\varepsilon}| * w \big\|_{L^{\infty}(\R^d)}+\big\|  |\nabla \Phi_\vartheta^{\varepsilon}| * w\big\|_{L^{\infty}(\R^d)}+\big\|  |\nabla \Phi_d^{\varepsilon}| * w\big\|_{L^{\infty}(\R^d)}\leq C,
\end{align*}
and furthermore, we have
\begin{align*}
\big\|  |D^2\Phi_d^{\varepsilon}| * w \big\|_{L^{\infty}(\R^d)}\leq -C\ln \varepsilon,
\end{align*}
\end{lemma}
\begin{proof}
	Due to the definition of $\Phi_\vartheta$, we have the following estimate
	\begin{align*}
	&\Big|	\int_{\R^d}|D^2\Phi^\varepsilon_\vartheta|(z)w(x-z)dz\Big|\leq \int_{\R^d} \int_{\R^d}V^\varepsilon (y)|D^2\Phi_\vartheta(z-y)| |w(x-z)|dydz\\
	\leq &\int_{\R^d}\int_{\R^d} V^\varepsilon (y)\frac{C(d,\vartheta)}{|z-y|^\vartheta}|w(x-z)|dydz =\int_{\R^d}\int_{\R^d} V^\varepsilon (y)\frac{C(d,\vartheta)}{|z-y|^\vartheta}(\mathbb{I}_{|z-y|\leq 1}+\mathbb{I}_{|y-z|>1})|w(x-z)|dydz\\
	\leq & C \|V^\varepsilon\|_{L^1(\R^d)}\|w\|_{L^\infty(\R^d)} + C\|V^\varepsilon\|_{L^1(\R^d)}\|w\|_{L^1(\R^d)}\leq C.
	\end{align*}
The estimate for $\big\|  |\nabla \Phi_\vartheta^{\varepsilon}| * w\big\|_{L^{\infty}(\R^d)}$ and $\big\|  |\nabla \Phi_d^{\varepsilon}| * w\big\|_{L^{\infty}(\R^d)}$ can be obtained similarly.
Further the definition of $\Phi^\varepsilon_d$ implies that for small $\varepsilon$,
\begin{align*}
&\Big|	\int_{\R^d}|D^2\Phi^\varepsilon_d|(z)w(x-z)dz\Big|\\
\leq& \int_{\R^d} \int_{|y|>1}V^\varepsilon (z-y)|D^2\bar\Phi_d(y)| |w(x-z)|dydz
 +\int_{\R^d} \int_{\varepsilon<|y|\leq 1}V^\varepsilon (z-y)|D^2\bar\Phi_d(y)| |w(x-z)|dydz\\
\leq &C(d)\|V^\varepsilon\|_{L^1(\R^d)}\|w\|_{L^1(\R^d)}+\int_{\R^d} \int_{\varepsilon<|y|\leq 1}V^\varepsilon (z-y)\frac{C(d)}{|y|^d} |w(x-z)|dydz\\
\leq & C-C \|V^\varepsilon\|_{L^1(\R^d)}\|w\|_{L^\infty(\R^d)} \ln \varepsilon \leq -C\ln\varepsilon,
\end{align*}
where we have used $\||y|^{-d}\mathbb{I}_{\{\varepsilon<|y|\le 1 \}}\|_{L^1(\mathbb{R}^d)}=C\int^1_\varepsilon \frac{r^{d-1}}{r^{d}}dr=-C\ln \varepsilon$.
\end{proof}

The following estimates for the mollified interaction potential will be used frequently in the rest of this section.
\begin{lemma}
	\label{lemma_L_infty_estimates_phi_theta}
	There exists a positive constant $C>0$ which depends only on the given mollification kernel $V$ and $d$ such that, for any $\frac{d}{2}+1\le\nu\le d$,
	\begin{align*}
	\|\nabla   \Phi_{\vartheta}^{\varepsilon} \|_{L^{\infty}(\R^d)} \leq \frac{C}{\varepsilon^{\vartheta-1}}, \ \ \ \ \|D^2   \Phi_{\vartheta}^{\varepsilon} \|_{L^{\infty}(\R^d)} \leq    \frac{C}{\varepsilon^\vartheta},  \ \ \ \ \|D^3   \Phi_{\vartheta}^{\varepsilon} \|_{L^{\infty}(\R^d)} \leq    \frac{C}{\varepsilon^{\vartheta+1}}.
	\end{align*}
\end{lemma}
\begin{proof}
	Since $\nabla \Phi_{\vartheta} = C_{d,\vartheta}\frac{x}{|x|^\vartheta}$, we obtain that there exists a constant $C>0$ such that
	\begin{align*}
	\int_{|y|< \varepsilon} \frac{1}{|x-y|^{\vartheta-1}}    dy &=   \int_{ {\substack{|y|< \varepsilon \\|x-y|>\varepsilon }} } \frac{1}{|x-y|^{\vartheta-1}}    dy + \int_{ {\substack{|y|< \varepsilon \\|x-y|<\varepsilon }} } \frac{1}{|x-y|^{\vartheta-1}}    dy  \\
	&\leq C \Big( \frac{1}{\varepsilon^{\vartheta-1}} Vol(|y|< \varepsilon) +  \int_{|x-y|< \varepsilon} \frac{1}{|x-y|^{\vartheta-1}} d(x-y)\Big)\\
	&\leq C   \Big( \frac{\varepsilon^d}{\varepsilon^{\vartheta-1}} + \int_0^{\varepsilon} \frac{r^{d-1}}{r^{\vartheta-1}} dr \Big)\leq C  \varepsilon^{d-\vartheta+1},
	\end{align*}
	which implies for $\frac{d}{2}+1\leq\vartheta< d$ that
	\begin{align*}
	\|\nabla  \Phi_\vartheta*V^{\varepsilon} \|_{L^{\infty}(\R^d)}
	\leq \frac{C}{\varepsilon^d} \Big\|\int_{|y|< \varepsilon} \frac{1}{|x-y|^{\vartheta-1}} \Big| V \Big( \frac{y}{\varepsilon} \Big) \Big|   dy\Big\|_{L^{\infty}(\R^d)} \leq  C \frac{ \| V \|_{L^{\infty}(\R^d)}  }{\varepsilon^d}  \varepsilon^{d-\vartheta+1} \leq \frac{C}{\varepsilon^{\vartheta-1}},
	\end{align*}
and for $\vartheta= d$
\begin{align*}
\|\nabla  \Phi_d^{\varepsilon} \|_{L^{\infty}(\R^d)} \leq
\|\nabla  \Phi_d\ast V^{\varepsilon} \|_{L^{\infty}(\R^d)}	
+\|(\nabla  \Phi_d\mathbb{I}_{B_\varepsilon(0)})\ast V^{\varepsilon} \|_{L^{\infty}(\R^d)}
\leq \frac{C}{\varepsilon^{d-1}}.
\end{align*}
	Using the same method we deduce that
	\begin{align*}
	\|D^2  \Phi_\vartheta^{\varepsilon} \|_{L^{\infty}(\R^d)}  \leq  C \frac{1}{\varepsilon^{d+1}} \Big\|\int_{|y|< \varepsilon} \frac{1}{|x-y|^{\vartheta-1}} \Big|\nabla  V \Big( \frac{y}{\varepsilon} \Big) \Big|   dy\Big\|_{L^{\infty}(\R^d)}\leq \frac{C}{\varepsilon^{\vartheta}},
	\end{align*}
	and
	\begin{align*}
	\|D^3  \Phi_\vartheta^{\varepsilon} \|_{L^{\infty}(\R^d)} \leq C \frac{1}{\varepsilon^{d+2}} \Big\|\int_{|y|< \varepsilon} \frac{1}{|x-y|^{\vartheta-1}} \Big|D^2 V \Big( \frac{y}{\varepsilon} \Big) \Big|   dy\Big\|_{L^{\infty}(\R^d)}\leq \frac{C}{\varepsilon^{\vartheta+1}}.
	\end{align*}
\end{proof}

\begin{proof}[Proof of Proposition \ref{theorem_algebraic}]
We consider, for an appropriate  $a\in (0, \frac{1}{2})$ to be chosen later, the following stopping time and the stopped process
\begin{align*}
&\tau(\omega)= \inf \Big\{ t\in (0,T) \Big|  \max_{i \in \{1, \ldots, N \} } \Big| X_t^{N,i, \varepsilon, \sigma} - \bar{X}_t^{i, \varepsilon, \sigma} \Big| \geq N^{-a}  \Big\}, \\
&  S_t(\omega)= N^{ a \kappa } \max_{i \in\{1, \ldots , N\}} \left| X_{t\wedge \tau}^{N,i, \varepsilon, \sigma} - \bar{X}_{t\wedge \tau}^{i, \varepsilon, \sigma} \right|^{\kappa}, \quad \mbox{ where } \kappa>0 \mbox{ is an arbitrary given number}.
\end{align*}
Thanks to the stopping time, we have $S_t\le 1$.
By applying Markov's inequality we get that
\begin{align*}
&\sup_{0\leq t\leq T} \mathbb{P} \Big( \max_{i \in\{1, \ldots , N\}} \big| X_t^{N,i, \varepsilon, \sigma} - \bar{X}_t^{i, \varepsilon, \sigma} \big| >N^{-a} \Big) \leq  \sup_{0\leq t\leq T}  \mathbb{P} \left( S_t =1 \right) \leq \sup_{0\leq t\leq T} \mathbb{E} \left[ S_t \right].
\end{align*}
The main task in the following is to estimate $\sup_{0\leq t\leq T}\mathbb{E}[S_t]$. Within this proof we use $C$ as a generic constant {which} depends only on $\kappa$, $m$, and $T$, which might change line by line.

Recall the problems \eqref{generalized_regularized_particle_model} and \eqref{generalized_intermediate_particle_model}, since the initial data of $X_t^{N,i, \varepsilon, \sigma} $ and $\bar{X}_t^{i, \varepsilon, \sigma}$ are given by the same random variables, we deduce that
  \begin{align*}
 &\mathbb{E}[S_t] =N^{ a \kappa } \mathbb{E}\Big[\max_{i \in\{1, \ldots , N\}} \left| X_{t\wedge \tau}^{N,i, \varepsilon, \sigma} - \bar{X}_{t\wedge \tau}^{i, \varepsilon, \sigma} \right|^{\kappa} \Big] \leq  C \mathbb{E}[I_1 + I_2 + II_1 + II_2],
 \end{align*}
where
 \begin{align*}
 &I_1 =   N^{a} \max_{i \in\{1, \ldots , N\}} \int_0^{t\wedge \tau} \Big| \frac{1}{N} \sum_{j =1}^N  \big( \nabla \Phi_\vartheta^{\varepsilon_k}(X_s^{N,i, \varepsilon, \sigma} -X_s^{N,j, \varepsilon, \sigma}) - \nabla \Phi_\vartheta^{\varepsilon_k}( \bar{X}_s^{i, \varepsilon, \sigma}- \bar{X}_s^{j, \varepsilon, \sigma} )   \big)    \Big|\cdot S_s^\frac{\kappa-1}{\kappa}   ds, \\
&I_2 =   N^{a}  \max_{i \in\{1, \ldots , N\}} \int_0^{t\wedge \tau} \Big| \frac{1}{N} \sum_{j =1}^N  \nabla \Phi_\vartheta^{\varepsilon_k}( \bar{X}_s^{i, \varepsilon, \sigma}- \bar{X}_s^{j, \varepsilon, \sigma} ) - \nabla  \Phi_\vartheta^{\varepsilon_k}  *u^{\varepsilon, \sigma} ( s, \bar{X}_s^{i, \varepsilon, \sigma})     \Big|\cdot S_s^\frac{\kappa-1}{\kappa}   ds,\\
&II_1 =   N^{a}  \max_{i \in\{1, \ldots , N\}}   \int_0^{t\wedge \tau} \Big| \nabla p_{\lambda} \Big(  \frac{1}{N} \sum_{j=1}^N  V^{\varepsilon_p}(X_s^{N,i, \varepsilon, \sigma} -X_s^{N,j, \varepsilon, \sigma} ) \Big)  - \nabla p_{\lambda} \Big(  \frac{1}{N} \sum_{j=1}^N  V^{\varepsilon_p}( \bar{X}_s^{i, \varepsilon, \sigma}- \bar{X}_s^{j, \varepsilon, \sigma}  ) \Big)   \Big|\cdot S_s^\frac{\kappa-1}{\kappa} ds, \\
&II_2 =   N^{a } \max_{i \in\{1, \ldots , N\}}   \int_0^{t\wedge \tau} \Big| \nabla p_{\lambda} \Big(  \frac{1}{N} \sum_{j=1}^N  V^{\varepsilon_p}( \bar{X}_s^{i, \varepsilon, \sigma}- \bar{X}_s^{j, \varepsilon, \sigma}  ) \Big)  - \nabla p_{\lambda}\big(  V^{\varepsilon_p} * u^{\varepsilon, \sigma} (s, \bar{X}_s^{i, \varepsilon, \sigma} ) \big) \Big|\cdot S_s^\frac{\kappa-1}{\kappa}  ds.
 \end{align*}

Now we estimate the right-hand side in expectation term by term.

\underline{\textit{Step 1 Estimation of the aggregation terms $\mathbb{E}[I_1]+\mathbb{E}[I_2]$}.}

Due to the lack of Lipschitz continuity of $\nabla \Phi_\vartheta^{\varepsilon_k}$, the key idea in this step is to expand the difference in $I_1$ at the point $\bar{X}_s^{i, \varepsilon, \sigma}- \bar{X}_s^{j, \varepsilon, \sigma}$ by using Taylor's expansion. And then using the law of large numbers from Lemma \ref{lemma_estim_prob_bad_sets} to estimate $ D^2  \Phi_\vartheta^{\varepsilon_k}( \bar{X}_s^{i, \varepsilon, \sigma}- \bar{X}_s^{j, \varepsilon, \sigma} ) $ as a replacement of the Lipschitz constant. More precisely,  we have
\begin{align*}
I_1 &\leq C  N^{a }  \max_{i \in\{1, \ldots , N\}} \int_0^{t\wedge \tau} \Big| \frac{1}{N} \sum_{j =1}^N   D^2  \Phi_\vartheta^{\varepsilon_k}( \bar{X}_s^{i, \varepsilon, \sigma}- \bar{X}_s^{j, \varepsilon, \sigma} )    \left( X_s^{N,i, \varepsilon, \sigma} - \bar{X}_s^{i, \varepsilon, \sigma}  - X_s^{N,j, \varepsilon, \sigma} + \bar{X}_s^{j, \varepsilon, \sigma}      \right) \Big|\cdot S_s^\frac{\kappa-1}{\kappa}   ds\\
&\quad + C N^{a }  \max_{i \in\{1, \ldots , N\}} \int_0^{t\wedge \tau}  \frac{1}{N}  \sum_{j =1}^N  \| D^3 \Phi_\vartheta^{\varepsilon_k} \|_{L^{\infty}( \R^d)} \left| X_s^{N,i, \varepsilon, \sigma} - \bar{X}_s^{i, \varepsilon, \sigma}  - X_s^{N,j, \varepsilon, \sigma} + \bar{X}_s^{j, \varepsilon, \sigma}   \right|^{2} \cdot S_s^\frac{\kappa-1}{\kappa}  ds\\
&\leq CN^{a } \int_0^{t\wedge \tau} \max_{i \in\{1, \ldots , N\}} \Big( \frac{1}{N} \sum_{j =1}^N  \left| D^2  \Phi_\vartheta^{\varepsilon_k}( \bar{X}_s^{i, \varepsilon, \sigma}- \bar{X}_s^{j, \varepsilon, \sigma} ) \right|  \Big)    \max_{j \in\{1, \ldots , N\}}  \left| X_s^{N,j, \varepsilon, \sigma} - \bar{X}_s^{j, \varepsilon, \sigma}   \right| \cdot S_s^\frac{\kappa-1}{\kappa}   ds \\
&\quad + C \int_0^{t\wedge \tau}  \| D^3 \Phi_\vartheta^{\varepsilon_k} \|_{L^{\infty}( \R^d)}  \max_{i \in\{1, \ldots , N\}}  \left| X_s^{N,i, \varepsilon, \sigma} - \bar{X}_s^{i, \varepsilon, \sigma}    \right|  S_s^\frac{1}{\kappa}  \cdot S_s^\frac{\kappa-1}{\kappa}  ds\\
&\leq C\int_0^{t\wedge \tau} \max_{i \in\{1, \ldots , N\}} \Big( \frac{1}{N} \sum_{j =1}^N  \left| D^2  \Phi_\vartheta^{\varepsilon_k}( \bar{X}_s^{i, \varepsilon, \sigma}- \bar{X}_s^{j, \varepsilon, \sigma} ) \right|  \Big)    S_s ds \\
&\quad + C  N^{-a }   \int_0^{t\wedge \tau}  \| D^3 \Phi_\vartheta^{\varepsilon_k} \|_{L^{\infty}( \R^d)}   S_s   ds.
\end{align*}
For the first integral, we estimate its expectation by using Lemma \ref{lemma_estim_prob_bad_sets} in the following
\begin{align*}
&\mathbb{E}\bigg[\int_0^{t\wedge \tau} \max_{i \in\{1, \ldots , N\}} \Big( \frac{1}{N} \sum_{j =1}^N  \left| D^2  \Phi_\vartheta^{\varepsilon_k}( \bar{X}_s^{i, \varepsilon, \sigma}- \bar{X}_s^{j, \varepsilon, \sigma} ) \right|  \Big)   S_s ds\bigg]\\
 \leq&\, \mathbb{E}\bigg[ \int_0^{t\wedge \tau} \max_{i \in\{1, \ldots , N\}}  \left|  |D^2 \Phi_\vartheta^{\varepsilon_k} | *u^{\varepsilon, \sigma} ( s , \bar{X}_s^{i, \varepsilon, \sigma})    \right| S_s ds \bigg]  \\
&+  \mathbb{E}\bigg[ \int_0^{t\wedge \tau} \max_{i \in\{1, \ldots , N\}}  \Big| \frac{1}{N} \sum_{j =1}^N   |D^2  \Phi_\vartheta^{\varepsilon_k}( \bar{X}_s^{i, \varepsilon, \sigma}- \bar{X}_s^{j, \varepsilon, \sigma} )| - |D^2 \Phi_\vartheta^{\varepsilon_k}|  *u^{\varepsilon, \sigma} ( s , \bar{X}_s^{i, \varepsilon, \sigma})    \Big|S_s ds \bigg]\\
\leq&\, \mathbb{E}\bigg[ \int_0^{t\wedge \tau} \max_{i \in\{1, \ldots , N\}}  \left|  |D^2 \Phi_\vartheta^{\varepsilon_k} | *u^{\varepsilon, \sigma} ( s , \bar{X}_s^{i, \varepsilon, \sigma})    \right| S_s ds \bigg]  \\
&+  \mathbb{E}\bigg[ \int_0^{t\wedge \tau} \max_{i \in\{1, \ldots , N\}}  \Big| \frac{1}{N} \sum_{j =1}^N   |D^2  \Phi_\vartheta^{\varepsilon_k}( \bar{X}_s^{i, \varepsilon, \sigma}- \bar{X}_s^{j, \varepsilon, \sigma} )| - |D^2 \Phi_\vartheta^{\varepsilon_k}|  *u^{\varepsilon, \sigma} ( s , \bar{X}_s^{i, \varepsilon, \sigma})    \Big|\mathbb{I}_{(\mathcal{A}_{0}^{(1)})^c(s)}S_s ds \bigg]\\
&+  \mathbb{E}\bigg[ \int_0^{t\wedge \tau} \max_{i \in\{1, \ldots , N\}}  \Big| \frac{1}{N} \sum_{j =1}^N   |D^2  \Phi_\vartheta^{\varepsilon_k}( \bar{X}_s^{i, \varepsilon, \sigma}- \bar{X}_s^{j, \varepsilon, \sigma} )| - |D^2 \Phi_\vartheta^{\varepsilon_k}|  *u^{\varepsilon, \sigma} ( s , \bar{X}_s^{i, \varepsilon, \sigma})    \Big|\mathbb{I}_{\mathcal{A}_{0}^{(1)}(s)}S_s ds \bigg]\\
\leq& C \int_0^t \Big( \big\|  |D^2\Phi_\vartheta^{\varepsilon_k}| * u^{\varepsilon, \sigma}  \big\|_{L^{\infty}(0,T; L^{\infty}(\R^d))} +1 \Big) \mathbb{E}[S_s] ds \\
&+C \Big(  \|  D^2 \Phi_\vartheta^{\varepsilon_k}   \|_{L^{\infty}(\R^d)}  + \big\|  |D^2 \Phi_\vartheta^{\varepsilon_k}| * u^{\varepsilon, \sigma}  \big\|_{ L^{\infty} ( 0,T; L^{\infty}(\R^d))} \Big)  \int_0^t  \mathbb{P}( \mathcal{A}_{0}^{(1)}(s) ) ds,
\end{align*}
where in Lemma \ref{lemma_estim_prob_bad_sets} the i.i.d. variables are chosen as
\begin{align*}
\mathcal{A}_{0}^{(1)}(s) := \mathcal{A}_{0}^N(|D^2  \Phi_\vartheta^{\varepsilon_k} (\cdot)|,u^{\varepsilon, \sigma}(s, \cdot))\ \text{with} \ (\bar{Y}^i)_{i \in \{1, \ldots, N \}} = (\bar{X}_s^{i, \varepsilon, \sigma})_{i \in \{1, \ldots, N \}}.
\end{align*}
So, together with Lemma \ref{add_regular_u_eps_sigma}, we obtain the following bound for $\mathbb{E}[I_1]$:
\begin{align}
\label{I1}
\mathbb{E} \left[ I_1 \right]
\leq& C \Big( -\ln \varepsilon_{k}  + \| D^3 \Phi_\vartheta^{\varepsilon_k} \|_{L^{\infty}( \R^d)}   \frac{1}{N^{ a}}  \Big) \int_0^t \mathbb{E}\left[  S_s \right] ds   + C    \|  D^2 \Phi_\vartheta^{\varepsilon_k}   \|_{L^{\infty}(\R^d)}      \int_0^t  \mathbb{P}( \mathcal{A}_{0}^{(1)}(s) )    ds,
\end{align}
where the constant with $-\ln \varepsilon_{k}$ is only needed for the case $\vartheta=d$, while for $\frac{d}{2}+1\leq\vartheta<d$ it is a uniform constant.

The estimate for $\mathbb{E}[I_2]$ is a direct application of Lemma \ref{lemma_estim_prob_bad_sets} by choosing
\begin{align*}
\mathcal{A}_{\theta_{(2)}}^{(2)}(s) := \mathcal{A}_{\theta_{(2)}}^N(\nabla  \Phi_\vartheta^{\varepsilon_k} (\cdot),u^{\varepsilon, \sigma}(s, \cdot))\ \text{with} \ (\bar{Y}^i)_{i \in \{1, \ldots, N \}} = (\bar{X}_s^{i, \varepsilon, \sigma})_{i \in \{1, \ldots, N \}}.
\end{align*}
Namely, we have
\begin{align}
\label{I2}
\mathbb{E}[I_2]  \leq& \int_0^t \mathbb{E}\left[  S_s \right] ds \nonumber\\
& + C N^{a \kappa }\mathbb{E} \Big[ \max_{i \in\{1, \ldots , N\}}   \int_0^{t\wedge \tau}  \Big| \frac{1}{N} \sum_{j =1}^N  \nabla \Phi_\vartheta^{\varepsilon_k}( \bar{X}_s^{i, \varepsilon, \sigma}- \bar{X}_s^{j, \varepsilon, \sigma} ) - \nabla  \Phi_\vartheta^{\varepsilon_k}  *u^{\varepsilon, \sigma} ( s, \bar{X}_s^{i, \varepsilon, \sigma})     \Big|^\kappa ds\Big] \nonumber\\
\leq& \int_0^t \mathbb{E}\left[  S_s \right] ds+C N^{(a -\theta_{(2)} ) \kappa} +  CN^{a \kappa}   \|  \nabla \Phi_\vartheta^{\varepsilon_k} \|_{ L^{\infty}(\R^d)}^{\kappa}  \int_0^t  \mathbb{P}(   \mathcal{A}_{\theta_{(2)}}^{(2)}(s)  ) ds.
\end{align}
Therefore, by choosing $\varepsilon_k \geq N^{- \beta_k}$ in \eqref{I1} and \eqref{I2} and applying Lemma \ref{lemma_L_infty_estimates_phi} we derive the estimate for the aggregation term
\begin{align}
\label{main_est_aggreg_term}
\mathbb{E} [I_1+I_2  ] \notag 
\leq& C \big( 1+\ln N+ N^{(\vartheta+1)\beta_k - a } \big) \int_0^t \mathbb{E}\left[  S_s \right] ds
+ C N^{ \kappa (a  -  \theta_{(2)} )  } \notag \\
&+ C  N^{\vartheta\beta_k }    \int_0^t  \mathbb{P}( \mathcal{A}_{0}^{(1)}(s) )   ds
 + C    N^{ \kappa ( (\vartheta-1) \beta_k +a )}    \int_0^t \mathbb{P}(   \mathcal{A}_{\theta_{(2)}}^{(2)}(s)  ) ds,
\end{align}
where we have used the estimates in Lemma \ref{lemma_L_infty_estimates_phi_theta}.

\underline{\textit{Step 2 Estimation of the porous medium terms $\mathbb{E}[II_1]+\mathbb{E}[II_2]$ .}}

A further decomposition $II_1$ by triangle inequality implies that
\begin{align}
\label{II1}
II_{1} &\leq C N^{\alpha}\max_{i\in\{1,\ldots,N \}} \Big(  \int_0^{t \wedge \tau} |A_{1,1}| \cdot S_s^\frac{\kappa-1}{\kappa} ds
+ \int_0^{t \wedge \tau} |A_{1,2}| \cdot S_s^\frac{\kappa-1}{\kappa} ds \Big),
\end{align}
where
\begin{align*}
   A_{1,1}=& \Big( p'_{\lambda}\Big( \frac{1}{N}\sum_{j=1}^N  V^{\varepsilon_p}(\bar{X}_s^{i, \varepsilon, \sigma}  -\bar{X}_s^{j, \varepsilon, \sigma} )  \Big)  - p'_{\lambda}\Big( \frac{1}{N} \sum_{j=1}^N  V^{\varepsilon_p}(X_s^{N,i, \varepsilon, \sigma} -X_s^{N,j, \varepsilon, \sigma} ) \Big)  \Big)
   \frac{1}{N}\sum_{j=1}^N  \nabla V^{\varepsilon_p}(\bar{X}_s^{i, \varepsilon, \sigma}  -\bar{X}_s^{j, \varepsilon, \sigma} ),  \\
   A_{1,2}=& \Big( \frac{1}{N}\sum_{j=1}^N  \big(\nabla V^{\varepsilon_p}(\bar{X}_s^{i, \varepsilon, \sigma}  -\bar{X}_s^{j, \varepsilon, \sigma} )
    -    \nabla V^{\varepsilon_p}(X_s^{N,i, \varepsilon, \sigma} -X_s^{N,j, \varepsilon, \sigma} )\big)  \Big)
    \cdot   p'_{\lambda}\Big( \frac{1}{N} \sum_{j=1}^N  V^{\varepsilon_p}(X_s^{N,i, \varepsilon, \sigma} -X_s^{N,j, \varepsilon, \sigma} ) \Big).
\end{align*}
It can be exactly estimated like before in the following
\begin{align}\label{estim_A_1}
\mathbb{E}[II_{1}] &\leq C_{I_1} \int_0^t \mathbb{E}\left[  S_s \right] ds
\end{align}
where
\begin{align*}
C_{I_1} :=    \frac{\| \nabla V\|^2_{L^{\infty}(\R^d)}}{\varepsilon_p^{2(d+1)} }  \ \|p_{\lambda}''\|_{L^{\infty} (0, \| V^{\varepsilon_p}\|_{L^{\infty}(\R^d)})}  +  \|p_{\lambda}'\|_{L^{\infty} (0, \| V^{\varepsilon_p}\|_{L^{\infty}(\R^d)})}   \frac{\|D^2 V\|_{L^{\infty}(\R^d)}}{\varepsilon_p^{d+2}}\leq C \varepsilon_p^{-2-d(2+|m-3|)}.
\end{align*}
The term $II_2$ includes only the i.i.d. random processes $\{\bar{X}_s^{i, \varepsilon, \sigma}\}_{i=1}^N$, therefore the main technique is the domain decomposition of $\Omega$ and the application of the law of large numbers in Lemma \ref{lemma_estim_prob_bad_sets}.
\begin{align}
\label{II2}\mathbb{E}[II_{2}] &\leq \int_0^t \mathbb{E}\left[  S_s \right] ds\\
\nonumber &\quad + N^{a\kappa } \max_{i \in\{1, \ldots , N\}}   \int_0^{t\wedge \tau} \Big| \nabla p_{\lambda} \Big(  \frac{1}{N} \sum_{j=1}^N  V^{\varepsilon_p}( \bar{X}_s^{i, \varepsilon, \sigma}- \bar{X}_s^{j, \varepsilon, \sigma}  ) \Big)  - \nabla p_{\lambda}\big(  V^{\varepsilon_p} * u^{\varepsilon, \sigma} (s, \bar{X}_s^{i, \varepsilon, \sigma} ) \big) \Big|^{\kappa}  ds\\
&\nonumber\leq \int_0^t \mathbb{E}\left[  S_s \right] ds+
C\mathbb{E} \Big[ N^{a \kappa} \max_{i \in\{1, \ldots , N\}} \int_0^{t\wedge \tau} |A_{2,1}|^{\kappa} ds \Big]
+C\mathbb{E} \Big[ N^{a \kappa} \max_{i \in\{1, \ldots , N\}} \int_0^{t\wedge \tau} |A_{2,2}|^{\kappa} ds \Big],
\end{align}
where
\begin{align}
&A_{2,1}:= \bigg( p'_{\lambda} ( V^{\varepsilon_p} * u^{\varepsilon, \sigma} (s, \bar{X}_s^{i, \varepsilon, \sigma} ) - p_{\lambda}'  \Big( \frac{1}{N}  \sum_{j=1}^N  V^{\varepsilon_p}(\bar{X}_s^{i, \varepsilon, \sigma}  -\bar{X}_s^{j, \varepsilon, \sigma} )  \Big) \bigg)  \frac{1}{N}\sum_{j=1}^N  \nabla V^{\varepsilon_p}(\bar{X}_s^{i, \varepsilon, \sigma}  -\bar{X}_s^{j, \varepsilon, \sigma} ), \\
&A_{2,2}:= \Big(  \nabla  V^{\varepsilon_p} * u^{\varepsilon, \sigma} (s, \bar{X}_s^{i, \varepsilon, \sigma}  ) -  \frac{1}{N}  \sum_{j=1}^N  \nabla V^{\varepsilon_p}(\bar{X}_s^{i, \varepsilon, \sigma}  -\bar{X}_s^{j, \varepsilon, \sigma} ) \Big)  p_{\lambda}' \big(  V^{\varepsilon_p} * u^{\varepsilon, \sigma} (s, \bar{X}_s^{i, \varepsilon, \sigma}  ) \big).
\end{align}

We apply Lemma \ref{lemma_estim_prob_bad_sets} and the mean value theorem in the following
\begin{align}\label{A21}
&\mathbb{E} \Big[ N^{a \kappa} \max_{i \in\{1, \ldots , N\}} \int_0^{t\wedge \tau} |A_{2,1}|^{\kappa} ds \Big] \notag\\
\leq& C  \varepsilon_p^{-d \kappa |m-3|} \varepsilon_p^{-(d+1)\kappa} \cdot N^{a \kappa}  \cdot  \int_0^t \mathbb{E} \Big[   \max_{i \in\{1, \ldots , N\}} \Big|  \frac{1}{N}\sum_{j=1}^N  V^{\varepsilon_p}(\bar{X}_s^{i, \varepsilon, \sigma}  -\bar{X}_s^{j, \varepsilon, \sigma} )    -   V^{\varepsilon_p} * u^{\varepsilon, \sigma} (s, \bar{X}_s^{i, \varepsilon, \sigma}  )  \Big|^{\kappa} \Big] ds  \notag\\
\leq& C  \Big( \varepsilon_p^{-d |m-3|-(d+1)} \cdot  N^{a}\Big)^\kappa \Big( N^{-\theta_{(3)} \kappa} +  \varepsilon_p^{- d \kappa} \int_0^t  \mathbb{P}(\mathcal{A}_{ \theta_{(3)} }^{(3)}(s) ) ds  \Big),
\end{align}
where $\mathcal{A}_{\theta_{(3)}}^{(3)}(s) := \mathcal{A}_{\theta_{(3)}}^N(  V^{\varepsilon_p} (\cdot),u^{\varepsilon, \sigma}(s, \cdot))$.
Similarly, since $\| p_{\lambda}' \big(  V^{\varepsilon_p} * u^{\varepsilon, \sigma})\|_{L^\infty(0,T;L^\infty(\R^d))}\leq C$ we have
\begin{align}\label{A22}
\mathbb{E} \Big[ N^{a \kappa} \max_{i \in\{1, \ldots , N\}} \int_0^{t\wedge \tau} |A_{2,2}|^{\kappa} ds \Big]
\leq C   N^{a \kappa} \Big( N^{-\theta_{(4)} \kappa} +  \varepsilon_p^{- (d+1)\kappa} \int_0^t \mathbb{P}( \mathcal{A}_{\theta_{(4)}}^{(4)}(s) )  ds \Big),
\end{align}
where $\mathcal{A}_{\theta_{(4)}}^{(4)}(s) := \mathcal{A}_{ \theta_{(4)} }^N(\nabla  V^{\varepsilon_p} (\cdot),u^{\varepsilon, \sigma}(s, \cdot))$.

Plugging \eqref{A21}-\eqref{A22} into \eqref{II2}, we infer that
\begin{align}
\label{estim_A_2}
\mathbb{E} [II_2] \leq & \int_0^t \mathbb{E}\left[  S_s \right] ds+C \bigg( \Big( \varepsilon_p^{-d |m-3|-(d+1)} \cdot  N^{a-\theta_{(3)}}\Big)^\kappa+  N^{ \kappa( a -\theta_{(4)} )  }  \bigg) \notag \\
&+ C    \Big( \varepsilon_p^{-d |m-3|-(2d+1)} \cdot  N^{a}\Big)^\kappa  \int_0^t \mathbb{P}(\mathcal{A}_{ \theta_{(3)} }^{(3)}(s) )ds
+ C N^{a \kappa}  \varepsilon_p^{- (d+1)\kappa} \int_0^t \mathbb{P}( \mathcal{A}_{\theta_{(4)}}^{(4)}(s) )  ds.
\end{align}
Combining \eqref{estim_A_1} and \eqref{estim_A_2}, we derive the estimate for the non-linear term

\begin{align}
\label{estim_porous_med_conv_in_prob}
& \mathbb{E} \Big[ II_1+ II_2\Big]\leq  C\varepsilon_p^{-2-d(2+|m-3|)}\int_0^t \mathbb{E}\left[  S_s \right] ds+C \bigg( \Big( \varepsilon_p^{-d |m-3|-(d+1)} \cdot  N^{a-\theta_{(3)}}\Big)^\kappa+  N^{ \kappa( a -\theta_{(4)} )  }  \bigg) \notag \\
&+ C    \Big( \varepsilon_p^{-d |m-3|-(2d+1)} \cdot  N^{a}\Big)^\kappa  \int_0^t \mathbb{P}(\mathcal{A}_{ \theta_{(3)} }^{(3)}(s) )ds
+ C N^{a \kappa}  \varepsilon_p^{- (d+1)\kappa} \int_0^t \mathbb{P}( \mathcal{A}_{\theta_{(4)}}^{(4)}(s) )  ds.
\end{align}

\underline{\textit{Step 3 Final estimation for $\mathbb{E}[S_t]$. }}

Combining estimates from steps 1 and 2 we obtain that
\begin{align}
\label{final_estimate_con_in_prob}
\mathbb{E}[S_t] \leq & C \big( \ln N+ N^{(\vartheta+1)\beta_k - a } +\varepsilon_p^{-2-d(2+|m-3|)}\big) \int_0^t \mathbb{E}\left[  S_s \right] ds\nonumber\\
&+ C \bigg(N^{ \kappa (a  -  \theta_{(2)} )  }  + \Big( \varepsilon_p^{-d |m-3|-(d+1)} \cdot  N^{a-\theta_{(3)}}\Big)^\kappa+  N^{ \kappa( a -\theta_{(4)} )  } \bigg) \nonumber\\
&+ C  N^{\vartheta\beta_k }    \int_0^t  \mathbb{P}( \mathcal{A}_{0}^{(1)}(s) )   ds
+ C    N^{ \kappa ( (\vartheta-1) \beta_k +a )}    \int_0^t \mathbb{P}(   \mathcal{A}_{\theta_{(2)}}^{(2)}(s)  ) ds\nonumber\\
&+ C    \Big( \varepsilon_p^{-d |m-3|-(2d+1)} \cdot  N^{a}\Big)^\kappa  \int_0^t \mathbb{P}(\mathcal{A}_{ \theta_{(3)} }^{(3)}(s) )ds
+ C N^{a \kappa}  \varepsilon_p^{- (d+1)\kappa} \int_0^t \mathbb{P}( \mathcal{A}_{\theta_{(4)}}^{(4)}(s) )  ds.
\end{align}
In order to use Gr\"onwall's inequality, we need to fix the relation among $\beta_k$, $\varepsilon_p$, and $a$ such that the inequality implies that $\sup_{0\leq t\leq T}\mathbb{E}[S_t]$ is bounded uniformly. Namely we need to make sure that the coefficients of $\int_0^t  \mathbb{E} [S_s] ds$ have to be bounded at most by $\ln N$, the constant terms in the last line have to give arbitrary rate, and the probabilities of ``bad sets'' $\mathcal{A}$ are small enough.

The following restrictions have to be true
\begin{align}\label{restr_a_keller_segel_E}
&(\vartheta+1)\beta_k \leq  a, \quad \varepsilon_p^{-2-d(2+|m-3|)}\leq \ln N, \quad a< \theta_{(2)}< \frac{1}{2}, \quad a<\theta_{(3)}< \frac{1}{2}, \quad a< \theta_{(4)}< \frac{1}{2}.
\end{align}
Then for an arbitrary $\tilde\gamma>0$  we can choose $\kappa>0$ such that
\begin{align*}
N^{ \kappa (a  -  \theta_{(2)} )  }  + \Big( \varepsilon_p^{-d |m-3|-(d+1)} \cdot  N^{a-\theta_{(3)}}\Big)^\kappa+  N^{ \kappa( a -\theta_{(4)} )  }
\leq C N^{-\tilde \gamma}.
\end{align*}
For the probabilities of ``bad sets'' $\mathcal{A}$, we apply Lemma \ref{lemma_estim_prob_bad_sets} and obtain by choosing $\varepsilon_k= N^{-\beta_k}$ that
\begin{align*}
&C  N^{\vartheta\beta_k }    \int_0^t  \mathbb{P}( \mathcal{A}_{0}^{(1)}(s) )   ds
+ C    N^{ \kappa ( (\vartheta-1) \beta_k +a )}    \int_0^t \mathbb{P}(   \mathcal{A}_{\theta_{(2)}}^{(2)}(s)  ) ds
\nonumber\\
\nonumber&+ C    \Big( \varepsilon_p^{-d |m-3|-(2d+1)} \cdot  N^{a}\Big)^\kappa  \int_0^t \mathbb{P}(\mathcal{A}_{ \theta_{(3)} }^{(3)}(s) )ds
+ C N^{a \kappa}  \varepsilon_p^{- (d+1)\kappa} \int_0^t \mathbb{P}( \mathcal{A}_{\theta_{(4)}}^{(4)}(s) )  ds\\
\leq&C N^{\vartheta \beta_k }  \cdot N^{2 \tilde{\kappa}_{(1)} ( -\frac{1}{2} + \vartheta \beta_k) +1 }
  +  C N^{ \kappa ( (\vartheta-1) \beta_k +a )}  N^{2 \tilde{\kappa}_{(2)} ( ( \theta_{(2)}-\frac{1}{2}) +(\vartheta-1)  \beta_k )  +1 }\\
  &+ C    \Big( \varepsilon_p^{-d |m-3|-(2d+1)} \cdot  N^{a}\Big)^\kappa  N^{2 \tilde{\kappa}_{(3)} (  \theta_{(3)}-\frac{1}{2})   +1 }\varepsilon_p^{-2 \tilde{\kappa}_{(3)}d}
  + C N^{a \kappa}  \varepsilon_p^{- (d+1)\kappa} N^{2 \tilde{\kappa}_{(4)} (  \theta_{(4)}-\frac{1}{2})   +1 }\varepsilon_p^{-2 \tilde{\kappa}_{(4)}(d+1)}.
\end{align*}
These gives further restrictions of $\beta_k$ and $a$ and the possible choices of $\theta_{(2)} $-$\theta_{(4)}$ such that
\begin{align}
&0< \frac{1}{2} - \vartheta \beta_k,&
&0<\theta_{(2)}<\frac{1}{2}-(\vartheta-1)  \beta_k,&
&0< \theta_{(3)} < \frac{1}{2}, &0< \theta_{(4)} < \frac{1}{2}.\label{rest_thetas_and_a_keller_segel_prob}
\end{align}
With these conditions, for the above fixed $\kappa$, we can choose $\tilde{\kappa}_{(1)}$-$\tilde{\kappa}_{(4)}$ big enough such that the term on ``bad'' sets are bounded by $ C N^{-\tilde\gamma}$.

Obviously, the above restrictions imply that $\beta_k$ and $a$ have to be chosen with
\begin{equation}\label{main_condition_conv_in_prob}
(\vartheta+1)\beta_k \leq  a<\frac12.
\end{equation}
As a summary, if the condition \eqref{main_condition_conv_in_prob} holds, then we have
\begin{align*}
\mathbb{E}[S_t] \leq C\ln N \int_0^t \mathbb{E}[S_s] \, ds + C N^{-\tilde\gamma}.
\end{align*}
Consequently, Gr\"onwall's inequality yields
\begin{align*}
\sup_{0\leq t\leq T} \mathbb{E}[S_t] \leq C N^{-\tilde\gamma+C(T)} .
\end{align*}
Taking $\tilde\gamma=\gamma+C(T)$, together with Markov’s inequality, it holds, for those $a$ in \eqref{main_condition_conv_in_prob}, that
\begin{align}\label{PN}
\sup_{0\leq t\leq T} \mathbb{P} \Big( \max_{i \in\{1, \ldots , N\}} \big| X_t^{N,i, \varepsilon, \sigma} - \bar{X}_t^{i, \varepsilon, \sigma} \big| >N^{-a} \Big) \leq
C N^{-\gamma}.
\end{align}
Actually for any $\tilde{a}<a$, it is easy to see
\begin{align*}
\sup_{0\leq t\leq T} \mathbb{P} \Big( \max_{i \in\{1, \ldots , N\}} \big| X_t^{N,i, \varepsilon, \sigma} - \bar{X}_t^{i, \varepsilon, \sigma} \big|
>N^{-\tilde{a}} \Big) \leq
\sup_{0\leq t\leq T} \mathbb{P} \Big( \max_{i \in\{1, \ldots , N\}} \big| X_t^{N,i, \varepsilon, \sigma} - \bar{X}_t^{i, \varepsilon, \sigma} \big| >N^{-a} \Big) \leq
C N^{-\gamma}.
\end{align*}
Similarly, we have
\begin{align*}
\sup_{0\leq t\leq T} \mathbb{P} \Big( \max_{i \in\{1, \ldots , N\}} \big| X_t^{N,i, \varepsilon, \sigma} - \bar{X}_t^{i, \varepsilon, \sigma} \big|
>(\ln N)^{-\frac{1}{4+2d(2+|m-3|)}} \Big) \leq
C (\varepsilon_k+\varepsilon_p)^\frac{1}{2}
{\color{red} \leq C \varepsilon_p^\frac{1}{2}.}
\end{align*}
\end{proof}

\begin{proof}
	[Proof of Theorem \ref{coroll_X_N_and_X_hat} (b)] The convergence in expectation implies the convergence in the sense of probability. Therefore, from Proposition \ref{lemma_X_bar_X_hat} we obtain
	\begin{align*}
	&\sup_{0\leq t\leq T}\mathbb{P}\Big(\max_{i\in\{1,\ldots,N\}}|\bar{X}_t^{i, \varepsilon, \sigma} -\hat{X}_t^{i, \sigma}|>
    \varepsilon_p^\frac{1}{2} \Big)\\
	\leq&  \varepsilon_p^{-\frac{1}{2}}\sup_{0\leq t\leq T}\mathbb{E}\Big(\max_{i\in\{1,\ldots,N\}}|\bar{X}_t^{i, \varepsilon, \sigma}
    -\hat{X}_t^{i, \sigma}| \Big)\\
	\leq& C \varepsilon_p^{-\frac{1}{2}}(\varepsilon_k+\varepsilon_p)
	\leq C(\varepsilon_k+\varepsilon_p)^\frac{1}{2}{\color{red} \leq C \varepsilon_p^\frac{1}{2},}
	\end{align*}
	which together with the results in Proposition \ref{theorem_algebraic} gives directly the result in Theorem \ref{coroll_X_N_and_X_hat} $(b)$.
\end{proof}

\section{Propagation of chaos in the strong sense}\label{propagationstrong}
In this section, we derive the quantitative propagation of chaos results in the strong sense by the relative entropy and interpolation inequality.
The letter $C$ appeared in this section is a positive constant independent of $N$, $\varepsilon_k$ and $\varepsilon_p$.
 
\begin{theorem}[Quantitative propagation of chaos result in $L^1(\R^{dl})$-norm]
	\label{relativeL1}
	Under the assumptions of Theorem  \ref{coroll_X_N_and_X_hat} (b),
	we suppose that $u^{\varepsilon,\sigma}_{N,l}(t,x_1,\cdots,x_l)$ $(l\in\N)$ is the $l-$th marginal density of the joint density $u^{\varepsilon,\sigma}_N(t,x_1,\cdots,x_N)$ of $(X^{N,i,\varepsilon,\sigma}_{t})_{1\leq i\leq N}$,
	then there exists a positive constant $C$ independent of $N$, $\varepsilon_k$ and $\varepsilon_p$ such that
	\begin{align*}
		\|u^{\varepsilon,\sigma}_{N,l}-{u^\sigma}^{\otimes l}\|_{L^\infty(0,T;L^1(\mathbb{R}^{dl}))}\leq C(l)\varepsilon_p^{\frac{4}{d+4}}.
	\end{align*}
\end{theorem}
A similar argument can be applied to infer that this result holds for Theorem  \ref{coroll_X_N_and_X_hat} (a) as well.

\begin{proof}[Proof of Theorem \ref{relativeL1}]
The Kolmogorov forward equation of \eqref{generalized_regularized_particle_model} reads as
\begin{align}
\label{relativeuN}
\begin{cases}
&\partial_t u_N^{\varepsilon,\sigma}=\sigma\displaystyle\sum_{i=1}^N\Delta_{x_i}u^{\varepsilon,\sigma}_N
-\displaystyle\sum_{i=1}^N\nabla_{x_i}\cdot \Big(\frac{1}{N}\sum_{j=1}^N\nabla\Phi^{\varepsilon_k}_\vartheta(x_i-x_j)u^{\varepsilon,\sigma}_N \Big) \\
&\qquad\qquad+\displaystyle\sum_{i=1}^N\nabla_{x_i}\cdot\Big(\nabla p_\lambda\Big(\frac{1}{N}\sum_{j=1}^NV^{\varepsilon_p}(x_i-x_j) \Big)u^{\varepsilon,\sigma}_N \Big),     \\
&u_N^{\varepsilon,\sigma}(0,x_1,\cdots,x_N)={u_0^\sigma}^{\otimes N}=u_0^\sigma(x_1)\cdots u_0^\sigma(x_N),
\end{cases}
\end{align}
where $u_N^{\varepsilon,\sigma}(t,x_1,\cdots,x_N)$ is the joint law of $N$ particles $(X_t^{N,i,\varepsilon,\sigma})_{1\leq i\leq N}$.
For fixed $\varepsilon$ and $\sigma$, the global existence and uniqueness of a classical solution to the linear parabolic problem \eqref{relativeuN} can be obtained
as a trivial consequence of classical parabolic theory.
The key to derive a quantitative convergence result is the error estimate between $u_N^{\varepsilon,\sigma}$ and the product measure with density ${u^{\varepsilon,\sigma}}^{\otimes N}$, where $u^{\varepsilon,\sigma}$ is the solution to \eqref{generalized_equation_u_epsilon_sigma}.
It is easy to see
\begin{align}
\label{relativeuvare}
\begin{cases}
&\partial_t {u^{\varepsilon,\sigma}}^{\otimes N}=\sigma\displaystyle\sum_{i=1}^N\Delta_{x_i}{u^{\varepsilon,\sigma}}^{\otimes N}
-\displaystyle\sum_{i=1}^N\nabla_{x_i}\cdot \big(\nabla\Phi^{\varepsilon_k}_\vartheta\ast u^{\varepsilon,\sigma}(x_i){u^{\varepsilon,\sigma}}^{\otimes N} \big) \\
&\qquad\qquad+\displaystyle\sum_{i=1}^N\nabla_{x_i}\cdot\big(\nabla p_\lambda\big(V^{\varepsilon_p}\ast u^{\varepsilon,\sigma}(x_i) \big){u^{\varepsilon,\sigma}}^{\otimes N} \big),     \\
&{u^{\varepsilon,\sigma}}^{\otimes N}(0,x_1,\cdots,x_N)=u_0^\sigma(x_1)\cdots u_0^\sigma(x_N).
\end{cases}
\end{align}
For $l\in\N$ and two probability density functions $f$ and $g$ on $\R^{dl}$, let
\begin{align*}
\mathcal{H}_l(f|g)
:=\int_{\R^{dl}}f\log\frac{f}{g}\,dx_1\cdots dx_l.
\end{align*}
We define the relative entropy between $u^{\varepsilon,\sigma}_N$ and ${u^{\varepsilon,\sigma}}^{\otimes N}$ as follows
\begin{align*}
\mathcal{H}(u_N^{\varepsilon,\sigma}|{u^{\varepsilon,\sigma}}^{\otimes N}):=\frac{1}{N}\mathcal{H}_N(u_N^{\varepsilon,\sigma}|{u^{\varepsilon,\sigma}}^{\otimes N})(t)
=\frac{1}{N}\int_{\R^{dN}}u_N^{\varepsilon,\sigma}\log\frac{u_N^{\varepsilon,\sigma}}{{u^{\varepsilon,\sigma}}^{\otimes N}}\,dx_1\cdots dx_N.
\end{align*}
From \eqref{relativeuN} and \eqref{relativeuvare}, we deal with the evolution of the relative entropy as follows:
\begin{align}\label{rela1}
&\frac{d}{dt}\mathcal{H}(u_N^{\varepsilon,\sigma}|{u^{\varepsilon,\sigma}}^{\otimes N})\nonumber\\
=&\frac{\sigma}{N}\sum_{i=1}^N\int_{\R^{dN}}\Big(-\nabla_{x_i}u^{\varepsilon,\sigma}_N+\frac{u^{\varepsilon,\sigma}_N}{{u^{\varepsilon,\sigma}}^{\otimes N}}\nabla_{x_i}{u^{\varepsilon,\sigma}}^{\otimes N} \Big)
\cdot \nabla_{x_i}\log \frac{u^{\varepsilon,\sigma}_N}{{u^{\varepsilon,\sigma}}^{\otimes N}} dx_1\cdots dx_N\nonumber\\
&+\frac{1}{N}\sum_{i=1}^N\int_{\R^{dN}}\Big(\frac{1}{N}\sum_{j=1}^N\nabla\Phi^{\varepsilon_k}_\vartheta(x_i-x_j)
-\nabla\Phi^{\varepsilon_k}_\vartheta\ast u^{\varepsilon,\sigma}(x_i) \Big)u^{\varepsilon,\sigma}_N\cdot\nabla_{x_i}\log \frac{u^{\varepsilon,\sigma}_N}{{u^{\varepsilon,\sigma}}^{\otimes N}}
dx_1\cdots dx_N\nonumber\\
&-\frac{1}{N}\sum_{i=1}^N\int_{\R^{dN}}\Big(\nabla p_\lambda \Big(\frac{1}{N}\sum_{j=1}^N V^{\varepsilon_p}(x_i-x_j)\Big)
-\nabla p_\lambda(V^{\varepsilon_p}\ast u^{\varepsilon,\sigma}(x_i))  \Big)u^{\varepsilon,\sigma}_N\cdot \nabla_{x_i}\log\frac{u^{\varepsilon,\sigma}_N}{{u^{\varepsilon,\sigma}}^{\otimes N}}
dx_1\cdots dx_N\nonumber\\
\le& -\frac{\sigma}{2N}\sum_{i=1}^N\int_{\R^{dN}} u^{\varepsilon,\sigma}_N\Big|\nabla_{x_i}\log\frac{u^{\varepsilon,\sigma}_N}{{u^{\varepsilon,\sigma}}^{\otimes N}} \Big|^2 dx_1\cdots dx_N
\nonumber\\
&+\frac{C}{N}\sum_{i=1}^N\int_{\R^{dN}}\Big|\frac{1}{N}\sum_{j=1}^N\nabla\Phi^{\varepsilon_k}_\vartheta(x_i-x_j)
-\nabla\Phi^{\varepsilon_k}_\vartheta\ast u^{\varepsilon,\sigma}(x_i) \Big|^2 u^{\varepsilon,\sigma}_N dx_1\cdots dx_N\nonumber\\
&+\frac{C}{N}\sum_{i=1}^N\int_{\R^{dN}}\Big|\nabla p_\lambda\Big(\frac{1}{N}\sum_{j=1}^N V^{\varepsilon_p}(x_i-x_j) \Big)
-\nabla p_\lambda(V^{\varepsilon_p}\ast u^{\varepsilon,\sigma}(x_i)) \Big|^2 u^{\varepsilon,\sigma}_N dx_1\cdots dx_N\nonumber\\
=:& -\frac{\sigma}{2N}\sum_{i=1}^N \int_{\R^{dN}}u^{\varepsilon,\sigma}_N\Big|\nabla_{x_i}\log\frac{u^{\varepsilon,\sigma}_N}{{u^{\varepsilon,\sigma}}^{\otimes N}} \Big|^2 dx_1\cdots dx_N
+I_1+I_2.
\end{align}
The term $I_1$ can be divided into three parts
\begin{align}\label{rela2}
I_1\le& C\mathbb{E}\Big[\frac{1}{N}\sum_{i=1}^N\Big|\frac{1}{N}\sum_{j=1}^N\nabla\Phi^{\varepsilon_k}_\vartheta (X^{N,i,\varepsilon,\sigma}_t-X^{N,j,\varepsilon,\sigma}_t) 
-\frac{1}{N}\sum_{j=1}^N\nabla\Phi^{\varepsilon_k}_\vartheta(\bar{X}^{i,\varepsilon,\sigma}_t-\bar{X}^{j,\varepsilon,\sigma}_t)\Big|^2 \Big]\nonumber\\
&+C\mathbb{E}\Big[\frac{1}{N}\sum_{i=1}^N\Big|\frac{1}{N}\sum_{j=1}^N\nabla\Phi^{\varepsilon_k}_\vartheta(\bar{X}^{i,\varepsilon,\sigma}_t-\bar{X}^{j,\varepsilon,\sigma}_t)
-\nabla\Phi^{\varepsilon_k}_\vartheta\ast u^{\varepsilon,\sigma}(\bar{X}^{i,\varepsilon,\sigma}_t)\Big|^2 \Big]\nonumber\\
&+C\mathbb{E}\Big[\frac{1}{N}\sum_{i=1}^N|\nabla\Phi^{\varepsilon_k}_\vartheta\ast u^{\varepsilon,\sigma}(\bar{X}^{i,\varepsilon,\sigma}_t)
-\nabla\Phi^{\varepsilon_k}_\vartheta\ast u^{\varepsilon,\sigma}(X^{N,i,\varepsilon,\sigma}_t)|^2 \Big]\nonumber\\
=:&I_{11}+I_{12}+I_{13}.
\end{align}
It follows from Proposition \ref{theorem_algebraic} that, for an arbitrary $\gamma>0$, there exists a constant $C(\gamma)>0$ independent of $N$ such that
$\sup_{0\le t\le T}\mathbb{P}(\mathcal{A}(t))\le C(\gamma)N^{-\gamma}$, where 
\begin{align*}
\mathcal{A}(t)=\Big\{w\in \Omega:\max_{1\le i\le N}|X^{N,i,\varepsilon,\sigma}_t-\bar{X}^{i,\varepsilon,\sigma}_t|>N^{-a} \Big\}\quad{\mbox{with }} 
0<a<\frac{1}{2}.
\end{align*}
This, together with Lemma \ref{lemma_L_infty_estimates_phi_theta}, implies that
\begin{align*}
I_{11}\le&C\mathbb{E}\Big[\frac{1}{N}\sum_{i=1}^N\Big|\frac{1}{N}\sum_{j=1}^N\nabla\Phi^{\varepsilon_k}_\vartheta (X^{N,i,\varepsilon,\sigma}_t-X^{N,j,\varepsilon,\sigma}_t) 
-\frac{1}{N}\sum_{j=1}^N\nabla\Phi^{\varepsilon_k}_\vartheta(\bar{X}^{i,\varepsilon,\sigma}_t-\bar{X}^{j,\varepsilon,\sigma}_t)\Big|^2\mathbb{I}_{\mathcal{A}(t)} \Big]\\
&+C\mathbb{E}\Big[\frac{1}{N}\sum_{i=1}^N\Big|\frac{1}{N}\sum_{j=1}^N\nabla\Phi^{\varepsilon_k}_\vartheta (X^{N,i,\varepsilon,\sigma}_t-X^{N,j,\varepsilon,\sigma}_t) 
-\frac{1}{N}\sum_{j=1}^N\nabla\Phi^{\varepsilon_k}_\vartheta(\bar{X}^{i,\varepsilon,\sigma}_t-\bar{X}^{j,\varepsilon,\sigma}_t)\Big|^2\mathbb{I}_{\mathcal{A}^c(t)} \Big]\\
\le& C\|\nabla\Phi^{\varepsilon_k}_\vartheta\|^2_{L^\infty(\R^d)}\mathbb{P}(\mathcal{A}(t))
+C\|D^2\Phi^{\varepsilon_k}_\vartheta\|^2_{L^\infty(\R^d)}\mathbb{E}\Big[\max_{1\le i\le N}|X^{N,i,\varepsilon,\sigma}_t-\bar{X}^{i,\varepsilon,\sigma}_t|^2\mathbb{I}_{\mathcal{A}^c(t)} \Big]\\
\le& C\varepsilon_k^{-2(\vartheta-1)}N^{-\gamma}+C\varepsilon_k^{-2\vartheta}N^{-2a}.
\end{align*}
A similar argument proves that
\begin{align*}
I_{13}\le  C\varepsilon_k^{-2(\vartheta-1)}N^{-\gamma}+C\varepsilon_k^{-2\vartheta}N^{-2a}.
\end{align*}
We apply Lemma \ref{lemma_estim_prob_bad_sets} by choosing
\begin{align*}
\mathcal{A}_{\theta_{(5)}}^{(5)}(t) := \mathcal{A}_{\theta_{(5)}}^N(\nabla  \Phi_\vartheta^{\varepsilon_k} (\cdot),u^{\varepsilon, \sigma}(t, \cdot))\ \text{with} \ (\bar{Y}^i)_{i \in \{1, \ldots, N \}} = (\bar{X}_t^{i, \varepsilon, \sigma})_{i \in \{1, \ldots, N \}},
\end{align*}
to deduce that
\begin{align*}
I_{12}\le& C\mathbb{E}\Big[\frac{1}{N}\sum_{i=1}^N\Big|\frac{1}{N}\sum_{j=1}^N\nabla\Phi^{\varepsilon_k}_\vartheta(\bar{X}^{i,\varepsilon,\sigma}_t-\bar{X}^{j,\varepsilon,\sigma}_t)
-\nabla\Phi^{\varepsilon_k}_\vartheta\ast u^{\varepsilon,\sigma}(\bar{X}^{i,\varepsilon,\sigma}_t)\Big|^2\mathbb{I}_{\mathcal{A}_{\theta_{(5)}}^{(5)}(t)} \Big]\\
&+C\mathbb{E}\Big[\frac{1}{N}\sum_{i=1}^N\Big|\frac{1}{N}\sum_{j=1}^N\nabla\Phi^{\varepsilon_k}_\vartheta(\bar{X}^{i,\varepsilon,\sigma}_t-\bar{X}^{j,\varepsilon,\sigma}_t)
-\nabla\Phi^{\varepsilon_k}_\vartheta\ast u^{\varepsilon,\sigma}(\bar{X}^{i,\varepsilon,\sigma}_t)\Big|^2\mathbb{I}_{(\mathcal{A}_{{\theta_{(5)}}}^{(5)})^c(t)} \Big]\\
\le& C\|\nabla\Phi^{\varepsilon_k}_\vartheta\|^2_{L^\infty(\R^d)}
\mathbb{P}(\mathcal{A}_{\theta_{(5)}}^{(5)}(t))+CN^{-2\theta_{(5)}}\\
\le& C\varepsilon_k^{-2(\vartheta-1)(1+\tilde{\kappa}_{(5)})}N^{2\tilde{\kappa}_{(5)}(\theta_{(5)}-\frac{1}{2})+1}+CN^{-2\theta_{(5)}}.
\end{align*}
We combine the estimates for $I_{11}, I_{12}$ and $I_{13}$ to obtain that
\begin{align}\label{rela3}
I_1\le C\varepsilon_k^{-2(\vartheta-1)}N^{-\gamma}+C\varepsilon_k^{-2\vartheta}N^{-2a}
+C\varepsilon_k^{-2(\vartheta-1)(1+\tilde{\kappa}_{(5)})}N^{2\tilde{\kappa}_{(5)}(\theta_{(5)}-\frac{1}{2})+1}+CN^{-2\theta_{(5)}}.
\end{align}
Now we turn to the term $I_2$
\begin{align*}
I_2\le& C\mathbb{E}\Big[\frac{1}{N}\sum_{i=1}^N\Big|\Big(p'_\lambda\Big(\frac{1}{N}\sum_{j=1}^NV^{\varepsilon_p}(X^{N,i,\varepsilon,\sigma}_t-X^{N,j,\varepsilon,\sigma}_t)\Big)
-p'_\lambda\Big(\frac{1}{N}\sum_{j=1}^N V^{\varepsilon_p}(\bar{X}^{i,\varepsilon,\sigma}_t-\bar{X}^{j,\varepsilon,\sigma}_t)  \Big) \Big)\\
&\qquad\qquad\frac{1}{N}\sum_{j=1}^N\nabla V^{\varepsilon_p}(X^{N,i,\varepsilon,\sigma}_t-X^{N,j,\varepsilon,\sigma}_t)\Big|^2 \Big]\\
&+ C\mathbb{E}\Big[\frac{1}{N}\sum_{i=1}^N\Big| p'_\lambda\Big(\frac{1}{N}\sum_{j=1}^N V^{\varepsilon_p}(\bar{X}^{i,\varepsilon,\sigma}_t-\bar{X}^{j,\varepsilon,\sigma}_t) \Big)\\
&\qquad\qquad\frac{1}{N}\sum_{j=1}^N\big(\nabla V^{\varepsilon_p}(X^{N,i,\varepsilon,\sigma}_t-X^{N,j,\varepsilon,\sigma}_t)
-\nabla V^{\varepsilon_p}(\bar{X}^{i,\varepsilon,\sigma}_t-\bar{X}^{j,\varepsilon,\sigma}_t) \big)  \Big|^2\Big]\\
&+C\mathbb{E}\Big[\frac{1}{N}\sum_{i=1}^N\Big|\Big(p'_\lambda\Big(\frac{1}{N}\sum_{j=1}^NV^{\varepsilon_p}(\bar{X}^{i,\varepsilon,\sigma}_t-\bar{X}^{j,\varepsilon,\sigma}_t) \Big)
-p'_\lambda(V^{\varepsilon_p}\ast u^{\varepsilon,\sigma}(\bar{X}^{i,\varepsilon,\sigma}_t)) \Big)\\
&\qquad\qquad \frac{1}{N}\sum_{j=1}^N\nabla V^{\varepsilon_p}(\bar{X}^{i,\varepsilon,\sigma}_t-\bar{X}^{j,\varepsilon,\sigma}_t) \Big|^2 \Big]\\
&+C\mathbb{E}\Big[\frac{1}{N}\sum_{i=1}^N\Big|\big(p'_\lambda(V^{\varepsilon_p}\ast u^{\varepsilon,\sigma}(\bar{X}^{i,\varepsilon,\sigma}_t))
-p'_\lambda(V^{\varepsilon_p}\ast u^{\varepsilon,\sigma}(X^{N,i,\varepsilon,\sigma}_t)) \big)\\
&\qquad\qquad \frac{1}{N}\sum_{j=1}^N\nabla V^{\varepsilon_p}(\bar{X}^{i,\varepsilon,\sigma}_t-\bar{X}^{j,\varepsilon,\sigma}_t) \Big|^2 \Big]\\
&+C\mathbb{E}\Big[\frac{1}{N}\sum_{i=1}^N\Big|p'_\lambda(V^{\varepsilon_p}\ast u^{\varepsilon,\sigma}(X_t^{N,i,\varepsilon,\sigma}))
\Big(\frac{1}{N}\sum_{j=1}^N\nabla V^{\varepsilon_p}(\bar{X}^{i,\varepsilon,\sigma}_t-\bar{X}^{j,\varepsilon,\sigma}_t)
-\nabla V^{\varepsilon_p}\ast u^{\varepsilon,\sigma}(\bar{X}^{i,\varepsilon,\sigma}_t) \Big) \Big|^2 \Big]\\
&+C\mathbb{E}\Big[\frac{1}{N}\sum_{i=1}^N\Big|p'_\lambda(V^{\varepsilon_p}\ast u^{\varepsilon,\sigma}(X_t^{N,i,\varepsilon,\sigma}))
\Big(\nabla V^{\varepsilon_p}\ast u^{\varepsilon,\sigma}(\bar{X}^{i,\varepsilon,\sigma}_t)
-\nabla V^{\varepsilon_p}\ast u^{\varepsilon,\sigma}({X}^{N,i,\varepsilon,\sigma}_t) \Big) \Big|^2 \Big]\\
=:& \sum_{m=1}^6I_{2m}.
\end{align*}
By the same argument as that one used for the term $I_{11}$, we derive from Proposition \ref{theorem_algebraic} that
\begin{align*}
I_{21}\le& C\|p'_\lambda\|^2_{L^\infty(0,\|V^{\varepsilon_p}\|_{L^\infty(\R^d)})}\|\nabla V^{\varepsilon_p}\|^2_{L^\infty(\R^d)}\mathbb{P}(\mathcal{A}(t))\\
&+C\|\nabla V^{\varepsilon_p}\|^2_{L^\infty(\R^d)}\|p''_\lambda\|^2_{L^\infty(0,\|V^{\varepsilon_p}\|_{L^\infty(\R^d)})}\|\nabla V^{\varepsilon_p}\|^2_{L^\infty(\R^d)}
\mathbb{E}\Big[\max_{1\le i\le N}|X^{N,i,\varepsilon,\sigma}_t-\bar{X}^{i,\varepsilon,\sigma}_t|^2\mathbb{I}_{\mathcal{A}^c(t)} \Big]\\
\le& C\varepsilon_p^{-2(dm-d+1)}N^{-\gamma}+C\varepsilon_p^{-4-2d(|m-3|+2)}N^{-2a},
\end{align*}
where the assumption that $\lambda=\frac{\varepsilon_p^d}{2}$ has been used.
Similarly, we get
\begin{align*}
I_{22}+I_{24}+I_{26}
\le C\varepsilon_p^{-2(dm-d+1)}N^{-\gamma}
+C\varepsilon_p^{-2(dm-d+2)}N^{-2a}.
\end{align*}
Using Lemma \ref{lemma_estim_prob_bad_sets}, we have
\begin{align*}
I_{23}\le& C\|\nabla V^{\varepsilon_p}\|^2_{L^\infty(\R^d)}\|p''_\lambda\|^2_{L^\infty(0,\|V^{\varepsilon_p}\|_{L^\infty(\R^d)})}\\
&\qquad\mathbb{E}\Big[\max_{1\le i\le N}\Big|\frac{1}{N}\sum_{j=1}^NV^{\varepsilon_p}(\bar{X}^{i,\varepsilon,\sigma}_t-\bar{X}^{j,\varepsilon,\sigma}_t)
-V^{\varepsilon_p}\ast u^{\varepsilon,\sigma}(\bar{X}^{i,\varepsilon,\sigma}_t) \Big|^2 \Big]\\
\le& C\varepsilon_p^{-2(d+1)-2d|m-3|}\|V^{\varepsilon_p}\|^2_{L^\infty(\R^d)}
\mathbb{P}(\mathcal{A}_{\theta_{(6)}}^{(6)}(t))+C\varepsilon_p^{-2(d+1)-2d|m-3|}N^{-2\theta_{(6)}}\\
\le& C\varepsilon_p^{-2(d+1)-2d|m-3|-2d(1+\tilde{\kappa}_{(6)})}N^{2\tilde{\kappa}_{(6)}(\theta_{(6)}-\frac{1}{2})+1}
+C\varepsilon_p^{-2(d+1)-2d|m-3|}N^{-2\theta_{(6)}},
\end{align*}
where $\mathcal{A}_{\theta_{(6)}}^{(6)}(t) := \mathcal{A}_{\theta_{(6)}}^N(V^{\varepsilon_p} (\cdot),u^{\varepsilon, \sigma}(t, \cdot))$.
Due to the $L^\infty((0,T)\times\R^d)$ bound for $u^{\varepsilon,\sigma}$, we proceed similarly to infer that
\begin{align*}
I_{25}\le& C\|p'_\lambda\|^2_{L^\infty(0,\|V^{\varepsilon_p}\ast u^{\varepsilon,\sigma}\|_{L^\infty((0,T)\times\R^d)})}
\mathbb{E}\Big[\max_{1\le i\le N}\Big|\frac{1}{N}\sum_{j=1}^N\nabla V^{\varepsilon_p}(\bar{X}^{i,\varepsilon,\sigma}_t-\bar{X}^{j,\varepsilon,\sigma}_t)
-\nabla V^{\varepsilon_p}\ast u^{\varepsilon,\sigma}(\bar{X}^{i,\varepsilon,\sigma}_t) \Big|^2 \Big]\\
\le&C\|\nabla V^{\varepsilon_p}\|^2_{L^\infty(\R^d)}
\mathbb{P}(\mathcal{A}_{\theta_{(7)}}^{(7)}(t))+CN^{-2\theta_{(7)}}\\
\le& C\varepsilon_p^{-2(d+1)(1+\tilde{\kappa}_{(7)})}N^{2\tilde{\kappa}_{(7)}(\theta_{(7)}-\frac{1}{2})+1}+CN^{-2\theta_{(7)}},
\end{align*}
where $\mathcal{A}_{\theta_{(7)}}^{(7)}(t) := \mathcal{A}_{\theta_{(7)}}^N(\nabla V^{\varepsilon_p} (\cdot),u^{\varepsilon, \sigma}(t, \cdot))$.
Thus, in accordance with the estimates for $I_{21}$ to $I_{26}$, we derive that
\begin{align}\label{rela5}
I_2\le& C\varepsilon_p^{-2(dm-d+1)}N^{-\gamma}+C(\varepsilon_p^{-4-2d(|m-3|+2)}+\varepsilon_p^{-2(dm-d+2)})N^{-2a}\nonumber\\
&+ C\varepsilon_p^{-2(d+1)-2d|m-3|-2d(1+\tilde{\kappa}_{(6)})}N^{2\tilde{\kappa}_{(6)}(\theta_{(6)}-\frac{1}{2})+1}
+C\varepsilon_p^{-2(d+1)-2d|m-3|}N^{-2\theta_{(6)}}\nonumber\\
&+C\varepsilon_p^{-2(d+1)(1+\tilde{\kappa}_{(7)})}N^{2\tilde{\kappa}_{(7)}(\theta_{(7)}-\frac{1}{2})+1}+CN^{-2\theta_{(7)}}.
\end{align}
Noticing $\varepsilon_k\geq N^{-\beta_k}$ and $\varepsilon_p\geq \big( {\ln N} \big)^{-\frac{1}{2+d(2+|m-3|)}}$, we can choose $\tilde{\kappa}_{(5)}-\tilde{\kappa}_{(7)}$ big enough and $\gamma$, $a$, $\theta_{(5)}$, $\theta_{(6)}$, $\theta_{(7)}$ satisfy that
\begin{align*}
	2\beta_k(\vartheta-1)<\gamma,\quad \beta_k\vartheta<a<\frac{1}{2},\quad 0<\theta_{(5)}<\frac{1}{2}-\beta_k(\vartheta-1),\quad 0< \theta_{(6)},\theta_{(7)}<\frac{1}{2},
\end{align*} 
then insert \eqref{rela3} and \eqref{rela5} into \eqref{rela1} to obtain
\begin{align*}
	\frac{d}{dt}\mathcal{H}(u^{\varepsilon,\sigma}_N|{u^{\varepsilon,\sigma}}^{\otimes N})
	+\frac{\sigma}{2N}\sum_{i=1}^N\int_{\R^{dN}}u^{\varepsilon,\sigma}_N \Big|\nabla_{x_i}\log\frac{u^{\varepsilon,\sigma}_N}{{u^{\varepsilon,\sigma}}^{\otimes N}} \Big|^2 dx_1\cdots dx_N
	\le CN^{-2\mu},
\end{align*}
where $0<\mu<\min\{\gamma/2-\beta_k(\vartheta-1),a-\beta_k\vartheta,\theta_{(5)},\theta_{(6)},\theta_{(7)}\}$. 
This implies by using Csisz\'ar-Kullback-Pinsker inequality that for any $l\in\mathbb{N}$ it holds
\begin{align}\label{relative9}
\sup_{t\in (0,T)}\|u^{\varepsilon,\sigma}_{N,l}(t)-{u^{\varepsilon,\sigma}}^{\otimes l}(t)\|_{L^1(\mathbb{R}^{dl})}^2
&\leq \sup_{t\in (0,T)}2\mathcal{H}_l(u_{N,l}^{\varepsilon,\sigma}|{u^{\varepsilon,\sigma}}^{\otimes l})(t)
\leq \sup_{t\in (0,T)} 4l\mathcal{H}(u_{N}^{\varepsilon,\sigma}| {u^{\varepsilon,\sigma}}^{\otimes N})(t)\le CN^{-2\mu}.
\end{align}

\vskip5mm
Next we focus on the derivation of $L^1$ error estimate for the difference $u^{\varepsilon,\sigma}-u^\sigma$ under Assumption \ref{ass}. 
\begin{align*}
\int_{\R^d}|u^{\varepsilon,\sigma}-u^\sigma|dx
\le& \int_{B_R(0)}|u^{\varepsilon,\sigma}-u^\sigma|dx+ \int_{B_R(0)^c}|u^{\varepsilon,\sigma}-u^\sigma|dx\\
\le& CR^{d/2} \Big(\int_{\R^d}|u^{\varepsilon,\sigma}-u^\sigma|^2dx\Big)^{1/2}+\frac{1}{R^2}\int_{\R^d}|u^{\varepsilon,\sigma}-u^\sigma||x|^2dx\\
\le& CR^{d/2}(\varepsilon_k+\varepsilon_p)+CR^{-2}.
\end{align*}
Choosing $R^{-1}=(\varepsilon_k+\varepsilon_p)^{\frac{2}{d+4}}$, we deduce that
\begin{align*}
\|u^{\varepsilon,\sigma}-u^\sigma\|_{L^\infty(0,T;L^1(\R^d))}\le C(\varepsilon_k+\varepsilon_p)^{\frac{4}{d+4}}.
\end{align*}
Obviously, taking $\|u^{\varepsilon,\sigma}\|_{L^1(\R^{d})}=\|u^{\sigma}\|_{L^1(\R^{d})}=1$ into account, we can also obtain the estimate to the difference of the $l$ tensor products of them, namely
\begin{align*}
	\|{u^{\varepsilon,\sigma}}^{\otimes l}-{u^\sigma}^{\otimes l}\|_{L^\infty(0,T;L^1(\R^{dl}))}\le C(l)(\varepsilon_k+\varepsilon_p)^{\frac{4}{d+4}}.
\end{align*}
In accordance with \eqref{relative9}, it holds
\begin{align*}
\|u^{\varepsilon,\sigma}_{N,l}(t)-{u^{\sigma}}^{\otimes l}(t)\|_{L^\infty(0,T;L^1(\mathbb{R}^{dl}))}
&\leq CN^{-\mu}+ C(l)(\varepsilon_k+\varepsilon_p)^{\frac{4}{d+4}}\leq C(l)\varepsilon_p^{\frac{4}{d+4}}\quad {\mbox{for any }}l>0,
\end{align*}
where $\varepsilon_p\geq  (\ln N)^{-\frac{1}{2+d(2+|m-3|)}}$, finishing the proof of Theorem \ref{relativeL1}.
\end{proof}

Finally, we turn to the proof of Theorem \ref{relativeLq}.

\begin{proof}[Proof of Theorem \ref{relativeLq}]

As preparation, we investigate the $L^r$ estimate for $u^{\varepsilon,\sigma}_{N,l}$. From \eqref{relativeuN}, we obtain that the $l$-th marginal $u_{N,l}^{\varepsilon,\sigma}$ satisfies that
\begin{align}
\label{relative11}
\begin{cases}
\partial_t u_{N,l}^{\varepsilon,\sigma}(t,x_1,\cdots,x_l)=&\sigma\displaystyle\sum_{i=1}^l\Delta_{x_i}u^{\varepsilon,\sigma}_{N,l}
-\displaystyle\sum_{i=1}^l\nabla_{x_i}\cdot \Big(\int_{\R^{d(N-l)}}\frac{1}{N}\displaystyle\sum_{j=1}^N\nabla\Phi^{\varepsilon_k}_\vartheta(x_i-x_j)u^{\varepsilon,\sigma}_{N} dx_{l+1}\cdots dx_N \Big) \\
&+\displaystyle\sum_{i=1}^l\nabla_{x_i}\cdot\Big(\int_{\R^{d(N-l)}}\nabla p_\lambda\Big(\frac{1}{N}\displaystyle\sum_{j=1}^NV^{\varepsilon_p}(x_i-x_j) \Big)u^{\varepsilon,\sigma}_Ndx_{l+1}\cdots dx_N \Big),     \\
\quad u_{N,l}^{\varepsilon,\sigma}(0,x_1,\cdots,x_l)=&u_0^\sigma(x_1)\cdots u_0^\sigma(x_l).
\end{cases}
\end{align}
A multiplication of \eqref{relative11} by $r(u^{\varepsilon,\sigma}_{N,l})^{r-1}$ with $r\ge 2$ and integration over $\R^{dl}$ leads to
\begin{align*}
&\frac{d}{dt}\int_{\R^{dl}}|u^{\varepsilon,\sigma}_{N,l}|^r dx_1\cdots dx_l
+\sigma r(r-1)\displaystyle\sum_{i=1}^l\int_{\R^{dl}}(u^{\varepsilon,\sigma}_{N,l})^{r-2}|\nabla_{x_i} u^{\varepsilon,\sigma}_{N,l}|^2 dx_1\cdots dx_l\\
=& r\displaystyle\sum_{i=1}^l \int_{\R^{dl}}\Big(\int_{\R^{d(N-l)}}\frac{1}{N}\displaystyle\sum_{j=1}^N\nabla\Phi_\vartheta^{\varepsilon_k}(x_i-x_j)u^{\varepsilon,\sigma}_{N}dx_{l+1}\cdots dx_N \Big)\cdot\nabla_{x_i} (u^{\varepsilon,\sigma}_{N,l})^{r-1}dx_1\cdots dx_l\\
&-r\displaystyle\sum_{i=1}^l\int_{\R^{dl}}\Big(\int_{\R^{d(N-l)}}\nabla p_\lambda\Big(\frac{1}{N}\sum_{j=1}^NV^{\varepsilon_p}(x_i-x_j) \Big)u^{\varepsilon,\sigma}_N dx_{l+1}\cdots dx_N \Big)\cdot\nabla_{x_i}(u^{\varepsilon,\sigma}_{N,l})^{r-1} dx_1\cdots dx_l\\
\le&\frac{\sigma r(r-1)}{2}\displaystyle\sum_{i=1}^l\int_{\R^{dl}}(u^{\varepsilon,\sigma}_{N,l})^{r-2}|\nabla_{x_i} u^{\varepsilon,\sigma}_{N,l}|^2 dx_1\cdots dx_l\\
&+C\displaystyle\sum_{i=1}^l\int_{\R^{dl}}(u^{\varepsilon,\sigma}_{N,l})^{r-2}\Big(\int_{\R^{d(N-l)}}\frac{1}{N}\displaystyle\sum_{j=1}^N\nabla\Phi_\vartheta^{\varepsilon_k}(x_i-x_j)u^{\varepsilon,\sigma}_Ndx_{l+1}\cdots dx_N \Big)^2 dx_1\cdots dx_l\\
&+C\displaystyle\sum_{i=1}^l\int_{\R^{dl}}(u^{\varepsilon,\sigma}_{N,l})^{r-2}\Big(\int_{\R^{d(N-l)}}\nabla p_\lambda\Big(\frac{1}{N}\sum_{j=1}^NV^{\varepsilon_p}(x_i-x_j) \Big)u^{\varepsilon,\sigma}_{N}dx_{l+1}\cdots dx_N\Big)^2dx_1\cdots dx_l.
\end{align*}
H\"older's inequality gives
\begin{align*}
&\frac{d}{dt}\int_{\R^{dl}}|u^{\varepsilon,\sigma}_{N,l}|^r dx_1\cdots dx_l
+\frac{\sigma r(r-1)}{2}\displaystyle\sum_{i=1}^l\int_{\R^{dl}}(u^{\varepsilon,\sigma}_{N,l})^{r-2}|\nabla_{x_i} u^{\varepsilon,\sigma}_{N,l}|^2 dx_1\cdots dx_l\\
\le&C\|\nabla\Phi_\vartheta^{\varepsilon_k}\|^2_{L^\infty(\R^{d})}\int_{\R^{dl}}(u^{\varepsilon,\sigma}_{N,l})^{r-2}\Big(\int_{\R^{d(N-l)}}u^{\varepsilon,\sigma}_{N}dx_{l+1}\cdots dx_N \Big)^2 dx_1\cdots dx_l\\
&+C\|p'_\lambda\|^2_{L^\infty(0,\|V^{\varepsilon_p}\|_{L^\infty(\R^d)})}\|\nabla V^{\varepsilon_p}\|^2_{L^\infty(\R^d)}
\int_{\R^{dl}}(u^{\varepsilon,\sigma}_{N,l})^{r-2}\Big(\int_{\R^{d(N-l)}}u^{\varepsilon,\sigma}_{N}dx_{l+1}\cdots dx_N \Big)^2 dx_1\cdots dx_l\\
\le& C(\varepsilon_k^{-2(\vartheta-1)}+\varepsilon_p^{-2(dm-d+1)})\int_{\R^{dl}}|u^{\varepsilon,\sigma}_{N,1}|^r dx_1\cdots dx_l.
\end{align*}
In view of $u_0^\sigma\in L^r(\R^d)$ and Gronwall's inequality, we achieve that
\begin{align*}
\sup_{t\in (0,T)}\int_{\R^{dl}}|u^{\varepsilon,\sigma}_{N,l}|^r dx_1\cdots dx_l
\le& \hat{C}(r) \exp\{\hat{C}(r)T(\varepsilon_k^{-2(\vartheta-1)}+\varepsilon_p^{-2(dm-d+1)})\},
\end{align*}
where $\hat{C}$ is a positive constant independent of $N$, $\varepsilon_k$ and $\varepsilon_p$.
Next, by a proper choice of $\varepsilon_k$ and $\varepsilon_p$, we aim to control the right-hand side of the above inequality by a positive power of $N$. 
Without loss of generality, we take this power to be $1/4$.
Since $3(dm-d+1)\ge 2+d(2+|m-3|)$, we achieve that $(\frac{1}{8\hat{C}T}\ln N)^{\frac{1}{3(dm-d+1)}}\le (\ln N)^{\frac{1}{2+d(2+|m-3|)}}$,
which ensures us to take $\varepsilon_p\geq (\frac{1}{8\hat{C}T}\ln N)^{-\frac{1}{3(dm-d+1)}}\ge (\ln N)^{-\frac{1}{2+d(2+|m-3|)}}$.
Let $\varepsilon_k\geq(\frac{1}{8\hat{C}T}\ln N)^{-\frac{1}{2(\vartheta-1)}}\ge N^{-\beta_k}$. 
Therefore,
\begin{align}\label{relative12}
\sup_{t\in (0,T)}\int_{\R^{dl}}|u^{\varepsilon,\sigma}_{N,l}|^r dx_1\cdots dx_l
\le \hat{C} \exp\{\hat{C}T(\varepsilon_k^{-2(\vartheta-1)}+\varepsilon_p^{-3(dm-d+1)})\}\le CN^{1/4}.
\end{align}
The estimates \eqref{relative9}, \eqref{relative12} and interpolation yield, for any $1<q<\infty$,
\begin{align}\label{relative13}
\|u^{\varepsilon,\sigma}_{N,l}-{u^{\varepsilon,\sigma}}^{\otimes l}\|_{L^q(\R^{dl})}
\le& C\|u^{\varepsilon,\sigma}_{N,l}-{u^{\varepsilon,\sigma}}^{\otimes l}\|_{L^{2q}(\R^{dl})}^{\frac{2(q-1)}{2q-1}}
\|u^{\varepsilon,\sigma}_{N,l}-{u^{\varepsilon,\sigma}}^{\otimes l}\|_{L^1(\R^{dl})}^{\frac{1}{2q-1}}\nonumber\\
\le& CN^{\frac{1}{8q}\frac{2(q-1)}{2q-1}}N^{\frac{-\mu}{2q-1}}
=CN^{\frac{1}{2q-1}(\frac{q-1}{4q}-\mu)},
\end{align} 
where $\frac{q-1}{4q}<\frac{1}{4}<\mu$.

By Assumption \ref{ass}, $\|u^{\varepsilon,\sigma}-u^{\sigma}\|_{L^\infty(0,T;L^2\cap L^\infty(\R^d))}\le C(\varepsilon_k+\varepsilon_p)$ and thus, by interpolation, achieving
\begin{align*}
\|u^{\varepsilon,\sigma}-u^\sigma\|_{L^\infty(0,T;L^q(\R^d))}\le C(\varepsilon_k+\varepsilon_p)\quad{\mbox{for any }}2\le q\le\infty.
\end{align*}
Furthermore, it follows from the $L^\infty(0,T;L^1(\R^d))$ bound for $u^{\varepsilon,\sigma}$ and $u^\sigma$ that, for any $1<q<2$,
\begin{align*}
\|u^{\varepsilon,\sigma}-u^\sigma\|_{L^\infty(0,T;L^q(\R^d))}
\le \|u^{\varepsilon,\sigma}-u^\sigma\|_{L^\infty(0,T;L^1(\R^d))}^{\frac{1}{q}}\|u^{\varepsilon,\sigma}-u^\sigma\|_{L^\infty((0,T)\times\R^d)}^{1-\frac{1}{q}}
\le C(\varepsilon_k+\varepsilon_p)^{1-\frac{1}{q}}.
\end{align*}
We combine this result and \eqref{relative13} to complete the proof of Theorem \ref{relativeLq}.
\begin{remark}
In \eqref{relative13}, the condition $\frac{q-1}{4q}<\frac{1}{4}<\mu$ follows from the choices $\frac{1}{4}+\beta_k\vartheta<a<\frac{1}{2}$, $\frac{1}{4}<\theta_{(5)}<\frac{1}{2}-\beta_k(\vartheta-1)$ and $\frac{1}{4}< \theta_{(6)},\theta_{(7)}<\frac{1}{2}$
subject to $\beta_k<\frac{1}{4\vartheta}$.
Since $\varepsilon_p\geq (\frac{1}{8\hat{C}T}\ln N)^{-\frac{1}{3(dm-d+1)}}$
and $\varepsilon_k\geq(\frac{1}{8\hat{C}T}\ln N)^{-\frac{1}{2(\vartheta-1)}}$,
the additional condition $\beta_k<\frac{1}{4\vartheta}$ is no longer needed for this scaling.
\end{remark}
\end{proof}

\medskip
\indent
{\bf Acknowledgements:}
The authors would like to thank Paul Nikolaev for pointing out the mistake during the preparation of this paper. L. Chen is partially supported by Deutsche Forschungsgemeinschaft (DFG, German Research Foundation, Grant No. 547277619) and the National Natural Foundation of China ( Grant No. 12171218).
Y. Li is supported by NSFC (Grant No. 12501268), Taishan Scholars Foundation of Shandong Province (Grant No. tsqnz20250715) and the Shandong Provincial Natural Science Foundation (Grant No. ZR2025QC1508).

\end{document}